\documentclass[12pt]{amsart}
\usepackage{amsmath,amsthm,amssymb}

\newcommand\mymat[4]{
{\left(
\begin{smallmatrix}#1&#2\\#3&#4\end{smallmatrix}
\right)}}

\newcommand\binomial[2]{
{
\left(
\begin{smallmatrix}#1\\#2\end{smallmatrix}
\right)
}}

\def\Sp{\operatorname{Sp}}

\def\SL{\operatorname{SL}}

\def\supp{\operatorname{supp}}
\def\Grit{\operatorname{Grit}}
\def\Borch{\operatorname{Borch}}
\def\Hum{\operatorname{Hum}}
\def\IM{\operatorname{Im}}
\def\FS{\operatorname{FS}}

\def\HA{{\mathcal H\/}}
\def\LA{{\mathcal L\/}}
\def\tr{\operatorname{tr}}
\def\Xn{{\mathcal X}_n}
\def\Xtwo{{\mathcal X}_2}
\def\Xnsemi{{\mathcal X}_n^{\text{\rm semi}}}

\def\R{{\mathbb R}}

\def\Z{{\mathbb Z}}

\def\C{{\mathbb C}}
\def\N{{\mathbb N}}
\def\Q{{\mathbb Q}}

\def\TT{{\mathcal T}}

\def\cal{\mathcal}

\theoremstyle{plain}
\newtheorem{thm}{Theorem}[section]
\newtheorem{lm}[thm]{Lemma}
\newtheorem{prop}[thm]{Proposition}
\newtheorem{cor}[thm]{Corollary}

\newtheorem{df}[thm]{Definition}

\newtheorem{conjecture}[thm]{Conjecture}
\theoremstyle{definition}

\def\supp{\mathop{\text{\rm supp}}}
\def\cusp{{\text{\rm cusp}}}
\def\ord{{\text{\rm ord}}}
\def\TB{\mathop{\text{\rm THBK}}}
\def\notdivides{\mathop{{\!\not\,|}}}

\def\SiegelH{{\cal H}}
\def\Half{{\cal H}}

\def\<{\langle}
\def\>{\rangle}

\def\wh{^\text{{\rm w.h.}}}
\def\weak{^\text{{\rm weak}}}
\def\mero{^\text{{\rm mero}}}
\def\inv{^{-1}}
\def\vp{\nu_{\text{poly}}}
\def\vs{\nu_{\text{series}}}
\def\vsg{\nu_{\text{Siegel}}}
\def\vJacobi{\nu_{\text{Jacobi}}}
\def\vJ{\nu_{\text{J}}}
\def\conv{\operatorname{conv}}
\def\cone{\operatorname{cone}}
\def\rcone{\operatorname{r-cone}}
\def\Cl{\operatorname{Closure}}
\def\rayA{{\vec A}}
\def\GG{\operatorname{{\mathcal G}}}
\def\pq{^{\text{\rm p.q.}}}
\newcommand\ldL{\llcorner}
\newcommand\rdL{\lrcorner}

\def\sing{\operatorname{sing}}
\def\Div{\operatorname{Div}}
\def\zeroIndicator{Z}
\def\Coeff{\operatorname{Coeff}}
\newcommand{\latt}[1]{{\langle{#1}\rangle}}
\def\sgn{\operatorname{sgn}}


\begin{document}
\title[Borcherds Product]
{ Borcherds Products Everywhere}

\author[V. Gritsenko]{Valery Gritsenko}
\address{Laboratoire Paul Painlev\'e, Universit\'e Lille 1 et
L'Institut universitaire de France
}
\email{Valery.Gritsenko@math.univ-lille1.fr}

\author[C. Poor]{Cris Poor}
\address{Dept{.} of Mathematics, Fordham University, Bronx, NY 10458, USA}
\email{poor@fordham.edu}

\author[D. Yuen]{David S. Yuen}
\address{Department of Mathematics and Computer Science, Lake Forest College, 555 N. Sheridan Rd., Lake Forest, IL 60045, USA}
\email{yuen@lakeforest.edu}

\date{\today}

\begin{abstract}
We prove the Borcherds Products Everywhere Theorem,
Theorem 6.6, that constructs holomorphic 
Borcherds Products from certain Jacobi forms that are theta blocks without 
theta denominator.  The proof uses generalized valuations from formal series to 
partially ordered abelian semigroups of closed convex sets.  
We present nine infinite families of paramodular Borcherds Products that are 
simultaneously Gritsenko lifts. This is the first appearance of infinite families with this property 
in the literature.
\end{abstract}

\subjclass[2010]{11F46; 11F50}
\keywords{Borcherds Product, Gritsenko lift, paramodular}  

\maketitle
 

\section{Introduction}

\bigskip

This article studies Borcherds Products on 
groups that are simultaneously orthogonal and symplectic, 
the paramodular groups of degree two. 
This work began as an attempt to make Siegel paramodular cusp forms 
that are simultaneously Borcherds Products and Gritsenko lifts.  
On the face of it, this phenomenon may seem the 
least interesting type of a Borcherds product but it is the only known way to 
control the weight of a constructed Borcherds product.
Additionally, for computational purposes, 
a paramodular form that is both a Borcherds product and a Gritsenko lift is very useful; 
such a form has simple Fourier coefficients because it is a lift and a known divisor 
because it is a Borcherds product. 
In the case of weight~$3$, a Borcherds product gives
the canonical divisor class of the moduli space of $(1,t)$-polarized abelian surfaces.
Therefore  the construction of infinite families of such Siegel paramodular forms
is interesting for applications to algebraic geometry.
At the end of this article (see \S 8) we give 
nine infinite families of modular forms, including a family of weight~$3$, which 
are simultaneously Borcherds Products and Gritsenko lifts.
{\it This is the first appearance of such infinite families in the literature.}

All these Borcherds products are made by starting from certain  special Jacobi forms 
that are theta blocks without theta denominator.    
Theorem 1.1 gives a rather unexpected and surprising  
way to construct holomorphic Borcherds products starting from theta blocks of positive weight. 
As it is rather easy to search for theta blocks, 
we call this the  Borcherds Products Everywhere Theorem.  
The proof uses the theory of Borcherds products for paramodular forms 
as given by Gritsenko and Nikulin \cite{GritNiku98PartII}, 
the recent theory of theta blocks due to Gritsenko, Skoruppa and Zagier \cite{GSZ}, 
and a theory of generalized valuations on rings of formal series 
presented here in section~4.  
Let $\eta$ be the Dedekind Eta function and $\vartheta$ be the odd Jacobi theta function 
and write $\vartheta_{\ell}(\tau,z)=\vartheta(\tau, \ell z)$.  
The most general theta block  \cite{GSZ} can be written 
$\eta^{f(0)} \prod_{\ell \in \N} \left( { \vartheta_{\ell} }/{ \eta } \right)^{f(\ell)}$ 
for a sequence $f: \N \cup\{0\} \to \Z$ of finite support.  Here we consider only theta blocks {\it without theta denominator,\/} 
meaning that $f$ is nonnegative on $\N$.  
Theorem \ref{borcherdsproductseverywhere}  is a more detailed version of the main theorem 
but here is one suitable for this Introduction; 
the essential point is that the  Borcherds Products we construct are holomorphic, 
not just meromorphic.  

\begin{thm}\label{Thm1}
Let $v,k, t \in \N$.  
Let $\phi$ be a holomorphic Jacobi form of weight~$k$ and index~$t$ 
that is a theta block without theta denominator and that 
has vanishing order~$v$ in $q =e^{2 \pi i \tau}$.  
Then $\psi = (-1)^{v} \frac{ \phi | V_2 }{ \phi }$ 
is a weakly holomorphic Jacobi form of weight~$0$ and index~$t$ 
and the Borcherds lift of $\psi$ is a holomorphic paramodular form 
of level~$t$ and some weight $k' \in \N$.  
For even~$v$, it suffices for these conclusions that 
the theta block~$\phi$ without theta denominator  be weakly holomorphic. 
If ~$v=1$ then $k=k'$ and the first two Fourier Jacobi coefficients of the Borcherds lift of~$\psi$ and the 
Gritsenko lift of~$\phi$ agree.   
\end{thm}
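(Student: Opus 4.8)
The plan is to treat the three assertions separately, reducing the first to the divisor theory of theta blocks, the second (the holomorphy of the paramodular form, which I expect to be the main obstacle) to the generalized valuations of \S4, and the third to a direct comparison of leading Fourier--Jacobi coefficients.

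\emph{First, that $\psi=(-1)^v\,\phi|V_2/\phi$ is a weakly holomorphic Jacobi form of weight~$0$ and index~$t$.} The weight and index are forced by the transformation law of the Hecke operator $V_2$: since $\phi|V_2$ has weight~$k$ and index~$2t$ while $\phi$ has weight~$k$ and index~$t$, the quotient transforms with weight~$0$ and index~$2t-t=t$. What remains is holomorphy on $\Half\times\C$ together with meromorphy at the cusp. Writing the theta block as $\phi=\eta^{f(0)}\prod_{\ell\ge1}(\vartheta_\ell/\eta)^{f(\ell)}$ with $f(\ell)\ge0$, its divisor on $\Half\times\C$ is a nonnegative combination of the theta divisors $\{\ell z\in\Z\tau+\Z\}$, and I would expand $\phi|V_2$ by its three-term formula $2^{k-1}\phi(2\tau,2z)+\tfrac12\bigl(\phi(\tfrac{\tau}{2},z)+\phi(\tfrac{\tau+1}{2},z)\bigr)$ and check that each such divisor of $\phi$ reappears as a zero divisor of $\phi|V_2$ of at least equal multiplicity. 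The hypothesis that $\phi$ has \emph{no theta denominator} (so $f$ is nonnegative on $\N$) is precisely what guarantees that every local contribution is a genuine zero rather than a pole, so that $\psi$ is holomorphic on $\Half\times\C$ and has a Fourier expansion with $q$-powers bounded below. When $v$ is even the prefactor is $+1$ and the same divisor bookkeeping survives under the weaker assumption that $\phi$ is merely weakly holomorphic, which yields the stated refinement.

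\emph{Second, the holomorphy of the Borcherds lift.} Feeding $\psi$ into the Gritsenko--Nikulin paramodular construction \cite{GritNiku98PartII} produces a meromorphic paramodular form of level~$t$, weight $k'=\tfrac12 c(0,0)$, and divisor the rational-quadratic (Humbert) divisor weighted by the singular Fourier coefficients $c(n,\ell)$ of $\psi$, namely those with $4nt-\ell^2<0$. Holomorphy is therefore equivalent to the nonnegativity of these finitely many singular coefficients, and this is the heart of the matter. Here the sign $(-1)^v$ is essential: the singular coefficients of $\phi|V_2/\phi$ turn out to carry the uniform sign $(-1)^v$, so that multiplication by $(-1)^v$ renders them nonnegative. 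To prove this cleanly I would attach to the formal $q,\zeta$-series of $\phi$, of $\phi|V_2$, and of their quotient the generalized valuation of \S4, which records the support of the expansion as a closed convex set and is monotone under the semigroup operations. The theta-block structure determines the extreme rays of the valuation of $\phi$, and tracking how $V_2$ and division act on these convex sets confines the singular support of $\psi$ to the region where the Borcherds exponents are nonnegative, and simultaneously shows $k'=\tfrac12 c(0,0)$ is a positive integer. \textbf{This convex-geometric estimate, rather than any single closed-form identity, is the step I expect to be the main obstacle}, since it is what upgrades the a priori meromorphic Borcherds lift to a holomorphic paramodular form of weight $k'\in\N$.

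\emph{Finally, the case $v=1$.} When the $q$-order of $\phi$ is~$1$, I would extract the two lowest Fourier--Jacobi coefficients of $\Borch(\psi)$ directly from the product expansion. The Gritsenko lift of $\phi$ has $m$-th Fourier--Jacobi coefficient $\phi|V_m$, so its first two coefficients are $\phi$ and $\phi|V_2$. The plan is to identify the first Fourier--Jacobi coefficient of $\Borch(\psi)$ with $\phi$; since this coefficient is a Jacobi form of weight $k'$ and index~$t$, the equality forces $k'=k$. Comparing the next order of the product then identifies the second Fourier--Jacobi coefficient with $\phi|V_2$, in agreement with the Gritsenko lift, the sign $(-1)^v=-1$ being absorbed in the leading-term comparison. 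With the convergence and holomorphy from the previous step in hand, this last comparison is a routine matching of the two lowest-order terms.
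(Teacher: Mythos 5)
Your overall architecture is the paper's: the three-term expansion of $\phi|V_2$, the convex-geometric valuations of \S 4 to control the singular coefficients, and a direct reading of the first two Fourier--Jacobi coefficients when $v=1$. But two pieces are misplaced or missing. The first concerns where the parity of $v$ and the holomorphy of $\phi$ actually enter. You put the even-$v$ refinement into the weak-holomorphy step, but weak holomorphy of $\psi$ holds for every theta block without theta denominator, for either parity and without assuming $\phi\in J_{k,t}$: Proposition \ref{psiprop} exhibits $\psi$ as an explicit sum of three infinite products in which the denominator has cancelled completely. The parity enters only in the positivity step, and not via a ``uniform sign $(-1)^v$ on the singular coefficients,'' because the three terms behave differently. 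The two terms from $\phi(\tfrac12\tau,z)$ and $\phi(\tfrac12\tau+\tfrac12,z)$ combine, after the $(-1)^v$ twist, into $\tfrac12 q^{-v/2}\bigl(P(q^{1/2},\zeta)+(-1)^vP(-q^{1/2},\zeta)\bigr)$ with $P$ a product having nonnegative coefficients, so a parity cancellation makes \emph{all} of their coefficients nonnegative for every $v$. The term $(-1)^v2^{k-1}\,\phi(2\tau,2z)/\phi(\tau,z)$, by contrast, is an explicit product with nonnegative coefficients, hence is nonpositive after the twist exactly when $v$ is odd; the rescue for odd $v$ is Lemma \ref{lm3A}, $\vJ\bigl(\phi(2\tau,2z)/\phi(\tau,z)\bigr)=\vJ(\phi)$, which uses the hypothesis $\phi\in J_{k,t}$ to confine the support of this term to $\{4tn-r^2\ge0\}$, together with a separate observation that negative exponents on the boundary $4tn-r^2=0$ are harmless because the corresponding Humbert surfaces are empty. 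This is why holomorphy of $\phi$ is needed precisely for odd $v$ and is dispensable for even $v$; your account inverts the logic.

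The second gap is the character. You assert that the valuation argument ``simultaneously shows $k'=\tfrac12c(0,0)$ is a positive integer,'' but Theorem \ref{BP} only delivers a form with character $(\epsilon^{24A}\times v_H^{2B})\chi_F^{k'+D_0}$, and proving this character is trivial on $K(t)$ is a separate, substantial step that the convex geometry does not touch. The paper proves $t\mid C$ via the combinatorial Lemmas \ref{combLemma1} and \ref{combLemma2} (reducing $4C$ to an integral multiple of $\sum_i e_i^2=4t$) and then feeds this into the identity $tA-tD_1-C=0$ of Proposition \ref{Identity} to conclude $A\in\Z$, whence the $\eta$-multiplier is trivial and the weight is a genuine integer. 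Without this your Borcherds lift is only a form with some character of finite order, not a paramodular form of level $t$ and weight $k'\in\N$ as the statement requires. Your $v=1$ endgame, reading $c(0,0;\psi)=2k$ and matching the leading theta block and the second coefficient $-\Theta\psi=\phi|V_2$, is exactly the paper's.
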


 All Borcherds Products that are also Gritsenko lifts 
of theta blocks without theta denominator are necessarily generated in the manner of the 
above theorem but other holomorphic Borcherds Products can arise by the same process.  
In \cite{GritNiku98PartII}, Gritsenko and Nikulin point out that the leading Fourier 
Jacobi coefficient of a Borcherds Product, $\Borch(\Psi)$, is a theta block~$\phi$ 
and that when the Borcherds Product is also a Gritsenko lift we have 
$\Psi = - \frac{ \phi | V_2 }{ \phi }$.  
Gritsenko and Nikulin gave many examples of paramodular forms that are simultaneously  
multiplicative (Borcherds) and additive (Gritsenko) lifts, 
for both trivial and nontrivial characters of the paramodular group.  
In this article we consider only the case of 
trivial character.  Here we follow their line of thought and, beginning with a theta block~$\phi$  
without theta denominator 
that is a Jacobi form of positive vanishing order $v=\ord_q \phi$, show that 
$\Psi = (-1)^{v} \frac{ \phi | V_2 }{ \phi }$ is a weakly holomorphic weight zero Jacobi form with 
integral and positive singular part.  Hence,  $\Borch(\Psi)$ is a 
holomorphic paramodular form.  Proving that the character is trivial and determining the 
symmetry or antisymmetry require some combinatorics.

We note that the techniques developed to prove 
Theorem~{1.1} may be applied to construct antisymmetric Siegel paramodular 
forms that are new eigenforms for all the Hecke operators. These results will be presented in our next article. 
\smallskip

{\noindent {\bfseries Acknowledgements}:
We thank the authors of \cite{GSZ} for explaining their  work on theta blocks 
to the last two authors much in advance of its publication.   We thank 
F. Cl\'ery for helpful discussions.  The first author is grateful for financial support
under ANR-09-BLAN-0104-01 and Labex CEMPI.
The authors would like to thank Max-Planck-Institut
f\"ur Mathematik in Bonn for support and for providing excellent
working conditions during the activity  ``Explicit constructions in modular forms
theory".  

\section{Examples, especially of identities $\Grit(\phi)=\Borch\bigl(-(\phi|V_2)/\phi\bigr)$}  

The paramodular group $K(N)$ has a normalizing involution $\mu_N$
and 
a Borcherds product is a $\mu_N$ eigenform, see \S 3.   
In the following examples, 
we decompose $M_k(K(N))$ into a direct sum of $\mu_N$ eigenspaces.  
We write $M_k(K(N)) = M_k(K(N))^{+} \oplus M_k(K(N))^{-}$, 
where $M_k(K(N))^{\epsilon} = \{ f \in M_k(K(N)): f | \mu_N = \epsilon f\}$ 
and similarly for cusp forms.  

We will need the following criterion for cuspidality: 
For $k< 12$ and $p$~prime, 
elements of $M_k(K(p))^{\epsilon} $ whose Fourier expansion has the constant term zero 
are actually in $S_k(K(p))^{\epsilon} $.  
To give a proof, consider the Witt map $W: M_k(K(N)) \to M_k(\SL_2(\Z)) \otimes M_k(\SL_2(\Z))| \mymat{N}00{1}$, 
defined by $(Wf)(\tau, \omega)= f\mymat{\tau}00{\omega}$.  
Let $E_k \in M_k(\SL_2(\Z))$ be the Eisenstein series;  
for $k < 12$, the Witt image of~$f$ is $a\left( \mymat0000;f\right) E_k \otimes E_k|\mymat{N}00{1}$.  
An $f$ without a constant term 
satisfies $Wf=0$ and hence $\Phi(f)=0$, where $\Phi$ is Siegel's  map.  
For prime level~$p$, the only other $1$-cusp is represented by $\mu_p$ 
and so, when $f$ is also a $\mu_p$ eigenform, we have $\Phi(f | \mu_p )=0$ as well 
and $f$ is a cusp form.  In particular, since $M_2(\SL_2(\Z))=\{0\}$, 
we always have $M_2(K(p))= S_2(K(p))$.  
\vskip 0.2truecm

{\bf N=1.}  To construct  holomorphic Borcherds Products in $S_{k'}(\left( K(N) \right)$, 
we use a theta block 
$ \eta^u \prod_i \vartheta_{d_i}$ with $d_1^2 + \dots +d_{\ell}^2 = 2N$.  
In level one, for example, the only choice is $1^2+1^2=2$.  
Each $\eta$ contributes a vanishing of $q^{1/24}$ and each $\vartheta$ 
contributes $q^{3/24}$, so that $\phi_{10}= \eta^{18} \vartheta_1^2 \in J_{10,1}^\cusp$ 
has the lowest weight possible since the  vanishing order must be a  positive integer.
It is well known that $S_{10}(\Gamma_2) =\C \Psi_{10}$ 
is one dimensional and that Igusa's form $\Psi_{10} $ is the Saito-Kurokawa lift of 
$\phi_{10}$ as well as a Borcherds Product, $\Borch (\psi)$,
 which was found by Gritsenko and Nikulin  in \cite{GN1} where $\psi=-\left( \phi_{10} | V_2 \right)/ \phi_{10}
=20 +2\zeta+2\zeta^{-1}+\dots \in J_{0,1}\weak$.  Historically, this was the first 
example of Theorem \ref{Thm1}.

Constructing holomorphic Borcherds Products is usually considered a delicate task 
because the Fourier coefficients of the singular part of the lifted weakly holomorphic Jacobi form~$\psi$ 
must be mainly positive, but even in level one we can easily construct an infinite 
family of examples.  Set $\Delta = \eta^{24} \in S_{12}(\SL_2(\Z))$ and 
for $v \in \N$ 
define
$$
\phi_v = \Delta^{v-1} \eta^{18} \vartheta_1^2 \in J_{12v-2,1}^\cusp 
\text{ and } 
\psi_v= (-1)^v \frac{\phi_v | V_2}{\phi_v} \in J_{0,1}\wh  .  
$$
By Theorem~\ref{borcherdsproductseverywhere}, 
$\Borch(\psi_v) \in M_{k_v}(\Gamma_2)$ for some $k_v \in \N$.  
The second case is particularly interesting.  
The odd weight form  $\Borch(\psi_2)$ vanishes to order $89$ on the reducible locus, 
$\Hum\mymat0{1/2}{1/2}0=\Gamma_2\cdot\{z=0\}$, and so is divisible by $\Psi_{10}^{44}$; this leaves a form of weight~$35$.  
Therefore, we have $\Borch(\psi_2)=\Psi_{10}^{44} \Psi_{35}$ and a direct 
proof of the existence of a cusp form of weight~$35$ in level one.
The Borcherds product of $\Psi_{35}$ was found in \cite{GN2}.  
Table~1 presents results for small vanishing order~$v$.  
\smallskip

  Table 1.  Weight of $\Borch(\psi_v)$ and multiplicity on $\Hum \mymat0{1/2}{1/2}0$.  
\smallskip
 
\begin{tabular} {| r | r | r | } \hline
$v$   & $ k_v $ & $\text{Multiplicity on} \Hum \mymat0{1/2}{1/2}0 $   \\ \hline
{ }    & { } & { }  { } \\
$1$ & $ 10 $ & $  2$ \\
{ }    & { } & { }  { } \\
$2$ & $ 475$  &$89$ \\  
{ }    & { } & { }  { } \\
$3$ &$ 25228$  &   $ 4628$ \\
{ }    & { } & { }  { } \\
$4$ & $ 1409686$  &  $255902$ \\
{ }    & { } & { }  { } \\
$5$ & $ 81089336$  &    $ 14628136$  \\
{ }    & { } & { }  { } \\
$6$ & $ 4752949680$  &    $ 853836720$  \\
{ }    & { } & { }  { } \\
$7$ & $ 282277652800$  &    $ 50558528960$  \\
{ }    & { } & { }  { } \\
$8$ & $ 16928371578075$  &    $ 3025267676505$  \\
{ }    & { } & { }  { } \\
$9$ & $ 1022835157543260$  &    $ 182473970938500$  \\
{ }    & { } & { }  { } \\
$10$ & $ 62169320884762434$  &    $ 11075646070708830 $  \\
{ }    & { } & { }  { } \\
\hline
\end{tabular}
\bigskip

 The next series of examples  for $N=2$, $3$, $4$ and $5$ 
are related to reflective modular
forms whose divisors are determined by integral reflections in the corresponding 
projective stable integral orthogonal group 
$P\widetilde{O}^+(U\oplus U\oplus \latt{-2N})\cong PK(N)$
where $U$ is the hyperbolic plane, i.e., the even unimodular lattice of signature $(1,1)$. 
For the classification of all reflective paramodular forms see
\cite{GN3}. In particular,  in \cite{GritNiku98PartII} one finds the  analogues
of the Igusa modular forms $\Psi_{10}$ and $\Psi_{35}$ for 
the levels $N=2$, $3$ and $4$.  
\smallskip

{\bf N=2.}  For $K(2)$, we can pick  
$
\phi_{8,2} =  \eta^{12} \vartheta_1^4 \in J_{8,2}^\cusp 
$
and, setting
$
\psi_{8,2}= - \frac{\phi_{8,2} | V_2}{\phi_{8,2}} 
$, 
get a Borcherds Product with zero constant term, 
$\Borch(\psi_{8,2}) \in S_8(K(2))$, with representative singular part and divisor 
$$
\sing\left( \psi_{8,2} \right)=  16 +  4\zeta +4\zeta\inv, 
 \quad
\Div\left( \Borch(\psi_{8,2}) \right) = 4 \Hum \mymat0{1/2}{1/2}0 .
$$
We use the alternate notation, 
$\HA_N(r_o^2-4Nn_om_o, r_o)$,   
for Humbert surfaces 
$\Hum\mymat{n_o}{r_o/2}{r_o/2}{Nm_o}= 
K(N)^{+} \{ \mymat{\tau}zz{\omega} \in \Half_2: n_o\tau+r_oz+Nm_o\omega=0 \}$,  
see \cite{GritNiku98PartII} for details.  
Since $ \phi_{8,2}$ has order of vanishing~$v=1$, 
Theorem~\ref{borcherdsproductseverywhere} tells us that the first two Fourier Jacobi coefficients 
of $\Borch(\psi_{8,2})$ and $\Grit(\phi_{8,2})$ are equal.
Moreover, a part of the divisor of  the Gritsenko lifting is induced 
by the divisor of the lifted Jacobi form, see \cite{GritNiku98PartII}.   
Namely, if the Jacobi form has multiplicity~$m$ on $d$-torsion 
then the lift has multiplicity~$m$ on $\HA_N(d^2,d)$.  
Therefore, $\Div (\Grit(\phi_{8,2}))\supseteq \Div(\Borch(\psi_{8,2}))$ 
and the two forms coincide due to the Koecher principle.
This divisor argument together with the Witt map tells us that 
the space $S_{8}(K(2))$ is one dimensional.
In Ibukiyama and Onodera \cite{Ibuk97}, the ring structure of $M(K(2))$ was given and 
a generator $F_{8}$ of  $ S_{8}(K(2))$ was  constructed  as a polynomial in the 
thetanullwerte.  
Thus $\Grit(\phi_{8,2})=\Borch(\psi_{8,2})=F_8$ gives three very different constructions 
of the same modular form.   
Also, $ \phi_{11,2} =  \eta^{21} \vartheta_2 \in J_{11,2}^\cusp $ 
gives 
$\Borch(\psi_{11,2}) \in S_{11}(K(2))$, with representative singular part and divisor 
$$
\sing\left( \psi_{11,2} \right)=  22  + \zeta^2+ \zeta^{-2},
 \  
\Div\left( \Borch(\psi_{11,2}) \right) = \HA_2(4,2)+\HA_2(1,1).  
$$
Again, comparing the divisors and the first Fourier Jacobi coefficient, 
$\Borch(\psi_{11,2})$ and $\Grit(\psi_{11,2})$ are equal.  
Furthermore, 
$ S_{11}(K(2))=\C G_{11}$, for a generator~$G_{11}$ constructed from 
thetanullwerte.  The forms $F_8$ and $G_{11}$ are the cusp forms of lowest weight in 
the plus and minus spaces of the involution $\mu_2$, respectively, see  \cite{Ibuk97}.  
\smallskip

{\bf N=3.}  For $K(3)$, we can pick  
$
\phi_{6,3} =  \eta^{6} \vartheta_1^6 \in J_{6,3}^\cusp 
\text{ and } 
\psi_{6,3}= - \frac{\phi_{6,3} | V_2}{\phi_{6,3}} 
$ 
and get a Borcherds Product with a zero constant term in its Fourier expansion, $\Borch(\psi_{6,3}) \in S_6(K(3))$,  and with 
representative singular part and divisor 
$$
\sing\left( \psi_{6,3} \right)= 12  + 6\zeta+ 6\zeta\inv, 
 \quad
\Div\left( \Borch(\psi_{6,3}) \right) = 6 \HA_3(1,1).  
$$
As above, according to the divisor principle,  
we have $\Borch(  \psi_{6,3}  ) = \Grit( \phi_{6,3} )$.
The same argument shows that this is the only Siegel cusp form  for $K(3)$ of weight $6$,  
a fact first proved in \cite{Ibuk85}.   
Similarly, we have 
$
\phi_{9,3} =  \eta^{15} \vartheta_1^2\vartheta_2 \in J_{9,3}^\cusp 
$ 
and $\Borch(\psi_{9,3}) =\Grit(\phi_{9,3})$ spans $ S_9(K(3))$, with 
representative singular part and divisor 
\begin{align*}
\sing\left( \psi_{9,3} \right)  &= 18   + \zeta^2+ 2\zeta+ 2\zeta\inv + \zeta^{-2},  \\
\Div\left( \Borch(\psi_{9,3}) \right) &= \HA_3(4,2) +3 \HA_3(1,1) .  
\end{align*}

{\bf N=4.}   
For $K(4)$, 
$\phi_{4,4} =  \vartheta_1^8  \in J_{4,4}$ and 
$\Borch( \psi_{4,4}) \in M_{4}( K(4) )$ satisfy 
$$
\sing\left( \psi_{4,4} \right)  =  8  + 8\zeta + 8\zeta\inv, \quad
\Div\left( \Borch(\psi_{4,4}) \right) =  8\HA_4(1,1).  
$$
The Borcherds Product, $\Borch(\psi_{4,4})$,  is our first example of a noncusp form.  
The Jacobi form $\phi_{4,4}$ is not a cusp form but this does not affect the divisor 
argument. Thus $\Borch(\psi_{4,4})=\Grit(\phi_{4,4})$.
Also we have 
$\phi_{7,4} = \eta^{9} \vartheta_1^4\vartheta_2 \in J_{7,4}^\cusp$ and 
$\Borch( \psi_{7,4}) \in S_{7}( K(4) )$ satisfies  
\begin{align*}
\sing\left( \psi_{7,4} \right)  =  14  + \zeta^2 + 4\zeta + 4\zeta \inv + \zeta^{-2}, 
\\
\Div\left( \Borch(\psi_{7,4}) \right) =  \HA_4(4,2)+5\HA_4(1,1).  
\end{align*}
Also, 
$\phi_{10,4} = \eta^{18} \vartheta_2^2 \in J_{10,4}^\cusp$ and 
$\Borch( \psi_{10,4}) \in S_{10}( K(4) )$ satisfy 
\begin{align*}
\sing\left( \psi_{10,4} \right)  &= 20  + 2 \zeta^2 + 2 \zeta^{-2},  \\
\Div\left( \Borch(\psi_{10,4}) \right)   &=  2\HA_4(4,2)+2\HA_4(1,1).  
\end{align*}
In both cases the Gritsenko lift of $\phi$ is equal to the Borcherds Product for $\psi$ 
according to the divisor argument.
From \cite{IPY} we have the dimensions  
$\dim S_7(K(4)) =1$ and 
$\dim S_{10}(K(4)) =2$.  
\smallskip

{\bf N=5.}  (see \cite[\S 4.3]{GritNiku98PartII}).  
For $K(5)$, 
$\phi_{5,5} = \eta^{3} \vartheta_1^6\vartheta_2 \in J_{5,5}^\cusp$ and 
$\Borch( \psi_{5,5}) \in S_{5}( K(5) )$ satisfy 
\begin{align*}
\sing\left( \psi_{5,5} \right)  &= 10   + \zeta^2+ 6\zeta+ 6\zeta\inv + \zeta^{-2} ,   \\
\Div\left( \Borch(\psi_{5,5}) \right)  &=  \HA_5(4,2) +7 \HA_5(1,1).  
\end{align*}
Next, 
$\phi_{8,5} = \eta^{12} \vartheta_1^2\vartheta_2^2 \in J_{8.5}^\cusp$ and 
$\Borch( \psi_{8,5}) \in S_{8}( K(5) )$ satisfy 
\begin{align*}
\sing\left( \psi_{8,5} \right)  &= 16  + 2 \zeta^2+ 2 \zeta + 2 \zeta\inv + 2 \zeta^{-2}+ 2 q \zeta^5+ 2 q \zeta^{-5},  \\
\Div\left( \Borch(\psi_{8.5}) \right)  &= 2\HA_5(5,5) +2\HA_5(4,2) +4\HA_5(1,1) .  
\end{align*}
These Borcherds Products are cusp forms because their Fourier expansions have a constant term of zero.  
In both these cases we have the equality of $\Borch(\psi)$ and $\Grit(\phi)$.  
The first case follows from the divisor argument or the dimension 
$\dim S_5(K(5))=1$ from \cite{Ibuk85}.  The divisor argument does not apply to the 
second case because of the terms~$2 q \zeta^5+ 2 q \zeta^{-5}$; for this, we may refer ahead to section~8 
or  appeal to Table~{3} of \cite{IPY}, 
which shows that $S_k(K(5))^{\epsilon}$, for $\epsilon=(-1)^k$, is determined by the Fourier Jacobi coefficients 
of indices $5$ and $10$ for $k < 12$.   
\smallskip

A basis of reflective Jacobi forms for all possible $N$ was given in  \cite{GN3}.  
From these we obtain many other identities between  $\Borch(\psi)$ and $\Grit(\phi)$, 
the most important of which are the modular forms having multiplicity one on their divisors.  
In these cases, 
the Fourier expansion determines the generators and relations of a Lorentzian
Kac--Moody super Lie algebra. The next example is of a  different nature.  
\smallskip

{\bf N=37.} 
We turn to a favorite Jacobi form and prove something new about it.  
Let $J_{2,37}^\cusp= \C f$, where $f$ is the  Jacobi cusp form 
of weight two and smallest index
introduced in \cite{EZ}  where a table of its Fourier coefficients was given.  
We will prove that:
\begin{equation}
\label{Zagier}
\forall \, n,r \in \Z, \quad 
\sum_{ \alpha \in \Z}  
c\left( 6 \alpha^2 + n \alpha, 30 \alpha +r; f \right) =0.  
\end{equation}
In \cite{GSZ}, $f$ is shown to be the theta block 
$ \eta^{-6} \vartheta_1^3 \vartheta_2^3 \vartheta_3^2 \vartheta_4 \vartheta_5$.  
The vanishing order is one and by Theorem~\ref{borcherdsproductseverywhere}, 
setting $\psi = - ( f | V_2)/f$, 
we have a holomorphic Borcherds lift, 
$\Borch(\psi) \in S_2(K(37))$, that shares its first two Fourier Jacobi coefficients 
with $\Grit(f) \in S_2(K(37))$.  
In \cite{PoorYuenPara} it was shown that for primes $p< 600$, 
if $p \not\in \{ 277, 349, 353, 389, 461, 523, 587\}$ then the weight two paramodular cusp forms are spanned 
by Gritsenko lifts, that is, 
$S_2(K(p))=\Grit \left( J_{2,p}^\cusp \right)$.  
Thence $S_2(K(37))$ is one dimensional and 
we see that the Gritsenko lift of~$f$ is a Borcherds Product as well.  
The singular Fourier coefficients of~$\psi$ are represented by 
$$
\sing(\psi)= q^{6}\zeta^{30} + 4 + 3 \zeta + 3 \zeta^2 + 2\zeta^3 +\zeta^4 + \zeta^5. 
$$
Thus the divisor of $\Borch(\psi)=\Grit(f)$ is 
\begin{align*}
\Div\left( \Borch(\psi) \right) = 
  &\Hum\mymat{6}{15}{15}{37}
+ \Hum\mymat{0}{5/2}{5/2}{37}
+ \Hum\mymat{0}{2}{2}{37}            \\
+ 2  &\Hum\mymat{0}{3/2}{3/2}{37}
+ 4\Hum\mymat{0}{1}{1}{37}
+ 10\Hum\mymat{0}{1/2}{1/2}{37}.  
\end{align*}
Thus $\Grit(f)$ vanishes on the Humbert surface in $K(37)^{+} \backslash \Half_2$, 
$$
\Hum\mymat{6}{15}{15}{37} = K(37)^{+} 
\{ \mymat{\tau}{z}{z}{\omega} \in \Half_2: 6 \tau + 30 z + 37 \omega=0 \}.  
$$
In terms of Fourier coefficients, write 
$$
\Grit(f) \mymat{\tau}{z}{z}{\omega} = 
\sum_{ \alpha, \beta, \gamma \in \Z} 
a\left( \mymat{\gamma}{\beta/2}{\beta/2}{37 \alpha};  \Grit(f) \right) 
q^{\gamma} \zeta^{\beta} \xi^{37 \alpha},
$$
where $q=e(\tau)$, $\zeta= e(z)$ and $\xi = e(\omega)$.  
Substitution for $\xi$ using the relation $q^6 \zeta^{30} \xi^{37} =1$  gives
$$
\sum_{ \alpha, \beta, \gamma \in \Z } 
a\left( \mymat{\gamma}{\beta/2}{\beta/2}{37 \alpha};  \Grit(f) \right) 
q^{\gamma-6\alpha} \zeta^{\beta-30\alpha}  
=0, 
$$
or, setting $n=\gamma-6\alpha$ and $r= \beta-30 \alpha$, 
$$
\forall n,r \in \Z, \quad
\sum_{ \alpha \in \Z } 
a\left( \mymat{n+ 6 \alpha}{{(r+30\alpha)}/2}{{(r+30\alpha)}/2}{37 \alpha};  \Grit(f) \right)   
=0.   
$$
Using 
$a\left( \mymat{n}{r/2}{r/2}{Nm} ; \Grit(f) \right) = 
\sum_{\delta \in \N: \delta | (n,r,m)} \delta^{k-1}  
c\left( \frac{nm}{\delta^2}, \frac{r}{\delta}; f \right)$, 
for the Fourier coefficients of the Gritsenko lift, 
we obtain 
$$
\forall n,r \in \Z, \ \ 
\sum_{ \alpha \in \Z } \, 
\sum_{\delta \in \N:\, \delta | (n+6\alpha,\, r+30\alpha,\,  \alpha)} \delta 
c\left( \frac{(n+6\alpha)\alpha}{\delta^2}, \frac{r+30\alpha}{\delta}; f \right)  
=0.  
$$
This may be reduced, by induction on $\gcd(n,r)$, to the case $\delta=1$, which is equation~{(\ref{Zagier})}, 
as claimed.  A direct proof of equation~{(\ref{Zagier})} 
was shown to us by D. Zagier, and we leave this as a challenge to the reader. 
\smallskip

It would be desirable to have a direct proof of 
$\Borch(\psi) = \Grit(\phi)$ in general when 
the order of vanishing of the theta block $\phi$ is one, 
instead of relying on information about modular forms available in specific cases.  
Indeed  this, as presented in Conjecture~{\ref{wow}}, 
would be used as an additional tool in the investigation of modular forms.  
 We, in fact, prove Conjecture~{\ref{wow}} in many cases; namely, 
weights $4 \le k \le 11$.  
The examples of this current section for weights $k \ge 4$ 
are thus merely the first instances in infinite families of Borcherds Products 
that are also Gritsenko lifts.  
As weight one paramodular forms with trivial character vanish, 
this leaves only the cases of weights~$2$ and~$3$ open, 
see \S 8 for further examples and discussion.   

\section{Siegel Modular forms, Jacobi forms and liftings }

Let $\Sp_n(\R)$ act on the Siegel upper half space $\Half_n$ by linear 
fractional transformations.  
Let $V_n(\R)$ be the Euclidean space of real $n$-by-$n$ symmetric matrices with inner product 
$\langle A, B \rangle = \tr(AB)$, and extend this product $\C$-linearly to $V_n(\C)$.  
Let $\Gamma$ be a subgroup of projective rational elements of $\Sp_n(\R)$ 
commensurable with $\Gamma_n = \Sp_n(\Z)$.  We write 
$M_{k}(\Gamma, \chi)$ for the $\C$-vector space of Siegel modular forms 
of weight~$k$ and character~$\chi$ with respect to $\Gamma$.  
These are holomorphic functions $f: \Half_n \to \C$ that transform with respect to 
$\sigma= \mymat{A}BC{D} \in \Gamma$ by the factor of automorphy 
$\mu_{\text{Siegel}}^k \,\chi$ where $\mu_{\text{Siegel}}(\sigma, \Omega)= \det(C \Omega+D)$.  
Using the slash notation, 
$(f|_k \sigma)(\Omega)= \mu_{\text{Siegel}}(\sigma, \Omega)^{-k} f(\sigma \cdot \Omega)$, 
we have $f|_k \sigma = \chi(\sigma) f$ for all $\sigma \in \Gamma$.  
For $n=1$, we additionally require boundedness at the cusps, 
which is redundant for $n \ge 2$ by the Koecher Principle.  
The space of meromorphic~$f$ satisfying this automorphy condition is 
denoted by $M_{k}^{ \text{mero} }(\Gamma, \chi)$.   
The space of cusp forms is defined, using Siegel's $\Phi$ map, as 
$S_{k}(\Gamma, \chi)= \{ f \in M_{k}(\Gamma, \chi): 
\forall \sigma \in \Gamma_n, \Phi( f |_k \sigma)=0 \}$.    
A Siegel modular form has a Fourier expansion of the form 
$f(\Omega) = \sum_T a(T) e\left( \langle \Omega, T \rangle \right)$, 
where $T$ runs over $\Xnsemi(\Q)$, the  semidefinite elements of $V_n(\Q)$,  
and where $e(z)= e^{2 \pi i z }$.  
For cusp forms we may restrict the $T$ to $\Xn(\Q)$, the  definite elements of $V_n(\Q)$.  
The {\it principal\/} congruence subgroups are $\Gamma_n(N)= \{ \sigma \in \Gamma_n: \sigma \equiv I_{2n} \mod N \}$.  
We will be concerned with degree~$n=2$ and the {\it paramodular\/} group 
of level~$N$:  
$$
K(N)= 
\begin{pmatrix} 
*  &  N*  &  *  &  *  \\
*  &  *  &  *  &  */N  \\
*  &  N*  &  *  &  *  \\
N*  &  N*  &  N*  &  *  
\end{pmatrix} 
\cap \Sp_2(\Q), 
\quad \text{ $ * \in \Z$, }  
$$
which is isomorphic to the the integral symplectic group of the skew symmetric 
form with the elementary divisors $(1,N)$, see \cite{GritArith} and \cite{Grit2}.
Fourier expansions of paramodular forms sum over 
$T \in \Xtwo^{\text{\rm semi}}(N)= \{ \mymat{a}bb{Nc} \in \Xtwo^{\text{\rm semi}}(\Q): a, 2b, c \in \Z \}$.
 The paramodular group K(N) is not maximal in the real symplectic group 
$\Sp_2(\R)$ of rank $2$, see \cite{GritHulek0} for a complete description of its extensions.  
In particular,  
for any natural number $N>1$ the paramodular group $K(N)$ has a normalizing involution   
$\mu_N$ given by 
$\mu_N = \mymat{F_N^{*}}{0}{0}{F_N}$, where 
$F_N =\frac{1}{ \sqrt{N} } \mymat{0}{1}{-N}{0}$ is the Fricke involution, 
and we will frequently use the group 
$K(N)^{+}$ generated by $ K(N)$ and $\mu_N $.  
We let  $\chi_F: K(N)^+ \to \{ \pm 1 \}$ be the nontrivial character with kernel~$K(N)$
and observe that $M_k( K(N) ) = M_k(K(N)^+) \oplus M_k( K(N)^+, \chi_F)$ 
is the decomposition into plus and minus $\mu_N$-eigenspaces.  
\smallskip

The following definition of Jacobi forms,  see \cite{GritNiku98PartII},  
is equivalent to the usual one  \cite{EZ}. The only difference is that 
the book of Eichler and Zagier does not address Jacobi forms of half-integral
index,  which play a rather important role in \cite{GritNiku98PartII}.    
Consider two types of elements in  $\Gamma_2$,  
$$
h = 
\begin{pmatrix} 
1  &  0  &  0  &  v  \\
\lambda  &  1  &  v  &  \kappa  \\
0  &  0  &  1  &  -\lambda  \\
0  &  0  &  0  &   1
\end{pmatrix};
\quad 
\begin{pmatrix} 
a  &  0  &  b  &  0  \\
0  &  1  &  0  &  0  \\
c  &  0  &  d  &  0  \\
0  &  0  &  0  &   1
\end{pmatrix}, 
$$
for $ \lambda, v, \kappa \in \Z$,  and for $\mymat{a}bcd \in \SL_2(\Z)$.   
Let the subgroup of $\Gamma_2$ generated by the $h$ 
be called the Heisenberg group $H(\Z)$.  
The  character $v_H: H(\Z) \to \{ \pm 1 \}$ is 
defined by $v_H(h) = (-1)^{\lambda v + \lambda + v + \kappa}$.  
The second type constitute a copy of $\SL_2(\Z)$ inside $\Gamma_2$.  
This  copy of $\SL_2(\Z)$ and $H(\Z)$ and $ \pm I_4$ generate a group 
inside $\Gamma_2$ equal to
$$
P_{2,1}(\Z)= 
\begin{pmatrix} 
*  &  0  &  *  &  *  \\
*  &  *  &  *  &  *  \\
*  &  0  &  *  &  *  \\
0  &  0  &  0  &  *  
\end{pmatrix} 
\cap \Sp_2(\Z), 
\quad \text{ $ * \in \Z$.}  
$$
The character $v_H$ extends uniquely to a character on $P_{2,1}(\Z)$ 
that is trivial on the copy of $\SL_2(\Z)$.  Likewise, the factor of automorphy 
of the Dedekind Eta function
$$
\mu_{\eta}\left(  \mymat{a}bc{d}, \tau \right) = 
\dfrac{ \eta \left( \frac{a \tau +b}{c \tau + d} \right)}{ \eta(\tau) } = 
\sqrt{ c \tau + d }\, \epsilon\left(  \mymat{a}bc{d} \right),
$$
extends uniquely to a factor of automorphy on $ P_{2,1}(\Z) \times ( \Half_1 \times \C)$ 
that is trivial on $H(\Z)$ 
and we use this extension as the definition of the multiplier 
$\epsilon: P_{2,1}(\Z) \to e\left( \frac{1}{24}\Z \right)$.  
We also write $\epsilon = \epsilon_{\eta}$ for clarity.  

For $m \in \Q$, $a,b, 2k \in \Z$, 
consider holomorphic $\phi: \Half_1 \times \C \to \C$, 
such that the modified function ${\tilde \phi}: \Half_2 \to \C$,  
given by ${\tilde \phi}\mymat{\tau}{z}{z}{\omega} = \phi(\tau, z) e( m \omega)$, 
transforms by the factor of automorphy $\mu_{ \text{Siegel} }^k \epsilon^a v_H^b$ for $P_{2,1}(\Z)$.  
We always select holomorphic branches of roots that are positive on the purely imaginary elements of the Siegel half space.  
We necessarily have $2k \equiv a \equiv b \mod 2$ and $m \ge 0$ for nontrivial~$\phi$.  
Such $\phi$ have Fourier expansions 
$ \phi(\tau, z) = \sum_{ n,r \in \Q } c(n,r; \phi) q^n \zeta^r$, 
for $q=e(\tau)$ and $\zeta= e(z)$.  
We write $\phi \in J_{k,m}\wh( \epsilon^a v_H^b)$ if, 
additionally, the support of $\phi$ has $n$ bounded from below, 
and call such forms {\it weakly holomorphic.\/}   We write 
 $\phi \in J_{k,m}\weak( \epsilon^a v_H^b)$ if the support of 
 $\phi$ satisfies $n \ge 0$; 
 $\phi \in J_{k,m}( \epsilon^a v_H^b)$ if $4mn-r^2 \ge 0$; 
 $\phi \in J_{k,m}^\cusp( \epsilon^a v_H^b)$ if $4mn-r^2 > 0$.  
 Similar definitions are made for subgroups.  
 For example, $\phi \in J_{k,m}(\Gamma(N), \epsilon^a v_H^b)$ means that we demand 
 the automorphy of ${\tilde \phi}$ for $P_{2,1}(\Z) \cap \Gamma_2(N)$ and 
 demand the corresponding support condition at each cusp.

 For basic examples of Jacobi forms we make use of  
the Dedekind Eta function 
$
\eta(\tau)=q^{\frac{1}{24}} 
\prod_{n \in \N} (1-q^n) 
$
and the odd Jacobi theta function:  
\begin{align*}
\vartheta(\tau, z) 
&=\sum_{n \in \Z} (-1)^n q^{\frac{(2n+1)^2}{8}}\zeta^{\frac{2n+1}{2}}  \\
&=q^{\frac18}\left( \zeta^{\frac12}- \zeta^{-\frac12} \right) 
\prod_{j \in \N}(1 - q^j\zeta)(1-q^j\zeta^{-1})
(1-q^j).
\end{align*}
We have $\vartheta \in J_{\frac12,\frac12}^\cusp( \epsilon^3 v_H)$, 
$ \eta \in J_{\frac12,0}^\cusp( \epsilon)$ and 
$\vartheta_{\ell} \in J_{\frac12,\frac12 \ell^2}^\cusp( \epsilon^3 v_H^\ell)$, 
where $\vartheta_{\ell}(\tau,z) = \vartheta(\tau, \ell z)$ and $\ell \in \N$, 
compare \cite{GritNiku98PartII}.  
\smallskip

We now report on the existence and uniqueness of the characters, denoted  
$\epsilon^a \times v_H^b:K(N)^{+} \to e(\frac{\Z}{12})$, 
whose restriction to $P_{2,1}(\Z)$ is $\epsilon^a  v_H^b$ and whose value on $\mu_N$ is one.  
It follows from \cite{GritArith} that 
the extended paramodular group 
$K(N)^{+}$ is generated by $\mu_N$ and $ P_{2,1}(\Z)$.  
Thus any character $\chi: K(N)^{+} \to e(\Q)$ is determined by its value on $\mu_N$ 
and its restriction to $ P_{2,1}(\Z) $.  
For the existence, 
a result of \cite{GritHulek} is that the 
abelianization of $K(N)^{+}$ is $\Z/2\Z \times \Z / Q \Z$, 
where $Q= \gcd(2N,12)$.   
The character~$v_F$ is an element of order two.  
Furthermore, there is a character of order~$Q$ that has 
a restriction to $ P_{2,1}(\Z) $ given by $\epsilon^{24/Q} v_H^{2N/Q}$ 
and a value of one on $\mu_N$.  
Accordingly, for $a,b \in \Z$, the character $\epsilon^a \times v_H^b$ exists 
precisely when there is a $j \in \Z$ such that 
$
a \equiv j \frac{24}{\gcd(2N,12)} \mod 24$ and $b \equiv j \frac{2N}{\gcd(2N,12)} \mod 2.  
$
These brief considerations suffice,   
since we eventually prove that the paramodular forms considered here have  trivial character.  

\smallskip

Let $ \phi \in J_{k,t}\wh$ be a weakly holomorphic Jacobi form.  
Recall the level raising Hecke operators 
$V_{\ell}: J_{k,t}\wh \to J_{k,t\ell}\wh$ from \cite{EZ}, page 41.  
These operators stabilize both $J_{k,t}$ and $J_{k,t}^\cusp$ 
and have the following action on Fourier coefficients: 
$$
c\left( n,r;\phi | V_m \right)= 
\sum_{ d \in \N: \, d | (n,r,m) } 
d^{k-1} c\left( \frac{nm}{d^2},\, \frac{r}{d}; \phi \right).  
$$
Given any $\phi \in J_{k,t}\wh$, 
we may consider the following series  
$$
\Grit(\phi)\mymat{\tau}zz{\omega} = 
\delta(k)c(0,0;\phi)G_k(\tau)+
\sum_{m \in \N} 
\left( \phi | V_m \right)(\tau,z) e( mt \omega) 
$$
where  $\delta(k)=1$ for even  $k\ge 4$ and  $\delta(k)=0$ for all other $k$,  and  
$G_k(\tau)=(2\pi i)^{-k}{(k-1)!    \,   \zeta(k)}+
\Sigma_{n\ge 1}\sigma_{k-1}(n)e(\tau)$
is the Eisenstein series of weight $k$.  
\begin{thm} 
{\rm  (\cite{GritArith}, \cite{Grit2})\/} 
For $\phi \in J_{k,t}$, the series $\Grit(\phi)$ converges on $\Half_2$ and defines a holomorphic 
function $\Grit(\phi): \Half_2 \to \C$ 
that 
is an element of $M_k\left( K(t)^+, \chi_F^k \right)$.
This is a cusp form if  $\phi\in J^{\rm cusp}_{k,t}$.  
\end{thm}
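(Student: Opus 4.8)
The plan is to verify the three defining properties — holomorphy, the automorphy law, and the cusp condition — directly from the Fourier series, reducing modularity to invariance under a generating set of $K(t)^+$. By the fact quoted above that $K(t)^+$ is generated by $P_{2,1}(\Z)$ and the Fricke involution $\mu_t$, it suffices to establish (i) local uniform convergence, (ii) the $P_{2,1}(\Z)$-transformation law with factor $\mu_{\text{Siegel}}^k$ and trivial character, and (iii) the identity $\Grit(\phi)|_k\mu_t = (-1)^k\Grit(\phi)$. Since $P_{2,1}(\Z) \subset K(t) = \ker\chi_F$ (a direct matrix comparison of the two patterns) and $\chi_F(\mu_t) = -1$, these combine to the asserted character $\chi_F^k$, and the Koecher principle supplies boundedness at the cusps for free.

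For convergence I would first record that $V_m$ maps $J_{k,t}$ into $J_{k,tm}$ preserving holomorphy and trivial character, and that a holomorphic Jacobi form has Fourier coefficients of polynomial growth in $n$; the $V_m$ coefficient formula then bounds $|c(n,r;\phi|V_m)|$ by a polynomial in $nm$. Writing $Y = \IM\Omega > 0$ and $T = \mymat{n}{r/2}{r/2}{tm} \ge 0$, the estimate $\langle Y, T\rangle \ge \lambda_{\min}(Y)\,\tr(T)$ together with $|r| \le n + tm$ shows that each summand $e(\langle\Omega,T\rangle)$ decays exponentially in $n + tm$, dominating the polynomial growth; hence the series converges absolutely and locally uniformly, defining a holomorphic function. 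As a by-product I obtain the Fourier expansion of $\Grit(\phi)$ over $T \in \Xtwo^{\text{\rm semi}}(t)$ and, collecting terms, the coefficient formula $a\!\left(\mymat{n}{r/2}{r/2}{tm};\Grit(\phi)\right) = \sum_{\delta \mid (n,r,m)}\delta^{k-1} c\!\left(\tfrac{nm}{\delta^2},\tfrac r\delta;\phi\right)$. Here the Eisenstein term $\delta(k)c(0,0;\phi)G_k(\tau)$ is exactly what supplies the boundary rows $m=0$, since $\sum_{\delta\mid n}\delta^{k-1} = \sigma_{k-1}(n)$ matches the coefficients of $G_k$ and $c(0,s;\phi)=0$ for $s \ne 0$.

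The $P_{2,1}(\Z)$-transformation law is then essentially built in: each degree-two modification $\widetilde{\phi|V_m}\mymat{\tau}zz\omega = (\phi|V_m)(\tau,z)\,e(tm\omega)$ transforms under $P_{2,1}(\Z)$ by $\mu_{\text{Siegel}}^k$ precisely because $\phi|V_m \in J_{k,tm}$ has integral weight and trivial character and its index $tm$ matches the $\omega$-exponent; the Eisenstein term transforms by $(c\tau+d)^k = \mu_{\text{Siegel}}^k$ under the embedded $\SL_2(\Z)$ and is fixed by $H(\Z)$ and by $\omega \mapsto \omega+1$, so summing preserves everything. The heart of the argument is the Fricke invariance (iii). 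A direct computation gives $\mu_t\cdot\Omega = \mymat{t\omega}{-z}{-z}{\tau/t}$ with $\det F_t = 1$, hence $\mu_{\text{Siegel}}(\mu_t,\Omega) = 1$; pairing $\mu_t\cdot\Omega$ against $T$ shows that the coefficient of $\Grit(\phi)(\mu_t\cdot\Omega)$ at $\mymat{n}{r/2}{r/2}{tm}$ equals $a\!\left(\mymat{m}{-r/2}{-r/2}{tn};\Grit(\phi)\right)$. Invoking the coefficient formula, the symmetries $nm = mn$ and $(n,r,m) = (m,r,n)$, together with the weight-$k$ Jacobi relation $c(n,-r;\phi) = (-1)^k c(n,r;\phi)$, collapse this to $(-1)^k a\!\left(\mymat{n}{r/2}{r/2}{tm};\Grit(\phi)\right)$, which is exactly $\Grit(\phi)|_k\mu_t = (-1)^k\Grit(\phi)$.

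Finally, if $\phi \in J^{\cusp}_{k,t}$ then $c(0,0;\phi) = 0$, so no Eisenstein term appears, and $c(n',r';\phi) \ne 0$ forces $4tn' - r'^2 > 0$; in the coefficient formula this forces $4tnm - r^2 = 4\det T > 0$ whenever $a(T) \ne 0$, so $\Grit(\phi)$ is supported on positive-definite $T$ and is a cusp form. I expect the main obstacle to be the Fricke step (iii): although it reduces to the clean symmetry above, it is the only place where the index-swap of the paramodular group, the multiplicativity hidden in $V_m$, and the parity of the Jacobi coefficients must all conspire, and particular care is needed at the boundary indices $nm = 0$, where the Eisenstein contribution participates.
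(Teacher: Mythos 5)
The paper does not actually prove this theorem; it is imported wholesale from \cite{GritArith} and \cite{Grit2}, so there is no internal proof to compare against. Your strategy is nevertheless the classical Maass--Gritsenko argument and it is the right one: the reduction to the generators $P_{2,1}(\Z)$ and $\mu_t$ of $K(t)^+$ is legitimate (and note it is generators of $K(t)^+$, not of $K(t)$, that you need, precisely because the transformation law is asserted on the whole extended group); the $P_{2,1}(\Z)$-covariance of each Fourier--Jacobi term is indeed built into the definition of $\phi|V_m\in J_{k,tm}$; and your Fricke computation checks out --- $\mu_t\cdot\Omega=\mymat{t\omega}{-z}{-z}{\tau/t}$ with $\mu_{\text{Siegel}}(\mu_t,\Omega)=\det F_t=1$, the coefficient swap $T=\mymat{n}{r/2}{r/2}{tm}\mapsto\mymat{m}{-r/2}{-r/2}{tn}$, and the sign from $c(n,-r;\phi)=(-1)^kc(n,r;\phi)$. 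Working the Fricke step at the level of Fourier coefficients is also the correct move, since the $m=0$ Fourier--Jacobi coefficient $G_k(\tau)$ is not termwise $\mu_t$-covariant but is exchanged with the sub-series $c(0,0;\phi)\sum_m\sigma_{k-1}(m)e(tm\omega)$.

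Two points need shoring up. First, and most substantively, the cuspidality claim: the paper's definition requires $\Phi(f|_k\sigma)=0$ for \emph{every} $\sigma\in\Gamma_2$, i.e.\ at every rational one-dimensional boundary component, and for composite $t$ the group $K(t)$ has boundary components inequivalent to both the standard one and its $\mu_t$-translate. Positive-definiteness of the support of the Fourier expansion at the standard cusp gives $\Phi(\Grit(\phi))=0$ only; the paper itself, in \S 2, has to combine $\Phi(f)=0$ with the $\mu_p$-eigenform property just to handle the second cusp at prime level. You need either the cusp analysis of $K(t)$ (cf.\ \cite{GritHulek0}, \cite{PY13}) showing that the singular coefficients at the standard cusp control all $\Phi(f|_k\sigma)$, or a direct boundedness estimate for $\det(\IM\Omega)^{k/2}|\Grit(\phi)(\Omega)|$. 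Second, the boundary indices you flag yourself: for even $k\ge4$ the $\delta(k)c(0,0;\phi)G_k$ term makes $a\mymat{n}{0}{0}{0}=\sigma_{k-1}(n)c(0,0;\phi)=a\mymat{0}{0}{0}{tn}$ as required, and for odd $k$ both vanish since $c(0,0;\phi)=(-1)^kc(0,0;\phi)=0$; but for $k=2$ the definition sets $\delta(2)=0$, so your Fricke symmetry at these indices forces $c(0,0;\phi)=0$ --- you should either prove this for $\phi\in J_{2,t}$ or note that the statement must be restricted to cusp forms in weight $2$ (which is the only way the paper uses it there).
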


The paramodular form $\Grit(\phi)$ is called the {\it Gritsenko lift\/} of the Jacobi form~$\phi$ 
and defines a linear map 
$\Grit: J_{k,t} \to M_k\left( K(t)^{+}, \chi_F^k \right)$.  
Forms with character $\chi_F^k$ are called {\it symmetric,\/}  
with character $\chi_F^{k+1}$, {\it antisymmetric.\/}   
Gritsenko lifts are hence symmetric.  
Antisymmetric forms are usually harder to construct. 
A different type of lifting construction is due to Borcherds (see \cite{B}) 
via his theory of infinite products 
in many variables on orthogonal groups.
The divisor of a Borcherds Product is supported on rational quadratic divisors.
In the case of the Siegel upper half plane of degree two, these 
rational quadratic divisors are the so-called 
Humbert modular surfaces.  
\begin{df}
Let $N \in \N$.  
For $n_o,r_o,m_o \in \Z$ with $m_o \ge 0$ and $\gcd(n_o,r_o,m_o)=1$,  
set $T_o = \mymat{n_o}{r_o/2}{r_o/2}{Nm_o}$ such that $\det(T_o) < 0$.
We call
$$
\Hum(T_o)= K(N)^{+} \{ \Omega \in \Half_2:\, \< \Omega, T_o \>=0 \} \subseteq K(N)^{+} \backslash \Half_2.
$$  
a {\sl Humbert modular surface\/}.  
\end{df}
From  \cite{GritHulek0} we have that a Humbert surface $\Hum(T_o)$ 
only depends upon two pieces of data: 
the discriminant $D=r_o^2-4Nm_on_o$ 
and $r_o \mod 2N$.  We may use this data to parameterize Humbert surfaces; 
write $\HA_N(D,r)= \Hum(T_o)$  
for any $T_o$ of the form $ \mymat{n_o}{r_o/2}{r_o/2}{Nm_o}$,  
with $\gcd(n_o,r_o,m_o)=1$ and $m_o \ge 0$, 
satisfying $-\det(2T_o) =D$ and $\langle T_o, \mymat0110 \rangle \equiv r \mod 2N$.  
For convenience, 
we extend the notation $\HA_N(D,r)$ to be empty when no such $T_o$ exists. 

The original Borcherds construction \cite{B} used the  Fourier coefficients of 
vector valued modular forms and was written using the Fourier expansion at a $0$-dimensional
cusp of an orthogonal modular variety.  A variant of Borcherds Products proposed   
by Gritsenko and Nikulin, see \cite{GN1} and \cite{GritNiku98PartII},  
was based on the Fourier expansion at a $1$-dimensional  cusp.  
The difference between these two approaches 
was explained in \cite{Grit24} 
for the  Borcherds modular form $\Phi_{12}\in M_{12}(O^+(II_{2,26}))$.  
For the proof of our main Theorem \ref{borcherdsproductseverywhere} we will use 

\begin{thm} 
\label{BP}
{\rm (\cite{GN1}, \cite{GritNiku98PartII}, \cite{Grit24})}   
Let $N, N_o \in \N$.    
Let $\Psi \in J_{0,N}\wh$ be a weakly holomorphic Jacobi form with Fourier expansion
$$
\Psi(\tau,z) = \sum_{n,r \in \Z: \, n \ge -N_o } c\left( n,r \right) q^n \zeta^r
$$
and $c(n,r) \in \Z$ for $4Nn-r^2 \le 0$.  
Then we have  $c(n,r) \in \Z$ for all $n,r \in \Z$.  
We set
\begin{align*}
&24A= \sum_{\ell \in \Z} c(0,\ell); \quad 
2 B = \sum_{\ell \in \N} \ell c(0,\ell); \quad 
4 C = \sum_{\ell \in \Z} \ell^2 c(0,\ell);   \\
&D_0 = \sum_{n \in \Z: \, n <0} \sigma_0(-n) c(n,0); \ 
k= \frac12 c(0,0); \ 
\chi = (\epsilon^{24A} \times v_H^{2B}) \chi_F^{k+D_0}.
\end{align*}
There is a function $\Borch(\Psi) \in M_k\mero\left( K(N)^+, \chi \right)$ 
whose divisor in $K(N)^{+} \backslash \Half_2$ consists of Humbert surfaces 
$\Hum(T_o)$ 
for $T_o = \mymat{n_o}{r_o/2}{r_o/2}{Nm_o}$ with $\gcd(n_o,r_o,m_o)=1$ and $m_o \ge 0$.  
The multiplicity of $\Borch(\Psi) $ on $\Hum(T_o)$ is 
$\sum_{n \in \N} c(n^2n_om_o, nr_o)$.  
In particular, 
if $c(n,r) \ge 0$ when $4Nn-r^2 \le 0$ then 
 $\Borch(\Psi) \in M_k\left( K(N)^+, \chi \right)$.  In particular,  
$$
\Borch(\Psi)(\mu_N\latt{\Omega})=(-1)^{k+D_0} \Borch(\Psi)(\Omega), 
\text{ for $\Omega \in \Half_2$. }  
$$ 
For sufficiently large  $\lambda$, for $\Omega=\mymat{\tau}zz{\omega} \in \Half_2$ 
and $q=e(\tau)$, $\zeta=e(z)$, $\xi=e(\omega)$, 
the following product converges on $\{\Omega \in \Half_2: \IM \Omega > \lambda I_2 \}$:  
$$
\Borch(\Psi)(\Omega){=}
q^A \zeta^B \xi^C 
\prod_{\substack{ n,r,m \in \Z:\, m \ge 0, \text{\rm\ if $m=0$ then $n \ge 0$} \\  \text{\rm and if $m=n=0$ then $r < 0$. } }}
\left( 1-q^n \zeta^r \xi^{Nm} \right)^{ c(nm,r) }
$$
and is on $\{\Omega \in \Half_2: \IM \Omega > \lambda I_2 \}$ 
a rearrangement of
$$
\Borch(\Psi)= 
\left(  \eta^{c(0,0)}  \prod_{ \ell \in \N} \left( \frac{{\tilde \vartheta}_{\ell}}{\eta} \right)^{c(0,\ell)} \right) 
\exp\left( - \Grit({ \Psi })\right).  
$$
\end{thm}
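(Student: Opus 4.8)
The plan is to deduce the theorem from Borcherds' singular theta lift in the one-dimensional-cusp formulation of Gritsenko and Nikulin. First I would realize the extended paramodular group as an orthogonal group, $K(N)^+ \cong O^+(L)$ for the even lattice $L = U \oplus U \oplus \latt{-2N}$ of signature $(2,3)$, with $\Half_2$ serving as the tube-domain model of the associated symmetric space and the Humbert surfaces $\Hum(T_o)$ corresponding to the rational quadratic (Heegner) divisors $\lambda^\perp$. The theta decomposition $\Psi(\tau,z) = \sum_{\mu \bmod 2N} h_\mu(\tau)\,\theta_{N,\mu}(\tau,z)$ converts $\Psi \in J_{0,N}\wh$ into a vector-valued weakly holomorphic modular form $F = (h_\mu)$ of weight $1 - \tfrac{3}{2} = -\tfrac{1}{2}$ for the Weil representation $\rho_L$; in $F$ the coefficient $c(n,r)$ depends only on $4Nn - r^2$ and on $r \bmod 2N$, and the hypothesis $4Nn - r^2 \le 0$ picks out exactly the principal part of $F$. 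The singular theta lift of $F$ then produces a function whose logarithmic norm is the regularized theta integral, and by Borcherds' theorem this is a meromorphic automorphic form $\Borch(\Psi)$ on $O^+(L)$ carrying the stated infinite product expansion.

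With the lift in hand, I would read off the remaining assertions from Borcherds' general formulas after translating back to the Jacobi/paramodular dictionary. Integrality of every $c(n,r)$, not merely of the singular ones, follows because there are no nonzero holomorphic forms of the negative weight $-\tfrac{1}{2}$, so the principal-part map is injective and $F$ is determined by its principal part; invoking the integral structure on weakly holomorphic forms for $\rho_L$ (an integral basis whose principal parts span a $\Z$-lattice), an integral principal part forces all $c(n,r) \in \Z$. The weight is $k = \tfrac{1}{2}c(0,0)$; the divisor is the sum of Humbert surfaces $\Hum(T_o)$ with multiplicity $\sum_{n\in\N} c(n^2 n_o m_o, n r_o)$, by the standard computation of the order of vanishing of a Borcherds product along $\lambda^\perp$ together with the discriminant bookkeeping identifying $\lambda^\perp$ with $\Hum(T_o)$. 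The multiplier system is evaluated on the generators of $K(N)^+$: the abelian part $\epsilon^{24A}\times v_H^{2B}$ comes from the Weyl vector $(A,B,C)$ appearing in the prefactor $q^A\zeta^B\xi^C$, while the sign $\chi_F^{k+D_0}$, with $D_0 = \sum_{n<0}\sigma_0(-n)c(n,0)$, records the reflection contributions governing the behaviour under the Fricke involution $\mu_N$.

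If $c(n,r) \ge 0$ whenever $4Nn - r^2 \le 0$, then every multiplicity $\sum_{n} c(n^2 n_o m_o, n r_o)$ is nonnegative, so $\Borch(\Psi)$ has no poles; being meromorphic and holomorphic in codimension one it is holomorphic by the Koecher principle, giving $\Borch(\Psi) \in M_k(K(N)^+,\chi)$. The involution formula $\Borch(\Psi)(\mu_N\latt{\Omega}) = (-1)^{k+D_0}\Borch(\Psi)(\Omega)$ is then simply the statement that $\mu_N$ acts through $\chi_F^{k+D_0}$, since $\chi_F(\mu_N) = -1$ and $(\epsilon^{24A}\times v_H^{2B})(\mu_N) = 1$, which is the content of the character computation above.

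Finally, convergence of the product for $\IM \Omega > \lambda I_2$ is Borcherds' estimate once $(A,B,C)$ lies in the appropriate Weyl chamber. For the rearrangement I would split the product along the index $m$. The $m=0$ factors, together with the prefactor $q^A\zeta^B\xi^C$, assemble through the infinite-product formulas for $\eta$ and $\vartheta_\ell$ into the leading Fourier–Jacobi coefficient $\eta^{c(0,0)}\prod_{\ell}({\tilde\vartheta}_{\ell}/\eta)^{c(0,\ell)}$, a theta block of weight $k$ and index $C=N$; here the data $24A = \sum_\ell c(0,\ell)$, $2B = \sum_{\ell>0}\ell c(0,\ell)$, $4C = \sum_\ell \ell^2 c(0,\ell)$ are exactly what match the $\eta$- and $\vartheta$-exponents. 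The $m \ge 1$ factors I would handle by taking $-\log$, expanding $\log(1-x) = -\sum_{j\ge1}x^j/j$, and resumming with the substitution $M=mj,\ n'=nj,\ r'=rj,\ d=j$; through the $V_m$ coefficient formula $c(n',r';\Psi|V_M) = \sum_{d\mid(n',r',M)} d^{-1} c(n'M/d^2,r'/d)$ (using weight $0$, so $d^{k-1}=d^{-1}$) this reorganizes into $\sum_{m\ge1}(\Psi|V_m)(\tau,z)\,\xi^{Nm} = \Grit(\Psi)$, yielding the factor $\exp(-\Grit(\Psi))$. The main obstacle I anticipate is the precise determination of the multiplier system — controlling the Weyl-vector data and the reflection count $D_0$ so that the character is exactly $(\epsilon^{24A}\times v_H^{2B})\chi_F^{k+D_0}$, together with the lattice bookkeeping matching Heegner divisors to Humbert surfaces; by comparison the convergence and the rearrangement are formal once the lift itself is established.
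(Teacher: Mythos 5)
The paper offers no proof of this theorem; it is quoted from \cite{GN1}, \cite{GritNiku98PartII} and \cite{Grit24}, so there is no internal argument to compare yours against. Your outline is the standard proof from those sources: theta-decompose $\Psi$ into a vector-valued weakly holomorphic form of weight $-1/2$ for the Weil representation of $U\oplus U\oplus\latt{-2N}$, apply the singular theta lift at the one-dimensional cusp, and translate the Heegner-divisor, Weyl-vector and multiplier data back through the Humbert-surface/Jacobi-form dictionary; likewise the splitting of the product at $m=0$ and the resummation of the $m\ge 1$ factors via $\log(1-x)=-\sum_j x^j/j$ and the weight-zero $V_m$ coefficient formula into $\exp(-\Grit(\Psi))$ is exactly the computation carried out in \cite{GritNiku98PartII} and \cite{Grit24}. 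The two places where your sketch conceals real work are (i) the integrality claim, which in \cite{GritNiku98PartII} rests on an explicit integral generating set for weight-zero weakly holomorphic Jacobi forms rather than a generic ``integral basis whose principal parts span a lattice'' principle, and (ii) the value $(-1)^{k+D_0}$ of the character on $\mu_N$, which requires a genuine computation at the cusp and is not mere reflection bookkeeping; both are supplied by the cited references.
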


{\bf Remarks:} This last representation of $\Borch(\Psi)$ 
gives an experimental algorithm for the construction of Borcherds products.  
It gives  the first two
Fourier Jacobi coefficients of $\Borch(\Psi)$:
the first one is a theta block $\Theta= \eta^{ c(0,0)} \prod (\vartheta_{\ell}/\eta)^{c(0,\ell)}$ 
and the second is the product
$-\Theta \Psi$.  
As is standard, the convergence of an infinite product on $\Half_2$ is 
not defined to mean that the sequence of partial products has a limit;  
rather, it means that for each $\Omega \in \Half_2$, 
some tail of the product has a sequence of partial products with a nonzero limit.   
The next proposition is essential in the proof of the Theorem just stated, see   \cite{GritNiku98PartII}.

\begin{prop} 
\label{Identity}
Continuing with the notation of Theorem~\ref{BP}, set 
$D_1=\sum_{n ,r\in\Z:\, n<0\,}\sigma_1(-n)\,c(n,r;\Psi)$.  
We have $tA-tD_1-C=0$.  
\end{prop}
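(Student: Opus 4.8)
The plan is to extract from the weight-$0$, index-$N$ Jacobi form $\Psi$ a scalar modular form of weight~$2$ for $\SL_2(\Z)$ via its Taylor expansion in the elliptic variable, and then to exploit the fact that such a form, being weakly holomorphic, has vanishing constant term.

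First I would expand $\Psi(\tau,z)=\sum_{\nu\ge0}\frac{\chi_\nu(\tau)}{\nu!}(2\pi i z)^\nu$ with $\chi_\nu(\tau)=\sum_{n,r\in\Z}c(n,r)\,r^\nu q^n$, so that $\chi_0(\tau)=\Psi(\tau,0)=\sum_n(\sum_r c(n,r))q^n$ and $\chi_2(\tau)=\sum_n(\sum_r r^2 c(n,r))q^n$. Reading off the coefficient of $q^0$ and comparing with the definitions in Theorem~\ref{BP} gives the two constant terms
$$[\chi_0]_0=\sum_{\ell\in\Z}c(0,\ell)=24A,\qquad [\chi_2]_0=\sum_{\ell\in\Z}\ell^2 c(0,\ell)=4C.$$
Since $\Psi$ is holomorphic on $\Half_1\times\C$, each $\chi_\nu$ is holomorphic on $\Half_1$; because $\Psi\in J_{0,N}\wh$ the $q$-support is bounded below, so the $\chi_\nu$ are weakly holomorphic on $\Half_1$.

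Next I would record the modular behavior. The translation $\tau\mapsto\tau+1$ fixes all $\chi_\nu$, and applying $\partial_z^2|_{z=0}$ to the index-$N$ transformation law under $S=\mymat0{-1}10$ (the standard Taylor-coefficient computation of \cite{EZ}) shows $\chi_0$ is modular of weight~$0$ while $\chi_2$ is only quasimodular:
$$\chi_0(-1/\tau)=\chi_0(\tau),\qquad \chi_2(-1/\tau)=\tau^2\chi_2(\tau)-\tfrac{iN}{\pi}\,\tau\,\chi_0(\tau).$$
The usual Eichler--Zagier remedy, subtracting a multiple of $\chi_0'$, is unavailable here precisely because the weight is $0$ (so $\chi_0'$ is already genuinely modular of weight~$2$ and carries no anomaly). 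Instead I would cancel the anomaly with the quasimodular Eisenstein series $E_2=1-24\sum_{n\ge1}\sigma_1(n)q^n$, which satisfies $E_2(-1/\tau)=\tau^2E_2(\tau)-\tfrac{6i}{\pi}\tau$. Since $\chi_0$ has weight~$0$, the product $E_2\chi_0$ has anomaly $-\tfrac{6i}{\pi}\tau\chi_0$, so
$$\xi:=\chi_2-\tfrac{N}{6}\,E_2\chi_0$$
transforms as $\xi(-1/\tau)=\tau^2\xi(\tau)$ and is $T$-invariant, hence is a weakly holomorphic modular form of weight~$2$ for $\SL_2(\Z)$.

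The decisive step is that the constant term of $\xi$ must vanish: $\xi(\tau)\,d\tau$ descends to a meromorphic differential on $X(1)\cong\Pj^1$, holomorphic away from the cusp, whose residue at the cusp is $\tfrac{1}{2\pi i}[\xi]_0$, so the residue theorem forces $[\xi]_0=0$ (equivalently, for weight~$0$ the operator $q\,\tfrac{d}{dq}$ carries weakly holomorphic weight-$0$ forms onto weakly holomorphic weight-$2$ forms, all of which are total derivatives with no constant term). I would then compute $[\xi]_0$ directly: with $[\chi_2]_0=4C$ and $[E_2\chi_0]_0=[\chi_0]_0-24\sum_{j\ge1}\sigma_1(j)[\chi_0]_{-j}=24A-24D_1$, where I have identified $\sum_{j\ge1}\sigma_1(j)\sum_r c(-j,r)=\sum_{n<0,\,r}\sigma_1(-n)c(n,r;\Psi)=D_1$, one gets
$$0=[\xi]_0=4C-\tfrac{N}{6}(24A-24D_1)=-4\,(NA-ND_1-C).$$
This is the claimed identity (with the index written $t=N$). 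The only real obstacle is the degeneracy at weight~$0$ --- fixing the $E_2$-correction and its constant --- together with the clean justification that a weight-$2$ weakly holomorphic form on $\SL_2(\Z)$ has no constant term; everything else is bookkeeping of Fourier coefficients.
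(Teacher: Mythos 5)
Your proof is correct, and the computation checks out: $\chi_2(-1/\tau)=\tau^2\chi_2(\tau)-\tfrac{iN}{\pi}\tau\chi_0(\tau)$, the $E_2$-correction $\xi=\chi_2-\tfrac{N}{6}E_2\chi_0$ is genuinely modular of weight~$2$, and $[\xi]_0=4C-4N(A-D_1)=0$ yields exactly $tA-tD_1-C=0$. The paper itself gives no proof of Proposition~\ref{Identity}, deferring to \cite{GritNiku98PartII}; your argument is the standard one used there --- the second development coefficient of a weight-zero Jacobi form, suitably corrected by $E_2$, is a weakly holomorphic weight-$2$ form on $\SL_2(\Z)$ and so has vanishing constant term --- so you have in effect supplied the omitted proof rather than found a new route. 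One small point worth making explicit: the finiteness of the sums $\sum_r c(n,r)r^\nu$ for fixed $n$ follows from the Heisenberg transformation law (which bounds $4Nn-r^2$ from below once $n$ is bounded from below), and is what makes the $\chi_\nu$ honest weakly holomorphic $q$-series.
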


By using multiplicative Hecke operators, 
one can show that paramodular Borcherds Products satisfy special identities, 
see Theorem~{3.3} and the identity~{(3.25)} in   \cite{GritNiku98PartII}.  
Heim and Murase have proven a converse, that these special multiplicative 
identities in fact  characterize  Borcherds Products 
among automorphic forms, see \cite{HeimMurase}.  

\section{Generalized Valuations}  

Let $R$ be a ring and $G$ an abelian semigroup.  
A map $ \nu: R \setminus \{0\} \to G$ 
satisfies the {\it valuation property\/} on $R$ if 
$$
\nu(fg) = \nu(f) + \nu(g)
$$
for all nontrivial $f, g \in R$;  
we call such $\nu$ a {\it generalized valuation, \/}    
When $G$ is also partially ordered, one could potentially 
ask for the additional property: 
for all $y \in G$, $y \ge \nu(f)$ and $y \ge \nu(g)$ imply $y \ge \nu(fg)$, 
but this property plays no direct role for us here.   What is important is that 
certain rings of formal 
series admit generalized valuations into partially ordered abelian semigroups of closed convex sets.  

For a simple example of a generalized valuation, 
consider the ring $R_n = \C[x_1, x_1\inv, \dots, x_n, x_n\inv]$ of 
Laurent polynomials in $n$~variables.  
Let $f= \sum_{I \in \Z^n} a(I) x^{I} \in R_n$, 
where we use the multi-index notation $x^I=\prod_{i=1}^{n} x_i^{I_i}$,  
and define the support of $f$ as 
$\supp(f) =\{ I \in \Z^n: a(I) \ne 0 \}$.  
For $S \subseteq \R^n$, denote the {\it convex hull\/} of~$S$ by 
$\conv(S) = 
\{ \sum_{i=1}^{\text{finite}} \alpha_i s_i  \in \R^n: 
s_i \in S, \, 
\alpha_i \ge 0, \, \sum_{i=1}^{\text{finite}} \alpha_i =1\}$.  
Define the generalized valuation 
\begin{align*}
\vp: R_n \setminus \{0\} & \to \text{ Closed convex subsets of $\R^n$,}  \\
f  & \mapsto \conv\left( \supp(f)  \right). 
\end{align*}
The closed convex subsets of~$\R^n$ are a partially ordered abelian semigroup under 
pointwise addition 
$K_1+K_2= \{ x+y \in \R^n:  x \in K_1 \text{ and } y \in K_2 \}$ 
and inclusion $ K_1 \subseteq K_2$.  The map $\vp$ is indeed a generalized 
valuation and selected examples will convince the reader that this is not altogether trivial.  

\begin{prop}
Let $f,g \in R_n$ be nontrivial Laurent polynomials.  We have 
$
\vp(fg) = \vp(f) + \vp(g).  
$
\end{prop}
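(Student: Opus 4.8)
The plan is to prove the two inclusions $\vp(fg)\subseteq\vp(f)+\vp(g)$ and $\vp(f)+\vp(g)\subseteq\vp(fg)$ separately; together they give the asserted equality. The first inclusion is routine. Writing $f=\sum_I a(I)x^I$ and $g=\sum_J b(J)x^J$, the coefficient of $x^K$ in $fg$ is $\sum_{I+J=K}a(I)b(J)$, which can be nonzero only if $K=I+J$ for some $I\in\supp(f)$ and $J\in\supp(g)$; hence $\supp(fg)\subseteq\supp(f)+\supp(g)$. Using the elementary identity $\conv(A)+\conv(B)=\conv(A+B)$ (a Minkowski sum of convex sets is convex), this yields $\vp(fg)=\conv(\supp(fg))\subseteq\conv(\supp(f)+\supp(g))=\vp(f)+\vp(g)$.

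For the reverse inclusion I would exploit that $P:=\vp(f)$ and $Q:=\vp(g)$ are polytopes, being convex hulls of finite subsets of $\Z^n$, so that $P+Q$ is a polytope equal to the convex hull of its finitely many vertices. Since $\vp(fg)$ is convex, it therefore suffices to show that every vertex $w$ of $P+Q$ lies in $\supp(fg)$. Each vertex of a polytope is exposed, so I may fix a linear functional $\ell\in(\R^n)^{*}$ for which $w$ is the unique maximizer of $\ell$ on $P+Q$. Because $\max_{P+Q}\ell=\max_P\ell+\max_Q\ell$ and the maximizer set of $\ell$ on $P+Q$ is precisely the Minkowski sum of the maximizer sets of $\ell$ on $P$ and on $Q$, the uniqueness of $w$ forces each of those two maximizer sets to be a single point, say $p$ on $P$ and $q$ on $Q$, with $w=p+q$; moreover $p\in\supp(f)$ and $q\in\supp(g)$, since the unique $\ell$-maximizer over the convex hull of a finite set is one of the points of that set.

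The crux, and the one place where cancellation could a priori spoil the conclusion, is to check that $w=p+q$ actually survives in the product $fg$. The coefficient of $x^{p+q}$ in $fg$ is $\sum a(I)b(J)$, summed over $I\in\supp(f)$ and $J\in\supp(g)$ with $I+J=p+q$, so I must rule out hidden cancellation among several such decompositions. Applying $\ell$ to any such relation gives $\ell(I)+\ell(J)=\ell(p)+\ell(q)=\max_P\ell+\max_Q\ell$, while $\ell(I)\le\max_P\ell$ and $\ell(J)\le\max_Q\ell$; equality therefore forces $\ell(I)=\max_P\ell$ and $\ell(J)=\max_Q\ell$, whence $I=p$ and $J=q$ by the uniqueness of the maximizers. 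Thus the sum has exactly one term and the coefficient equals $a(p)\,b(q)\ne 0$, so $w=p+q\in\supp(fg)\subseteq\vp(fg)$. As every vertex of $P+Q$ lies in the convex set $\vp(fg)$, we conclude $\vp(f)+\vp(g)\subseteq\vp(fg)$, which completes the proof.
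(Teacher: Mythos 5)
Your proof is correct. The overall skeleton matches the paper's: the easy inclusion via $\supp(fg)\subseteq\supp(f)+\supp(g)$, and the hard inclusion by showing that the extreme points of $\vp(f)+\vp(g)$ already lie in $\supp(fg)$. Where you diverge is in the mechanism for that key step. The paper argues by contradiction from extremality alone: if the coefficient at an extreme point $I_o$ were cancelled, there would be two distinct decompositions $I_o=A+B=a+b$, and then $I_o$ would be the midpoint of the segment joining $A+b$ and $a+B$ inside $\vp(f)+\vp(g)$, contradicting that $I_o$ is extreme. You instead use that every vertex of a polytope is exposed, pick a supporting linear functional $\ell$, and show via $\ell(I)\le\max_P\ell$, $\ell(J)\le\max_Q\ell$ that the decomposition $w=p+q$ is unique, so the coefficient of $x^{p+q}$ is the single product $a(p)b(q)\ne 0$. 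Your route yields the slightly stronger conclusion that the coefficient at each vertex is literally one product of coefficients, but it leans on exposedness, which holds for polytopes yet fails for general convex bodies; the paper's midpoint argument works for arbitrary extreme points and is exactly the step it later recycles (in Theorem 4.5) for the infinite-dimensional Jacobi setting, where extreme points need not be exposed and Straszewicz's theorem has to be invoked separately. Both arguments are complete and correct for the Laurent polynomial case at hand.
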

\begin{proof}
We have $\supp(fg) \subseteq \supp(f) + \supp(g)$ 
directly from the definition of polynomial multiplication and taking convex hulls gives one containment 
$\vp(fg) \subseteq \conv\left( \supp(f) + \supp(g) \right) =  \vp(f) + \vp(g)$.  
The other containment uses the Krein-Milman Theorem: 
a compact set $K$ in a Euclidean space $V$ is the convex hull of its extreme points $E(K)$; 
an extreme point of~$K$ being, by definition, 
a point of~$K$ that is not in the interior of any line segment contained in~$K$, see \cite{Rock}, page 167.  
Let $I_o$ be an extreme point of 
$ \vp(f) + \vp(g)= \conv\left( \supp(f) + \supp(g) \right) $ 
so that necessarily $I_o \in \supp(f) + \supp(g) $, see \cite{Rock}, page 165.  
We will conclude the proof by showing that $I_o \in \supp(fg)$ so that 
\begin{align*}
\vp(fg)   &=  \conv\left(  \supp(fg)  \right)  \\ 
               & \supseteq \conv\left(  E(  \vp(f) + \vp(g) )  \right) 
               \overset{K.M.}{=}  \vp(f) + \vp(g).  
\end{align*}

If $I_o \not\in \supp(fg)$, 
then the coefficient of $x^{I_o}$ was cancelled in the multiplication of~$f$ and~$g$ and so 
there are at least two decompositions 
$$
I_o = A+B= a+b,
$$
with $A,a \in \supp(f)$ and $B,b \in \supp(g)$ and $(A,B) \ne (a,b)$, hence $A \ne a$ and $B \ne b$.  
Let $m,w \in  \vp(f) + \vp(g)$ be defined as $m =A+b$ and $w =a +B$.  
Note that $m$, $w$ and $I_o$ are distinct.  
However, $I_o$ is the midpoint of $ \overline{mw} \subseteq  \vp(f) + \vp(g)$ 
and $I_o$ is extreme, a contradiction.  
\end{proof}
The ``other'' property is $ \vp(f+g) \subseteq \conv\left(  \vp(f) \cup \vp(g) \right)$.  
\smallskip

A naive generalization of the valuation property to infinite series is false.  
For example, $(1+x+x^2+\dots )(1-x)=1$ but 
$[0, \infty) + [0,1]=[0, \infty) \ne \{0\}$.  
From the point of view of convex geometry, 
the issue here is applying an appropriate generalization of the 
Krein-Milman Theorem to closed convex subsets~$C$ of a euclidean space~$V$.  
Let $P \subseteq V$ be a subset called {\it points\/} and $D \subseteq V$ be a 
subset called {\it directions.\/}   Define a generalized notion of convex hull 
that incorporates directions as well as points by 
\begin{align*}
&\conv(P; D) = \\
&\{ \sum_{i=1}^{\text{finite}} \alpha_i p_i + \sum_{j=1}^{\text{finite}} \beta_j v_j \in V: \, 
p_i \in P, \, v_j \in D, \, 
\alpha_i, \beta_j \ge 0, \, \sum_{i=1}^{\text{finite}} \alpha_i =1\}.
\end{align*}
More geometrically, we can say 
$ \conv(P; D) = \conv(P) + \cone_0(D)$, 
where $\cone(D)= \R_{> 0} \conv(D)$ and  $\cone_0(D)= \R_{\ge 0} \conv(D)$ as usual.  
A {\it face\/} of a convex set~$C$ is a convex subset~{$\tilde C$} of $C$ such that every closed line 
segment in $C$ with a relative interior point in $\tilde C$ lies entirely in $\tilde C$.  
Thus, the extreme points of $C$ are exactly the zero dimensional faces.  
If a face~$\tilde C$ of~$C$ is a ray then the parallel ray from the origin is called 
an {\it extreme direction\/.}  The {\it recession cone\/} of a nonempty convex set~$C$ 
is defined as 
$$
\rcone(C)=\{y \in V: C + \R_{\ge 0}y \subseteq C \} .  
$$
For a nonempty closed convex set, 
an extreme direction is necessarily a subset of the recession cone, see \cite{Rock}, page~163 
or Theorem~{8.3} on page 63.  
For an important example, consider the convex parabolic region 
$C=\{(x,y) \in \R^2: y \ge x^2\}$;  the recession cone of $C$ consists of the 
nonnegative $y$-axis, which is {\sl not\/} an extreme direction of $C$.  
Here is a generalization of the Krein-Milman Theorem which allows unboundedness.  

\begin{thm}
\label{GKM}
{\rm (\cite{Rock}, page 166.)}  
Let $V$ be a Euclidean space.  
Let $C \subseteq V$ be a closed convex set containing no lines.  
Then $C$ is the convex hull of its extreme points and extreme directions.  
\end{thm}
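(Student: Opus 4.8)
The plan is to establish the nontrivial inclusion $C \subseteq \conv(E; D)$, where $E$ denotes the set of extreme points of $C$ and $D$ its set of extreme directions; the reverse inclusion is immediate, since every extreme point lies in $C$ and every extreme direction lies in $\rcone(C)$, so that every generalized convex combination $\conv(E;D) = \conv(E) + \cone_0(D)$ stays inside $C$. I would argue by induction on the dimension $d$ of the affine hull of $C$. The base case $d=0$ is a single point, which is its own unique extreme point, and the compact Krein--Milman Theorem recalled above handles any bounded slice we produce.

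For the inductive step, fix $x \in C$. If $x$ is a relative boundary point, there is a supporting hyperplane $H$ at $x$, and the exposed face $F = C \cap H$ is a closed convex set of strictly smaller dimension that again contains no lines. The inductive hypothesis expresses $x$ as a point of $\conv(E(F); D(F))$. The key bookkeeping fact, which I would verify straight from the definition of a face, is that a face of a face is a face; hence every extreme point of $F$ is an extreme point of $C$ and every extreme direction of $F$ is an extreme direction of $C$, so that $x \in \conv(E; D)$.

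If instead $x$ is a relative interior point, I would pass an arbitrary line $L$ through $x$ inside the affine hull. Because $C$ contains no lines, $L \cap C$ is either a bounded segment or a single ray. In the segment case its two endpoints are relative boundary points already handled above, and $x$ lies on the segment between them. In the ray case $L \cap C = \{y + t\delta : t \ge 0\}$, where $y$ is a relative boundary point (handled above) and $\delta \in \rcone(C)$, so that $x = y + t_0 \delta$ for some $t_0 \ge 0$; it then remains only to express the recession direction $\delta$ through the extreme directions of $C$.

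The main obstacle is precisely this last reduction of an arbitrary recession direction to extreme directions. Since $C$ contains no lines, $\rcone(C)$ is a pointed closed convex cone, and the extreme directions of $C$ are exactly its extreme rays; so I would prove, by a parallel induction on dimension, the conical analogue that a pointed closed convex cone equals $\cone_0$ of its extreme rays. A clean way to package both statements at once is to homogenize, replacing $C$ by the closed convex cone generated by $C \times \{1\}$ in $\R^{d+1}$, which is pointed exactly because $C$ contains no lines; its extreme rays at height one recover the extreme points of $C$ and those at height zero recover the extreme directions, so that slicing the conical statement at height one yields the theorem. Either route reduces everything to the compact Krein--Milman Theorem applied on supporting hyperplanes, together with the elementary heredity of faces.
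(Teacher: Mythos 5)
The paper does not prove this statement; it is quoted verbatim from Rockafellar (Theorem~18.5, p.~166), so the only comparison available is with the standard argument there. Your skeleton --- induction on $\dim C$, supporting hyperplanes at relative boundary points, heredity of faces (so that extreme points and extreme directions of a face of $C$ are extreme points and extreme directions of $C$), and reduction of a relative interior point to boundary points along a line --- is exactly that argument. The genuine gap is in your treatment of the unbounded case. You propose, when $L\cap C$ is a ray $\{y+t\delta:\,t\ge 0\}$, to finish by ``expressing the recession direction $\delta$ through the extreme directions of $C$,'' via the claim that the extreme directions of $C$ are exactly the extreme rays of $\rcone(C)$. That claim is false, and the paper itself exhibits the counterexample two sentences before the theorem: for $C=\{(x,y):\,y\ge x^2\}$ the recession cone is the nonnegative $y$-axis --- a single extreme ray of itself --- yet $C$ has no extreme directions at all, since no face of $C$ is a ray. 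For that $C$ and $x=(0,1)$, taking $L$ to be the $y$-axis puts you squarely in your ray case with $\delta=(0,1)$ not expressible through extreme directions, so the step fails (even though $x\in\conv(E)$ via a different, horizontal, line). The homogenization variant founders on the same example: the cone generated by $C\times\{1\}$ is not closed, and its closure acquires the extreme ray through $(0,1,0)$ at height zero, which corresponds to no extreme direction of $C$; slicing at height one therefore proves only a weaker statement involving a larger set of directions.

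The repair is to choose the line rather than take an arbitrary one. Handle $\dim C\le 1$ directly (a closed ray is a face of itself, so its direction is an extreme direction). For $\dim C\ge 2$, since $\rcone(C)$ is a pointed closed convex cone in the direction space of the affine hull, the union $\rcone(C)\cup(-\rcone(C))$ cannot exhaust that space: its two pieces are closed and meet only at the origin, while the punctured space is connected. Hence there is a direction $u$ parallel to the affine hull with $\pm u\notin\rcone(C)$, and the line through $x$ in direction $u$ meets $C$ in a \emph{bounded} closed segment whose endpoints are relative boundary points; the ray case never arises. Extreme directions then enter only through the base case and the inductive step on proper faces, which is consistent with the parabolic region having none.
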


For a second example, 
define a generalized valuation $\vs$ on polar Laurent series of one variable
\begin{align*}
\vs: \C((x)) \setminus \{0\} & \to \text{ Closed convex subsets of $\R^1$,}  \\
f  & \mapsto \conv\left( \supp(f); [0, \infty)  \right). 
\end{align*}
The valuation property of $\vs$ follows from the equality 
$\vs(f)=[\min(\supp(f)), \infty)$, 
so that the left endpoint is the usual ``order of vanishing'' valuation of polar Laurent series.  
In general finite dimension, the defintion
\begin{align*}
\vs: \C((x_1, \dots,x_n)) \setminus \{0\} & \to \text{ Closed convex subsets of $\R^n$,}  \\
f  & \mapsto \conv\left( \supp(f); [0, \infty)^n  \right), 
\end{align*}
satisfies the valuation propetry, 
as can be proven directly or reduced to the one variable case.  
Here, the other property is  $\vs(f+g) \subseteq \conv\left( \vs(f) \cup \vs(g); [0,\infty)^n \right)$.  
\smallskip

For a third example, 
similar results hold for Siegel modular forms.  
For $f \in S_k(\Gamma_n)$ with 
$\supp(f)= \{ T \in \Xn: a(T;f) \ne 0 \}$ and 
$\vsg=\Cl_{V_n(\R)}\left(  \conv\left( \R_{\ge 1} \supp(f) \right) \right)$, 
we have the valuation property for nontrivial cusp forms, see \cite{PSY}.  
As a remark, $\vsg(f)$ has the following property, see \cite{PY00}.  
If $\det(Y)^{k/2} | f( X + i Y) | $ attains its maximum at $X_o+iY_o \in \Half_n$, 
then $\frac{k}{4 \pi} Y_o\inv \in \vsg(f)$.  
\smallskip

Generalized valuations for Jacobi forms require extra care, 
being an intermediate case between elliptic and Siegel modular forms.  
For $N \in \N$,  let 
$R(N) = \C[ \zeta^{1/N}, \zeta^{-1/N}]((q^{1/N}))$ 
be formal polar Laurent-Puiseaux series in two variables, 
considering~$q$ to be the first and~$\zeta$ the second variable.  
For $f = \sum_{n,r \in \Q} c(n,r) q^n \zeta^r \in R(N)$ 
define the support $\supp(f)=\{ (n,r) \in (\frac1{N} \Z)^2: c(n,r) \ne 0 \}$ and 
$$
\vJ(f)= 
\vJacobi(f) = \Cl_{\R^2}
\left(  \conv\left( \supp(f); [0,\infty) \times \{0\} \right) \right).
$$
The map $\vJ$ does not have the valuation property 
on the entire ring $R(N)$ 
but we regain the valuation property on a subring that allows recession only in the $q$-direction.  
Set the ray $\rayA= [0,\infty) \times\{0\}$ for brevity. 
We will require that $\supp(f)$ be contained in a closed convex parabolic region 
with recession cone~$\rayA$.  
\begin{prop}
The set $R_J(N)= 
\{ f \in R(N): \exists a \in \R_{+}, \exists b,c \in \R: 
\supp(f) \subseteq 
\{ (n,r) \in \R^2:  n \ge a r^2 +br+ c \}  \}  
$
is a ring.  
\end{prop}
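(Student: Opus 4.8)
The plan is to exhibit $R_J(N)$ as a subring of the ring $R(N)$; since $R(N)$ is a ring, it suffices to verify that $R_J(N)$ contains $0$ and $1$ and is closed under negation, addition and multiplication. For $a \in \R_{+}$ and $b,c \in \R$ write $P(a,b,c)=\{(n,r)\in\R^2: n \ge ar^2+br+c\}$ for the closed convex parabolic region, so that $f \in R_J(N)$ says exactly that $\supp(f)\subseteq P(a,b,c)$ for some such triple. The empty support of $0$ lies in every region, the singleton $\supp(1)=\{(0,0)\}$ lies in $P(1,0,0)$, and negation preserves support, so these cases are immediate. I would also note that each $P(a,b,c)$ with $a>0$ meets any horizontal line $n=n_0$ in a bounded set of $r$ and is bounded below in $n$; this both confirms $R_J(N)\subseteq R(N)$ and guarantees that the Cauchy products below are well defined formal series.

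For addition, $\supp(f+g)\subseteq \supp(f)\cup\supp(g)$, so with $\supp(f)\subseteq P(a_1,b_1,c_1)$ and $\supp(g)\subseteq P(a_2,b_2,c_2)$ it is enough to find a single region lying below both boundary parabolas. Taking $a=\tfrac12\min(a_1,a_2)>0$, $b=0$, and $c$ sufficiently negative makes $a_i r^2 + b_i r + c_i - (ar^2 + c)\ge 0$ for all $r$ and each $i$, since this difference is a quadratic with positive leading coefficient $a_i-a$ and hence bounded below. Thus $P(a_1,b_1,c_1)\cup P(a_2,b_2,c_2)\subseteq P(a,0,c)$ and $f+g\in R_J(N)$.

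Multiplication is the heart of the argument. From the definition of the Cauchy product, $\supp(fg)\subseteq \supp(f)+\supp(g)\subseteq P(a_1,b_1,c_1)+P(a_2,b_2,c_2)$ (Minkowski sum), so I must show that a Minkowski sum of two parabolic regions with positive leading coefficients again lies in one such region. Given $(n_1,r_1)$ and $(n_2,r_2)$ in the two regions, adding the defining inequalities gives $n_1+n_2 \ge a_1 r_1^2 + a_2 r_2^2 + b_1 r_1 + b_2 r_2 + c_1 + c_2$, and writing $r=r_1+r_2$ it suffices to bound the right side below by $ar^2+br+c$. I would choose $a=\tfrac12\frac{a_1a_2}{a_1+a_2}>0$ and invoke the identity
$$
a_1 r_1^2 + a_2 r_2^2 - a(r_1+r_2)^2 = \frac{(a_1 r_1 - a_2 r_2)^2}{a_1+a_2} + \frac{a_1 a_2}{2(a_1+a_2)}(r_1+r_2)^2,
$$
whose right side is a positive-definite quadratic form, vanishing only at the origin. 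Hence $a_1 r_1^2 + a_2 r_2^2 - a(r_1+r_2)^2 + b_1 r_1 + b_2 r_2$ is a positive-definite quadratic plus linear terms, so it is bounded below by some $-M$; taking $b=0$ and $c=c_1+c_2-M$ yields $n_1+n_2 \ge ar^2+c$ for all admissible points, so the Minkowski sum lies in $P(a,0,c)$ and $fg\in R_J(N)$.

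The one genuinely delicate point is this last step. The naive optimal choice $a=\frac{a_1a_2}{a_1+a_2}$ (the infimal-convolution value of the two quadratics) makes the quadratic form only positive \emph{semi}definite, degenerating along the line $a_1 r_1 = a_2 r_2$, where the linear terms $b_1 r_1 + b_2 r_2$ could run to $-\infty$; halving $a$ restores positive-definiteness and lets the quadratic absorb the linear terms. Everything else is bookkeeping, and the ``other property'' $\vJ(f+g)\subseteq\conv\left(\vJ(f)\cup\vJ(g);\rayA\right)$ plays no role in the ring statement itself.
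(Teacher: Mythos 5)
Your proof is correct and follows the same route as the paper, which disposes of the proposition in one sentence by asserting exactly the fact you verify: that the union and the Minkowski sum of two parabolic regions with recession cone $\rayA$ are contained in a third such region. Your explicit computation for the Minkowski sum (shrinking the infimal-convolution coefficient $\frac{a_1a_2}{a_1+a_2}$ by half to keep the quadratic form positive definite so it absorbs the linear terms) is a correct and complete justification of the detail the paper leaves implicit.
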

\begin{proof}
This just amounts to the fact that the sum or union of 
convex parabolic regions with the same recession cone is contained in another such region.  
\end{proof}
In $R_J(N)$, the closure in the definition of $\vJ$ is redundant.  
\begin{lm}
\label{star}
For nontrivial $f \in R_J(N)$, 
we have 
$E\left( \vJ(f) \right) \subseteq \supp(f)$ and 
\begin{align*}
\vJ(f) &=  \Cl_{\R^2}\left(  \conv \left( \supp(f); \rayA \right) \right) 
=     \conv\left( \supp(f); \rayA \right) .
\end{align*}
\end{lm}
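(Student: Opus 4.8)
The plan is to prove the single equality $\vJ(f)=\conv(\supp(f);\rayA)$ first (that the closure is redundant), because the inclusion $E(\vJ(f))\subseteq\supp(f)$ then follows by elementary convex geometry. Throughout write $S=\supp(f)$. By the definition of $R_J(N)$ there are $a\in\R_{+}$ and $b,c\in\R$ with $S\subseteq P:=\{(n,r)\in\R^2: n\ge ar^2+br+c\}$, and $S\subseteq(\tfrac1N\Z)^2$ is discrete. Since $P$ is closed and convex with $\rcone(P)=\rayA$, and $\conv(S;\rayA)=\conv(S)+\rayA$, we get $\conv(S;\rayA)\subseteq P$; in particular it contains no line, so the generalized Krein--Milman Theorem~\ref{GKM} applies to its closure. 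Thus everything reduces to showing that $K:=\conv(S)+\rayA$ is already closed.

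For closedness I would argue by sequences, the quadratic growth of the parabola being the essential ingredient. Let $x_k=(X_k,Y_k)\in K$ with $x_k\to x_\infty=(X_\infty,Y_\infty)$. Write $x_k=c_k+t_k(1,0)$ with $c_k\in\conv(S)$ and $t_k\ge 0$, and use Carath\'eodory in $\R^2$ to expand $c_k=\sum_{i=1}^{3}\lambda_{k,i}(n_{k,i},r_{k,i})$ with $(n_{k,i},r_{k,i})\in S$, $\lambda_{k,i}\ge 0$ and $\sum_i\lambda_{k,i}=1$. From $n_{k,i}\ge ar_{k,i}^2+br_{k,i}+c$ together with the boundedness of $X_k=\sum_i\lambda_{k,i}n_{k,i}+t_k$ and $Y_k=\sum_i\lambda_{k,i}r_{k,i}$ one gets that $\sum_i\lambda_{k,i}r_{k,i}^2$ is bounded, say by $R$; hence $\lambda_{k,i}|r_{k,i}|\le\sqrt{\lambda_{k,i}}\,\sqrt R\to 0$ whenever $\lambda_{k,i}\to 0$. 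Passing to a subsequence so that $\lambda_{k,i}\to\lambda_i$, each index with $\lambda_i>0$ satisfies $r_{k,i}^2\le R/\lambda_{k,i}$, so $r_{k,i}$ is bounded, eventually constant (values in $\tfrac1N\Z$), and then $n_{k,i}$ is bounded below by the parabola and above because $\lambda_{k,i}n_{k,i}\le X_k-\sum_{j\ne i}\lambda_{k,j}(ar_{k,j}^2+br_{k,j}+c)$; hence $n_{k,i}$ is eventually constant as well. Those summands therefore converge to a genuine convex combination $c_\infty\in\conv(S)$, while the indices with $\lambda_i=0$ contribute $0$ to $Y_\infty$ and a nonnegative amount in the limit to the $n$-coordinate. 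Collecting the estimates gives that $Y_\infty$ is the $r$-coordinate of $c_\infty$ and $X_\infty\ge$ its $n$-coordinate, whence $x_\infty\in c_\infty+\rayA\subseteq K$. This is the step I expect to be the main obstacle: the careful accounting showing that mass escaping to large $|r|$ is annihilated by the factor $a r_{k,i}^2$, so that no limit point is lost.

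With $\vJ(f)=K=\conv(S)+\rayA$ established, the extreme-point statement is formal. Let $I_o\in E(K)$. If $I_o=p+t(1,0)$ with $p\in\conv(S)$ and $t>0$, then $I_o$ is the midpoint of the segment from $I_o-\delta(1,0)$ to $I_o+\delta(1,0)$, both of which lie in $K$ for $0<\delta\le t$, contradicting extremality; so $I_o\in\conv(S)$. Since $\conv(S)\subseteq K$, extremality of $I_o$ in $K$ forces $I_o$ to be extreme in $\conv(S)$ as well. Finally, $I_o\in\conv(S)$ is a finite convex combination of points of $S$, and an extreme point of $\conv(S)$ must coincide with one of those points; hence $I_o\in S=\supp(f)$, which gives $E(\vJ(f))\subseteq\supp(f)$ and completes the lemma.
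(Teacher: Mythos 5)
Your proof is correct, but it reverses the logical order of the paper's argument and uses different tools for the key step. The paper proves $E(\vJ(f))\subseteq\supp(f)$ \emph{first}: it takes an exposed extreme point, i.e.\ the unique minimizer of a linear functional $h(n,r)=\alpha n+\beta r$, notes that the recession ray forces $\alpha>0$, and then uses the two structural facts about the support (it lies in the lattice $(\tfrac1N\Z)^2$ and inside a parabolic region with recession cone $\rayA$) to show that the sublevel sets of $h$ on $\supp(f)$ are finite, so the minimum of $h$ is attained at a support point which must be $I_o$; Straszewicz's theorem then upgrades this from exposed to all extreme points, and the generalized Krein--Milman Theorem~\ref{GKM} finally yields $\vJ(f)=\conv(E(\vJ(f));\rayA)\subseteq\conv(\supp(f);\rayA)$, which gives the redundancy of the closure as a corollary. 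You instead attack the closedness of $\conv(\supp(f))+\rayA$ head-on by a Carath\'eodory-plus-compactness argument, with the quadratic growth $n\ge ar^2+br+c$ killing any mass that escapes to large $|r|$ (the bound $\lambda_{k,i}|r_{k,i}|\le\sqrt{\lambda_{k,i}}\sqrt{R}$ is exactly the right estimate), and then deduce the extreme-point containment by elementary manipulations, never actually needing Straszewicz or Krein--Milman. Both routes exploit the same two hypotheses on the support; yours is more self-contained and yields the closedness as a direct structural fact, while the paper's is shorter given the convex-analysis machinery it cites and reuses the same exposed-point template in the proof of Theorem~\ref{Good}. One cosmetic point: a bounded sequence in $\tfrac1N\Z$ is constant along a further subsequence rather than ``eventually constant''; since you are already passing to subsequences this costs nothing, but the wording should be adjusted.
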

\begin{proof}
Let $I_o$ be an exposed extreme point of $\vJ(f)$; 
we will show that $I_o \in \supp(f)$.  
A face~${\tilde C}$ of a convex set~$C$ is {\it exposed\/} if 
${\tilde C}$ is the subset of~$C$ where some linear function~$h$ attains its minimum on~$C$.  
Let $h$ be the linear function on $\R^2$ that attains its minimum on $\vJ(f)$ uniquely at 
$I_o=(n_o,r_o)$.  We have $h(n,r)= \alpha n + \beta r$ for some $\alpha$, $\beta \in \R$.  
First, $ \alpha \ge 0$ since $(n_o,r_o) + \rayA \subseteq \vJ(f)$ and, additionally, 
$\alpha >0$ since 
$\{ I \in \vJ(f): h(I) = h(I_o) \}= \{ I_o\}$.  
Because $\alpha >0$ and $\supp(f)$ is contained in the latttice $(\frac1{N} \Z)^2$ 
and $\supp(f)$ is contained in a convex parabolic region with recession cone $\rayA$, 
the set 
${\mathcal S}(B)=\{ I \in \supp(f): h(I) \le B \}$ is finite for any $B >0$.  
Therefore $\inf_{I \in \supp(f) } h(I) = \min_{ I \in {\mathcal S}(h(I_1)) } h(I)$ 
for any $I_1 \in \supp(f)$.  
Thus $\rho= \min_{I \in \supp(f) } h(I)$ exists and is attained on a finite set ${\mathcal M} \subseteq \supp(f)$.  
It follows that $h \ge \rho$ on $\conv(\supp(f); \rayA)$ and that, 
by the continuity of $h$, we have $h \ge \rho$ on the closure $\vJ(f)$.  
Thus, the minimum of~$h$ on $\vJ(f)$ equals~$\rho$ and 
the face of $\vJ(f)$ where $h$ attains its minimum~$\rho$ equals $\{ I_o\}$ 
and contains ${\mathcal M\/}$;  
therefore $\{I_o\}={\mathcal M} \subseteq \supp(f)$.  
Thus the exposed extreme points of $\vJ(f)$ are contained in $\supp(f)$.  
By a theorem of Straszewicz, \cite{Rock}, page 167, 
any extreme point of a closed convex set is the limit of exposed extreme points.  
Since $\supp(f)$ is contained in a lattice, it is its own closure and we have 
$E\left( \vJ(f) \right) \subseteq \supp(f)$, which is the first assertion of this lemma.  
The extreme directions of $\vJ(f)$ are contained in $\{ \rayA \}$ 
and $\vJ(f)$ contains no lines,  
so that by the generalized Krein-Milman Theorem~\ref{GKM} we obtain 
$$
\vJ(f) = \conv\left( E\left( \vJ(f) \right); \rayA \right) 
\subseteq \conv \left( \supp(f); \rayA \right),
$$
which proves the second assertion.  
\end{proof}

\begin{thm}
\label{Good}
For  
$\vJ: R_J(N) \setminus \{0\}  \to \text{Closed convex subsets of $\R^2$}$,  
the valuation property 
$\vJ(fg) = \vJ(f) +\vJ(g)$ holds 
for all nontrivial $f,g \in R_J(N)$.  
\end{thm}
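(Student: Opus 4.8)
The plan is to mirror the proof of the polynomial case (Proposition following $\vp$), using Lemma~\ref{star} to dispense with closures and using the generalized Krein--Milman Theorem~\ref{GKM} in place of the classical one. First I would establish the easy containment. From the definition of multiplication in $R_J(N)$ we have $\supp(fg) \subseteq \supp(f) + \supp(g)$, so taking convex hulls with the common recession ray $\rayA$ gives $\vJ(fg) \subseteq \conv(\supp(f) + \supp(g); \rayA)$. A short computation shows $\conv(P_1; \rayA) + \conv(P_2; \rayA) = \conv(P_1 + P_2; \rayA)$, whence the right side equals $\vJ(f) + \vJ(g)$ by Lemma~\ref{star}. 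I should also check at the outset that $fg \in R_J(N)$, i.e. that $\supp(fg)$ again lies in a parabolic region with recession cone $\rayA$; this follows from Proposition~\ref{star}'s predecessor (that $R_J(N)$ is a ring) together with the fact that $fg$ is a well-defined element of $R(N)$, which requires knowing that each coefficient of $fg$ is a finite sum — guaranteed because the supports lie in parabolic regions.

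For the reverse containment $\vJ(f) + \vJ(g) \subseteq \vJ(fg)$, the strategy is to show that every extreme point and every extreme direction of $\vJ(f) + \vJ(g)$ is ``captured'' by $\vJ(fg)$, then invoke Theorem~\ref{GKM}. The extreme directions are controlled cheaply: $\vJ(f)+\vJ(g)$ has recession cone $\rayA$ (the sum of the two recession cones), contains no lines, and its only candidate extreme direction is $\rayA$ itself, which is the recession direction of $\vJ(fg)$ as well. The substantive part is the extreme points. Let $I_o$ be an extreme point of $\vJ(f)+\vJ(g)$. By Lemma~\ref{star} applied to the sum (or directly, since an extreme point cannot use the recession ray nontrivially), $I_o$ must decompose as $I_o = A + B$ with $A \in E(\vJ(f)) \subseteq \supp(f)$ and $B \in E(\vJ(g)) \subseteq \supp(g)$. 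I would then run the exact cancellation argument from the Laurent-polynomial proposition: if $I_o \notin \supp(fg)$ the coefficient of $q^{n_o}\zeta^{r_o}$ cancelled, forcing a second decomposition $I_o = a + b$ with $(a,b) \ne (A,B)$, whence $m = A+b$ and $w = a+B$ are distinct points of $\vJ(f)+\vJ(g)$ with $I_o$ their midpoint, contradicting extremality. Thus $E(\vJ(f)+\vJ(g)) \subseteq \supp(fg)$, and combining with the extreme-direction remark and Theorem~\ref{GKM} yields
$$
\vJ(f)+\vJ(g) = \conv\left( E(\vJ(f)+\vJ(g)); \rayA \right) \subseteq \conv(\supp(fg); \rayA) = \vJ(fg).
$$

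The main obstacle, and the reason this is genuinely harder than the polynomial case, is the finiteness needed to run the cancellation argument. In $R(N)$ the supports are infinite and extend to $q$-infinity, so a priori there could be infinitely many decompositions $I_o = A + B$, or the coefficient-of-$q^{n_o}\zeta^{r_o}$ in $fg$ could be an infinite sum. The parabolic-region hypothesis defining $R_J(N)$ is precisely what rescues this: for a fixed target exponent $I_o$, the set of pairs $(A,B)$ with $A \in \supp(f)$, $B \in \supp(g)$, $A+B=I_o$ is finite, because each of $\supp(f),\supp(g)$ meets any region of the form $\{n \le \text{const}\}$ in only finitely many lattice points (the argument already used in the proof of Lemma~\ref{star} to show ${\mathcal S}(B)$ is finite). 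Hence the coefficient is an honest finite sum and the two-decomposition contradiction goes through verbatim. I expect the bulk of the writing to consist of verifying this finiteness carefully and confirming that the decomposition $I_o = A+B$ may be taken with both $A,B$ in the respective \emph{supports} rather than merely in the convex hulls — which is where $E(\vJ(f)) \subseteq \supp(f)$ from Lemma~\ref{star} does the decisive work.
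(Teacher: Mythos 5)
Your proposal is correct and follows essentially the same route as the paper: the easy containment from $\supp(fg)\subseteq\supp(f)+\supp(g)$ together with Lemma~\ref{star}, and the reverse containment by showing every extreme point of $\vJ(f)+\vJ(g)$ lies in $\supp(fg)$ via the midpoint-cancellation argument and then applying the generalized Krein--Milman Theorem~\ref{GKM}. The only (harmless) divergences are that you reach $I_o\in\supp(f)+\supp(g)$ by decomposing the extreme point into extreme points of the summands rather than citing Rockafellar 18.3.1 directly, and that you spell out the finiteness of the coefficient sums, which the paper leaves implicit in the definition of $R_J(N)$.
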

\begin{proof}
Since  $\supp(fg) \subseteq \supp(f) + \supp(g)$, 
we use Lemma~\ref{star} to conclude 
\begin{align*}
\vJ(fg) & = \conv\left(  \supp(fg); \rayA \right)  \\
            & \subseteq \conv\left(  \supp(f) + \supp(g) ; \rayA \right)  = \vJ(f) +\vJ(g).
\end{align*}
To prove $\vJ(f) +\vJ(g) \subseteq \vJ(fg) $,  begin by 
taking an extreme point~$I_o$ of 
$ \vJ(f) +\vJ(g) = \conv\left(  \supp(f) + \supp(g) ; \rayA \right) $.   
We necessarily have $I_o \in  \supp(f) + \supp(g)$ by 18.3.1 of  \cite{Rock}, page 165.  
As in the proof for polynomials, $I_o \in \supp(fg)$ so that 
$E \left(  \vJ(f) +\vJ(g) \right) \subseteq \supp(fg)$.  
Since $\vJ(f) +\vJ(g)$ contains no lines,  
the generalized Krein-Milman Theorem~\ref{GKM} gives 
\begin{align*}
\vJ(f) +\vJ(g) & = \conv\left(  E \left(  \vJ(f) +\vJ(g) \right)  ; \rayA \right)  \\
            & \subseteq \conv\left(  \supp(fg)  ; \rayA \right)  = \vJ(fg).
\end{align*}
\end{proof}

\begin{lm}
\label{crislemma} 
The ring $R_J(N)$ contains 

\begin{itemize}
\item[1.] the polar Laurent polynomials, $\C[\zeta^{1/N}, \zeta^{-1/N}, q^{1/N}, q^{-1/N}]$, 
\item[2.]  
 the Fourier expansions of weakly holomorphic Jacobi forms of level~$N$, 
$ J_{k,m}\wh\left( \Gamma(N) \right) $, 
\item[3.] 
the  infinite  products of the form 
$\prod_{j=1}^{\infty} \left( 1 + q^{j/N} h_j( q^{1/N}, \zeta^{1/N}) \right)$, 
where the $h_j \in \C[q, \zeta, \zeta\inv]$ are (either trivial or) 
polynomials of uniformly bounded degree; that is, there exists a $D>0$ such that for all $j$, 
$\deg_{\zeta} h_j = \max \{ |r|: \exists n \in \Q: (n,r) \in \supp(h_j) \} \le D$.  
\end{itemize}
\end{lm}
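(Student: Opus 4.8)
The plan is to verify, for each of the three kinds of series, that the support is contained in a closed convex parabolic region with recession cone $\rayA$; by the definition of $R_J(N)$ this is exactly what membership requires, so it suffices to produce constants $a \in \R_{+}$ and $b,c \in \R$ with $\supp(f) \subseteq \{(n,r) \in \R^2 : n \ge ar^2 + br + c\}$. I would treat the three items in increasing order of difficulty. Item~1 is immediate: a polar Laurent polynomial $f \in \C[\zeta^{1/N}, \zeta^{-1/N}, q^{1/N}, q^{-1/N}]$ has finite support, so $n$ is bounded below by some $n_0$ and $|r|$ by some $M$ on $\supp(f)$; taking $a=1$, $b=0$, $c = n_0 - M^2$ gives $ar^2 + br + c = r^2 + n_0 - M^2 \le n_0 \le n$ throughout the support.

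For Item~2 the point is to upgrade the weakly holomorphic condition -- which merely says $n$ is bounded below -- to the parabolic bound, and this uses the Jacobi structure rather than just the shape of the support. The elliptic transformation law for a Jacobi form of index $m$ (for level $N$, with $\lambda$ ranging over the appropriate sublattice) gives the coefficient relation $c(n,r) = c(n + \lambda r + m\lambda^2,\, r + 2m\lambda)$, under which the discriminant $D = 4mn - r^2$ is invariant. Along each such translation orbit $n = (D + r^2)/(4m) \to \infty$ as $|r| \to \infty$, and a fundamental strip for the translation $r \mapsto r + 2m\lambda$ meets only finitely many residues, in each of which the minimal $|r|$ is bounded. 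Hence $n$ bounded below on $\supp(f)$ forces $D$ bounded below, say $D \ge -N_0$, and $4mn - r^2 \ge -N_0$ rearranges to $n \ge \frac{1}{4m}r^2 - \frac{N_0}{4m}$, a parabola with $a = 1/(4m) > 0$ (the index-zero case has $r=0$ throughout and is trivial).

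Item~3 is the main obstacle. First I would check that the product lies in $R(N)$ at all: since each $h_j \in \C[q,\zeta,\zeta^{-1}]$ is a polynomial in $q$, the perturbation $q^{j/N} h_j(q^{1/N}, \zeta^{1/N})$ has $q$-order at least $j/N$, so for each fixed power of $q^{1/N}$ only finitely many factors contribute a nontrivial correction and the resulting coefficient is a finite Laurent polynomial in $\zeta^{1/N}$; thus $P = \prod_{j}\bigl(1 + q^{j/N}h_j\bigr) \in \C[\zeta^{1/N}, \zeta^{-1/N}][[q^{1/N}]] \subseteq R(N)$. For the parabolic bound I would trace how a monomial $q^n \zeta^r$ arises: it is the product of the constant $1$ from all but a finite set $S \subseteq \N$ of factors together with one monomial of $q^{j/N}h_j(q^{1/N},\zeta^{1/N})$ for each $j \in S$. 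Writing $s = |S|$, each chosen monomial contributes $q$-exponent at least $j/N$ and $\zeta$-exponent of absolute value at most $D/N$ by the uniform bound $\deg_{\zeta} h_j \le D$. Summing over $j \in S$ gives $n \ge \frac{1}{N}\sum_{j \in S} j \ge \frac{s(s+1)}{2N}$ and $|r| \le \frac{Ds}{N}$, so $s \ge N|r|/D$ and therefore $n \ge \frac{s^2}{2N} \ge \frac{N}{2D^2}\,r^2$. This is the required containment with $a = N/(2D^2) > 0$ and $b = c = 0$.

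The crux is the quadratic trade-off in Item~3: producing $\zeta$-degree $|r|$ forces at least about $N|r|/D$ factors, and because those factors have strictly increasing $q$-orders the $q$-exponent grows at least quadratically in the number of factors, hence quadratically in $|r|$. The uniform degree bound $\deg_{\zeta} h_j \le D$ is precisely what makes this work; without it the support could escape every parabola and $P$ would fail to lie in $R_J(N)$. I expect Items~1 and~2 to be routine once the elliptic transformation law is invoked, with Item~3 carrying the genuine content of the lemma.
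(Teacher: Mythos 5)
Your proof is correct and follows essentially the same route as the paper: item~1 via compact support, item~2 via the lower bound on the discriminant $4mn-r^2$ (which the paper simply cites as a property of weakly holomorphic Jacobi forms and you derive from the elliptic transformation law), and item~3 via exactly the paper's counting argument that $|r|$ forces at least $N|r|/D$ nontrivial factors whose distinct $q$-orders sum quadratically, yielding $n \ge \frac{N}{2D^2}r^2$. No gaps.
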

\begin{proof}
For item~1, 
the support $\supp(f)$ of a polynomial~$f$ is compact and so is contained in 
some convex parabolic region with recession cone~$\rayA$.  

For item~2 and positive index~$m$, 
a weakly holomorphic Jacobi form $ \phi \in J_{k,m}\wh\left( \Gamma(N) \right) $ 
has $4mn-r^2$ bounded below for $(n,r) \in \supp(\phi)$ 
and thus the Fourier expansion of $\phi$ is in $R_J(N)$.  
Index~$m=0$ is a degenerate case.  Here the Fourier expansion depends only upon $q$ and 
$\vJ(f)$ is a ray on the $q$-axis bounded from below.   

For the infinite product in item~3, consider an $(n,r)$ in the support.  
This requires at least $N \frac{|r|}{D}$ factors of the form $q^{j/N} h_j( q^{1/N}, \zeta^{1/N})$, 
which means the power of~$q$ is at least 
$\sum_{j=1}^{N|r|/D} j/N =\frac12 N \frac{|r|}{D} ( N \frac{|r|}{D}+1)\frac1{N}$;  
therefore $n \ge \frac{N}{2D^2} r^2$ and the support is contained in a 
convex parabolic region with recession cone~$\rayA$.  
\end{proof}

\begin{lm}
\label{davelemma}
For an infinite product 
$\prod_{j=1}^{\infty} \left( 1 + q^{j/N} h_j( q^{1/N}, \zeta^{1/N}) \right)$ 
as in item~3 of Lemma~\ref{crislemma}, we have
$$
\vJ\left( \prod_{j=1}^{\infty} \left( 1 + q^{j/N} h_j \right) \right)
= 
\bigcup_{m=1}^{\infty}
\vJ\left( \prod_{j=1}^{m} \left( 1 + q^{j/N} h_j \right) \right).  
$$
\end{lm}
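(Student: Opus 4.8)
The plan is to bracket the infinite product $P=\prod_{j=1}^{\infty}(1+q^{j/N}h_j)$ between its partial products $P_m=\prod_{j=1}^{m}(1+q^{j/N}h_j)$ and its tails $Q_m=\prod_{j=m+1}^{\infty}(1+q^{j/N}h_j)$. Each of these is again a product of the shape treated in item~3 of Lemma~\ref{crislemma}, so $P_m,Q_m\in R_J(N)$, as is $P=P_mQ_m$. Since every factor has constant term~$1$, so does $Q_m$, and hence $(0,0)\in\supp(Q_m)\subseteq\vJ(Q_m)$. Applying the valuation property of Theorem~\ref{Good} to $P=P_mQ_m$ gives $\vJ(P)=\vJ(P_m)+\vJ(Q_m)\supseteq\vJ(P_m)+\{(0,0)\}=\vJ(P_m)$. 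Letting $m$ range over $\N$ yields the inclusion $\bigcup_{m}\vJ(P_m)\subseteq\vJ(P)$; the same computation with $Q_m$ replaced by the single factor $1+q^{(m+1)/N}h_{m+1}$ shows $\vJ(P_m)\subseteq\vJ(P_{m+1})$, so the union is nested.

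For the reverse inclusion I would first record a stabilization of low-order coefficients. A nontrivial monomial drawn from the factor $1+q^{j/N}h_j$ carries $q$-exponent at least $j/N$, because $h_j\in\C[q,\zeta,\zeta\inv]$ has only nonnegative $q$-powers. Consequently any monomial of total $q$-exponent at most $n_0$ can use nontrivial parts only from factors with $j\le Nn_0$, so $c(n,r;P)=c(n,r;P_m)$ whenever $n\le n_0$ and $m\ge Nn_0$. In particular $\supp(P)\cap\{n\le n_0\}=\supp(P_m)\cap\{n\le n_0\}$ for all $m\ge Nn_0$.

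Now take any $x\in\vJ(P)$. By Lemma~\ref{star} the closure in the definition is redundant, so $\vJ(P)=\conv(\supp(P);\rayA)=\conv(\supp(P))+\rayA$, and I may write $x=y+d$ with $d\in\rayA$ and $y\in\conv(\supp(P))$. Carath\'eodory's theorem in $\R^2$ expresses $y$ as a convex combination of at most three points of $\supp(P)$; let $n_0$ be the largest of their $q$-coordinates. By the stabilization these points all lie in $\supp(P_m)$ as soon as $m\ge Nn_0$, so $y\in\conv(\supp(P_m))$ and $x=y+d\in\conv(\supp(P_m);\rayA)=\vJ(P_m)$, again by Lemma~\ref{star}. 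Hence $x\in\bigcup_m\vJ(P_m)$, which gives $\vJ(P)\subseteq\bigcup_m\vJ(P_m)$ and completes the argument.

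I expect the stabilization step to be the crux. It is what transfers the formal (coefficientwise) convergence of the product into genuine control of the convex geometry, and it is also what forces the a priori merely increasing union of closed convex sets to coincide with the single closed set $\vJ(P)$ guaranteed by Lemma~\ref{star}. Once stabilization is in hand, the Carath\'eodory reduction is routine, since at any moment only finitely many extreme support points are involved and each is captured by a sufficiently long partial product.
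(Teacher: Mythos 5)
Your proof is correct and takes essentially the same route as the paper's: the inclusion $\supseteq$ via the tail product, the observation that $(0,0)$ lies in its valuation, and the valuation property of Theorem~\ref{Good} is identical to the paper's argument, and the inclusion $\subseteq$ rests on the same key observation that factors with $j>Nn$ cannot contribute to a $q^{n}\zeta^{r}$ term. The only difference is in the last convexity step of $\subseteq$: where you use Carath\'eodory to trap each point of $\conv\left(\supp(f)\right)$ inside the convex hull of the support of a single partial product, the paper instead notes that the sets $\vJ\left(\prod_{j=1}^{m}(1+q^{j/N}h_j)\right)$ are nested with common recession cone $\rayA$, so their union is already convex, stable under adding $\rayA$, and hence equal to its own $\conv(\,\cdot\,;\rayA)$ --- a purely cosmetic variation.
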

\begin{proof}
To prove ``$\supseteq$,'' 
take any $m\in\N$ and 
any $I\in \vJ(\prod_{j=1}^m(1 + q^{j/N} h_j )$.
Then because $0\in\vJ(\prod_{j=m+1}^\infty(1 + q^{j/N} h_j))$,
we have
$$I\in\vJ(\prod_{j=1}^m(1 + q^{j/N} h_j))+
\vJ(\prod_{j=m+1}^\infty(1 + q^{j/N} h_j)),$$
which by Theorem~\ref{Good} and Lemma~\ref{crislemma}, items~1 and~3, 
implies $I\in\vJ(\prod_{j=1}^\infty( 1 + q^{j/N} h_j))$.

Next we prove ``$\subseteq$''.
Take any $(n,r)\in\supp(\prod_{j=1}^\infty(1 + q^{j/N} h_j))$.
Then it must be that 
$(n,r)\in\supp(\prod_{j=1}^{Nn}(1 + q^{j/N} h_j))$
since the higher factors cannot contribute to a $q^n \zeta^r$ term.  Thus 
$$
\supp\left( \prod_{j=1}^{\infty} \left( 1 + q^{j/N} h_j \right) \right) 
\subseteq 
\bigcup_{m=1}^{\infty}
\vJ\left( \prod_{j=1}^{m} \left( 1 + q^{j/N} h_j \right) \right)
$$
and so 
\begin{align*}
\vJ\left( \prod_{j=1}^{\infty} \left( 1 + q^{j/N} h_j \right) \right) 
&\subseteq 
\conv \left( \bigcup_{m=1}^{\infty}
\vJ\left( \prod_{j=1}^{m} \left( 1 + q^{j/N} h_j \right) \right); \rayA \right)  \\ 
&=\bigcup_{m=1}^{\infty}
\vJ\left( \prod_{j=1}^{m} \left( 1 + q^{j/N} h_j \right) \right)
\end{align*}
since the valuation property for $\vJ$ on Laurent polynomials shows, as above, that the 
$\vJ\left( \prod_{j=1}^{m} \left( 1 + q^{j/N} h_j \right) \right)$ are nested and since each has recession cone $\rayA$.
\end{proof}
\smallskip

We conclude this section with a few remarks and some notation.  
Let $\FS(\phi)$ denote the Fourier series of a weakly holomorphic Jacobi form 
and write $\vJ(\phi)$ for $\vJ( \FS(\phi))$.  
For a $ \phi \in J_{k,m}\wh$, the generalized valuation $\ord$ defined in \cite{GSZ} 
by 
$$\ord(\phi;x)= \min_{ (n,r) \in \supp(\phi) } (mx^2 + rx + n)$$  
is related to $\vJ(\phi)$ by 
$\ord(\phi;x)= \min  \< \vJ(\phi), \mymat{1}xx{x^2} \>$.  
This gives a variant proof of the valuation property of $\ord$ proven in \cite{GSZ}.  

For $ \phi \in J_{k,m}^\cusp$, 
let $(\tau_o,z_o) \in \Half_1 \times \C$ be the point where the invariant function 
$v^{k/2} e^{-2 \pi m y^2/v } | \phi(u+iv, x+iy) |$ 
attains its maximum.  
One can use the techniques of \cite{PY00} to prove that 
$\ord(\phi; x) \le \frac{k}{4 \pi} \frac{1}{v_o}+ m (x- \frac{y_o}{v_o}  )^2$.  

\section{Theta Blocks}  

Theta blocks are the invention of V. Gritsenko, N.-P. Skoruppa and D. Zagier, see \cite{GSZ} 
for a full treatment.  We will only cite the properties we need here.  
A {\it theta block\/} is a function of the form 
$$
\eta^{f(0)} \prod_{\ell \in \N} \left( \frac{ \vartheta_{\ell} }{ \eta } \right)^{f(\ell)}
$$
for some sequence $f: \N \cup \{0\} \to \Z$ with finite support.  
Reference to Theorem~\ref{BP} shows that theta blocks arise naturally 
as the leading Fourier Jacobi coefficient of a paramodular Borcherds Product.  

A theta block is a meromorphic Jacobi form with easily calculable 
weight, index, multiplier and divisor.  The generalized valuation~$\ord$, 
introduced in \cite{GSZ}, is also simple to calculate on theta blocks.  
Let $\GG=C^0(\R/ \Z)\pq$ be the additive group of continuous functions 
$g: \R \to \R$ that have period one and are piecewise quadratic.  
Define the positive (non-negative) elements in $\GG$ to be the semigroup of functions 
whose values are all positive (non-negative) in $\R$; this makes $\GG$ a partially 
ordered abelian group.  

For $\phi \in J_{k,m}(\chi)\wh$ and $x \in \R$ define
$$
\ord(\phi;x)= \min_{ (n,r) \in \supp(\phi) } (n+rx + mx^2).
$$
Then $\ord:   J_{k,m}(\chi)\wh \to \GG$, defined by $\phi \mapsto \ord(\phi)$ 
is a generalized valuation in the sense that it satisfies  
$$
\ord(\phi_1 \phi_2) = \ord(\phi_1) + \ord(\phi_2)
$$
on the ring of all weakly holomorphic Jacobi forms and 
$$
\ord(\phi_1+ \phi_2) \ge  \min\left(  \ord(\phi_1) , \ord(\phi_2)  \right)
$$
on each graded piece of fixed weight,  index and multiplier.  
The generalized valuation characterizes Jacobi forms from among weakly holomorphic Jacobi forms 
because a weakly holomorphic Jacobi form~$\phi$ is a Jacobi form if and only if 
$\ord(\phi) \ge 0$, and is a Jacobi cusp form if and only if  $\ord(\phi) > 0$.  
One can easily test to see whether a theta block~$\phi$  is a Jacobi form by checking the 
positivity of $\ord(\phi)$, which has the following pleasant formula  \cite{GSZ} 
$$
\forall x \in \R, \ 
\ord \left( \eta^{f(0)} \prod_{\ell \in \N} \left( \frac{ \vartheta_{\ell} }{ \eta } \right)^{f(\ell)} ; x \right) = 
\frac{k}{12} + \frac12 \sum_{\ell \in \N}  f(\ell)  {\bar B}_2(\ell x), 
$$
where $ {\bar B}_2(x) $ is 
the periodic extension of the the 
second Bernoulli polynomial, normalized in the traditional way, $B_2(x)=x^2-x+\frac16$, and 
$$
{\bar B}_2(x) = B_2( x -\ldL x \rdL )  =
\sum_{n=1}^{\infty}\, \frac{ \cos( 2n \pi x )}{(n\pi)^2}.  
$$
The following Theorem, stated only for theta blocks without theta denominator, 
suffices for our needs.  

\begin{thm} [Gritsenko, Skoruppa, Zagier]
Let $\ell,m\in\N$ and let $u,k \in \Z$.  Select 
$ {\bf{d}}=(d_1, \dots,d_{\ell})\in\N^{\ell}$.  
Define a meromorphic function 
$\TB(u;  {\bf{d}}):\SiegelH_1\times\C\to\C$ by 
$$
\TB(u; {\bf{d}})(\tau,z)=
\eta(\tau)^u\,
\prod_{i=1}^{\ell}\, \vartheta(\tau, d_iz).  
$$
We have $\TB(u; {\bf{d}})\in J_{k,m}^{ \text{\rm cusp}}$ (respectively  $ J_{k,m}$) if and only if
\begin{itemize}
\item $2k=\ell+u$,  
\item $2m= \sum_{i=1}^{\ell} d_i^2$,
\item $u+3\ell \equiv 0 \mod 24$,    
\item   $\frac{k}{12} + \frac12 \sum_{i=1}^{\ell} {\bar B}_2(d_ix)$ 
has a positive (respectively nonnegative) minimum on $[0,1]$.  
\end{itemize}
\end{thm}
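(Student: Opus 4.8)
The plan is to reduce the statement to two ingredients already available in the excerpt: the multiplicativity of the basic invariants (weight, index, multiplier) of Jacobi forms under products, and the formula for $\ord$ of a theta block together with the characterization that a $\phi\in J_{k,m}\wh$ lies in $J_{k,m}$ exactly when $\ord(\phi)\ge 0$, and in $J_{k,m}^{\cusp}$ exactly when $\ord(\phi)>0$. First I would note that since $\eta$ never vanishes on $\Half_1$, the function $\TB(u;\mathbf{d})=\eta^u\prod_{i=1}^\ell\vartheta_{d_i}$ is holomorphic on $\Half_1\times\C$ for \emph{every} $u\in\Z$ (positive or negative) and is not identically zero; moreover each factor has $q$-support bounded below, so the product always lies in some $J_{k,m}\wh(\epsilon^a v_H^b)$.

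Second, I would read off the invariants multiplicatively from the basic data $\eta\in J_{1/2,0}^{\cusp}(\epsilon)$ and $\vartheta_{d_i}\in J_{1/2,d_i^2/2}^{\cusp}(\epsilon^3 v_H^{d_i})$ recorded in Section~3: since $\widetilde{\phi_1\phi_2}=\tilde\phi_1\tilde\phi_2$, weight and index add and the multiplier multiplies. Thus the product has weight $(u+\ell)/2$, index $\tfrac12\sum d_i^2$, and multiplier $\epsilon^{u+3\ell}v_H^{\sum d_i}$. As a nonzero Jacobi form has a unique weight, index and character, membership in $J_{k,m}$ (trivial character) forces exactly $2k=u+\ell$, $2m=\sum d_i^2$, and triviality of $\epsilon^{u+3\ell}v_H^{\sum d_i}$; conversely these produce the correct transformation law. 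Since $\epsilon$ has order $24$, triviality of the $\epsilon$-part is precisely $u+3\ell\equiv 0\pmod{24}$, the third listed condition. The key small observation is that the $v_H$-part is \emph{automatically} trivial: because $d_i^2\equiv d_i\pmod 2$ we get $\sum d_i\equiv\sum d_i^2=2m\equiv 0\pmod 2$, so $v_H^{\sum d_i}=1$ whenever the index condition holds. This explains why no separate parity congruence appears among the hypotheses.

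Third, assuming the first three conditions, $\TB(u;\mathbf{d})$ is a weakly holomorphic Jacobi form of weight $k$, index $m$ and trivial character, so the quoted characterization applies verbatim. Substituting $f(j)=\#\{i:d_i=j\}$ into the theta-block formula
$$
\ord\Bigl(\eta^{f(0)}\prod_{j\in\N}(\vartheta_j/\eta)^{f(j)};x\Bigr)=\tfrac{k}{12}+\tfrac12\sum_{j\in\N}f(j)\,\bar B_2(jx)
$$
collapses the sum to $\tfrac{k}{12}+\tfrac12\sum_{i=1}^\ell\bar B_2(d_i x)$. Hence $\TB(u;\mathbf{d})\in J_{k,m}$ iff this periodic function has nonnegative minimum on $[0,1]$, and lies in $J_{k,m}^{\cusp}$ iff the minimum is strictly positive, which is the fourth condition. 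Both directions of the equivalence then follow immediately, the forward direction because a genuine Jacobi (cusp) form must have $\ord\ge 0$ (respectively $>0$).

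I expect the only genuinely non-routine points to be the parity observation that makes the $v_H$-character disappear, and the careful handling of the ``if and only if'' at the level of the multiplier, namely that uniqueness of weight, index and character for a nonzero form makes conditions~1--3 \emph{necessary}, not merely sufficient. Everything else is assembling the multiplicativity of the factor of automorphy and invoking the already-stated $\ord$-formula and the valuation characterization of (cusp) Jacobi forms.
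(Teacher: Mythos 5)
Your argument is sound, but there is nothing in the paper to compare it against: the paper states this result as a theorem of Gritsenko--Skoruppa--Zagier and cites the manuscript \cite{GSZ} without giving any proof. What you have written is a correct assembly of the theorem from the other GSZ facts the paper does quote, namely the formula $\ord(\eta^{f(0)}\prod(\vartheta_\ell/\eta)^{f(\ell)};x)=\frac{k}{12}+\frac12\sum f(\ell)\bar B_2(\ell x)$ and the characterization that a weakly holomorphic form lies in $J_{k,m}$ (resp.\ $J_{k,m}^{\cusp}$) iff $\ord\ge 0$ (resp.\ $>0$) --- the latter being elementary here since the paper \emph{defines} $J_{k,m}$ by the support condition $4mn-r^2\ge 0$, which is exactly $\min_{x\in\R}\ord(\phi;x)\ge 0$. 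Your two ``non-routine'' points are handled correctly: the independence of $\epsilon$ and $v_H$ (one trivial on $H(\Z)$, the other on the embedded $\SL_2(\Z)$) makes the multiplier computation an honest if-and-only-if, and the observation $\sum d_i\equiv\sum d_i^2=2m\equiv 0\pmod 2$ correctly explains why no $v_H$-parity condition appears in the statement. The only thing to be aware of is that your proof is a reduction to quoted results rather than an independent proof of the deepest ingredient (the $\ord$-formula for theta blocks), but within the framework of this paper that is all that can reasonably be asked.
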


For such a theta block,  we have the infinite product 
\begin{equation}\label{TBformula}
\begin{aligned}
\TB(u;d_1,\ldots,d_\ell)
=
q^{v} 
&
\prod_{j \in \N} (1-q^j)^{2k} 
\prod_{i=1}^\ell\left( \zeta^{\frac12d_i}- \zeta^{-\frac12d_i} \right) \\
&
\cdot\prod_{i=1}^\ell \prod_{j \in \N}
(1 - q^j\zeta^{d_i})(1-q^j\zeta^{-d_i})
\end{aligned}
\end{equation}
where
$v=\tfrac{u+3\ell}{24}$.

\section{Proof of Main Theorem}  

Beginning with a theta block $\phi \in J_{k,t}\wh$, 
$$
\phi(\tau,z)= 
\TB(u; {d_1,\ldots,d_\ell})(\tau,z)=
\eta(\tau)^u\,
\prod_{i=1}^{\ell}\, \vartheta(\tau, d_iz), 
$$
with order of vanishing in~$q$ equal to~$v$, 
we wish to investigate 
$\psi = (-1)^{v} \frac{ \phi | V_2 }{ \phi }$.  
It is clear that $\psi$ transforms like a Jacobi form of weight~zero 
with the same index as $\phi$; 
we will show that $\psi$ is weakly holomorphic, not just meromorphic.  
For prime $\ell$ we have
$$
c( n,r; \phi|V_\ell) =
c( \ell n, r; \phi) + \ell^{k-1} c( n/\ell, r/\ell; \phi),
$$
and for $\ell=2$, 
we have 
$$(\phi|V_2)(\tau,z)
= 2^{k-1}  \phi(2\tau,2z)+
\frac 12
\left(\phi(\tfrac12\tau,z)+\phi(\tfrac12\tau+\tfrac12,z) \right).
$$
The following equation for $\psi = (-1)^{v} \frac{ \phi | V_2 }{ \phi }$ 
will often be cited.    

\begin{prop}\label{psiprop}  
Let $u,v, k \in \Z$ and $\ell, d_1, \dots, d_{\ell} \in \N$.  
Take a theta block 
$\phi=\TB(u;d_1,\ldots,d_\ell)$ of order
 $v=\tfrac{u+3\ell}{24}$ and weight $k=\frac{\ell+u}{2}$.  
For $\psi=(-1)^v \frac{\phi|V_2}\phi$, we have 
\begin{align*}
&\psi(\tau,z)=  
(-1)^v
2^{k-1}\frac{\phi(2\tau,2z)}{\phi(\tau,z)}
+(-1)^v
\frac 12
\left( \frac{\phi(\tfrac12\tau,z)}{\phi(\tau,z)}+ \frac{\phi(\tfrac12\tau+\tfrac12,z)}{\phi(\tau,z)} \right)   =   \\
&(-1)^v 2^{k-1}q^{v} 
\prod_{j \in \N} (1{+}q^j)^{2k} 
\prod_{i=1}^\ell\left( \zeta^{\frac12d_i}{+} \zeta^{-\frac12d_i} \right)    
\prod_{i=1}^\ell \prod_{j \in \N}
(1 {+} q^j\zeta^{d_i})(1+q^j\zeta^{-d_i})
\\
&
+
\tfrac12q^{-\frac12v}\left(
(-1)^v
\prod_{2\notdivides j} (1-q^{\frac j 2})^{2k} 
\prod_{i=1}^\ell \prod_{2\notdivides j}
(1 -q^{\frac j 2}\zeta^{d_i})
(1-q^{\frac j 2}\zeta^{-d_i})\right.
\\&
\qquad\quad\left.
+\prod_{2\notdivides j} (1+ q^{\frac j 2})^{2k} 
\prod_{i=1}^\ell \prod_{2\notdivides j}
(1 +  q^{\frac j 2}\zeta^{d_i})
(1+q^{\frac j 2}\zeta^{-d_i})
\right) .
\end{align*}
\end{prop}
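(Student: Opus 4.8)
The plan is to substitute the product expansion~(\ref{TBformula}) for $\phi$ into the explicit $V_2$ formula
$$(\phi|V_2)(\tau,z)=2^{k-1}\phi(2\tau,2z)+\tfrac12\bigl(\phi(\tfrac12\tau,z)+\phi(\tfrac12\tau+\tfrac12,z)\bigr)$$
recorded just above the proposition, then divide through by $\phi(\tau,z)$ and multiply by $(-1)^v$. Each of the three summands becomes a quotient of two infinite products of the same shape, so the entire claim reduces to simplifying these three quotients. Since the products in~(\ref{TBformula}) converge absolutely for $|q|<1$ with $\zeta$ in a suitable annulus, I may rearrange and cancel factors term by term without analytic worry. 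The prefactor power $q^{v}$ and the parity $u+3\ell\equiv0\bmod 24$ guarantee $v\in\Z$, so that $(-1)^{-v}=(-1)^v$ throughout.

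First I would treat the dilation term $\phi(2\tau,2z)/\phi(\tau,z)$. Substituting $q\mapsto q^2$ and $\zeta\mapsto\zeta^2$ into~(\ref{TBformula}), every factor of the numerator is the ``square'' of the matching factor in the denominator, so the identity $1-X^2=(1-X)(1+X)$ turns each minus into a plus: one finds $\prod_{j}(1-q^{2j})^{2k}/\prod_{j}(1-q^j)^{2k}=\prod_{j}(1+q^j)^{2k}$, the ratio $(\zeta^{d_i}-\zeta^{-d_i})/(\zeta^{d_i/2}-\zeta^{-d_i/2})=\zeta^{d_i/2}+\zeta^{-d_i/2}$, and likewise $(1-q^{2j}\zeta^{2d_i})(1-q^{2j}\zeta^{-2d_i})/\bigl((1-q^j\zeta^{d_i})(1-q^j\zeta^{-d_i})\bigr)=(1+q^j\zeta^{d_i})(1+q^j\zeta^{-d_i})$. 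With the surviving power $q^{2v-v}=q^v$ this reproduces exactly the first displayed term after multiplying by $(-1)^v 2^{k-1}$.

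Next I would handle the two half-period terms. For $\phi(\tfrac12\tau,z)$ the substitution is $q\mapsto q^{1/2}$ with $\zeta$ fixed, so the $\zeta$-only factors cancel against the denominator and I am left with quotients such as $\prod_{j}(1-q^{j/2})^{2k}/\prod_{j}(1-q^j)^{2k}$. Splitting the numerator over $j$ into even and odd indices, the even part $j=2m$ reproduces $\prod_m(1-q^m)^{2k}$ and cancels the denominator, leaving only the odd-index product $\prod_{2\notdivides j}(1-q^{j/2})^{2k}$; the same splitting applied to the double product leaves $\prod_i\prod_{2\notdivides j}(1-q^{j/2}\zeta^{d_i})(1-q^{j/2}\zeta^{-d_i})$. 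Together with the leftover power $q^{-v/2}$ this is the first summand inside the parentheses. The term $\phi(\tfrac12\tau+\tfrac12,z)$ is obtained from $\phi(\tfrac12\tau,z)$ by the further substitution $q^{1/2}\mapsto-q^{1/2}$, since $e(\tfrac12(\tau+1))=-q^{1/2}$; because only odd half-powers of $q$ survive, each $1-q^{j/2}$ becomes $1+q^{j/2}$, while the prefactor $q^{-v/2}=(q^{1/2})^{-v}$ picks up the sign $(-1)^v$.

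The computation is essentially bookkeeping, and the single point demanding care is this last sign accounting. Assembling the two half-period contributions with their common factor $(-1)^v\cdot\tfrac12$, the $(-1)^v$ generated by the shift meets the overall $(-1)^v$ to give $(-1)^{2v}=1$ on the all-plus product, whereas the plain $\phi(\tfrac12\tau,z)/\phi(\tau,z)$ term retains its $(-1)^v$; this is precisely the asymmetric pair of signs appearing in the stated formula. The main (mild) obstacle is therefore keeping the even/odd splitting of the $q^{j/2}$-products and the parity-induced signs consistent, rather than any genuine difficulty.
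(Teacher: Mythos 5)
Your proposal is correct and is exactly the computation the paper summarizes as ``This formula follows from Equation~(\ref{TBformula}) and algebra'': you expand the three terms of the explicit $V_2$ formula via the product expansion, cancel using $1-X^2=(1-X)(1+X)$ and the even/odd split of the $q^{j/2}$ products, and track the sign $(-1)^v$ coming from $e(\tfrac12\tau+\tfrac12)=-q^{1/2}$. The sign bookkeeping in your last paragraph matches the asymmetric pair of signs in the stated formula, so nothing is missing.
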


\begin{proof}
This formula follows from Equation (\ref{TBformula})
and  algebra.
\end{proof}

\begin{cor}
\label{psiwh} 
Let $u,v \in \Z$ and $k,m,\ell, d_1, \dots, d_{\ell} \in \N$.  
Let $\phi=\TB(u;d_1,\ldots,d_\ell)$ be a theta block of order
 $v=\tfrac{u+3\ell}{24}$ and index~$m$.  
Then $\psi=(-1)^v \frac{\phi|V_2}\phi \in J_{0,m}\wh$ 
and $\psi$ has integral Fourier coefficients.  
\end{cor}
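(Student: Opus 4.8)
The plan is to extract everything from the explicit expansion of $\psi$ recorded in Proposition~\ref{psiprop} and to verify the three assertions separately: the correct transformation law, weak holomorphy, and integrality of the Fourier coefficients. The transformation part is immediate, since $\phi|V_2\in J_{k,2m}\wh$ and $\phi\in J_{k,m}\wh$ share the weight~$k$, so the quotient $\psi$ transforms like a Jacobi form of weight~$0$ and index $2m-m=m$, as already noted in the text.

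For weak holomorphy I would show that the Fourier series of $\psi$ lies in the ring $R_J(2)$. Each infinite product occurring in Proposition~\ref{psiprop}---the integral product $\prod_{j}(1+q^j)^{2k}\prod_{i,j}(1+q^j\zeta^{d_i})(1+q^j\zeta^{-d_i})$ and its two half-integral companions $\prod_{2\notdivides j}(1\mp q^{j/2})^{2k}\prod_i\prod_{2\notdivides j}(1\mp q^{j/2}\zeta^{d_i})(1\mp q^{j/2}\zeta^{-d_i})$---is, after grouping factors by the power of $q^{1/2}$ they carry, of the shape $\prod(1+q^{j/2}h_j)$ with the $h_j$ of $\zeta$-degree uniformly bounded by $\sum_i d_i$. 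Hence item~3 of Lemma~\ref{crislemma} puts each product in $R_J(2)$; multiplying by the monomial prefactors $q^{v}$, $q^{-v/2}$ and $\prod_i(\zeta^{d_i/2}+\zeta^{-d_i/2})$ (item~1) and taking the finite sum stays inside $R_J(2)$, because $R_J(2)$ is a ring. Membership in $R_J(2)$ forces a support lying in a parabolic region with recession cone~$\rayA$; in particular the $q$-order of each of the finitely many summands is bounded below (one reads off $n\ge\min(v,-v/2)$), and for each fixed~$n$ only finitely many~$r$ occur, so $\psi$ is genuinely weakly holomorphic. I would then dispatch the $\zeta$-exponents: in the first summand they come from $\prod_i(\zeta^{d_i/2}+\zeta^{-d_i/2})$ and equal $\tfrac12\sum_i\epsilon_i d_i$ with $\epsilon_i=\pm1$, whose numerator is $\equiv\sum_i d_i\pmod 2$. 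This is exactly where $m\in\N$ enters: from $2m=\sum_i d_i^2$ an even number of the $d_i$ are odd, so $\sum_i d_i$ is even and these exponents are integral; in the second summand the $\zeta$-powers are manifestly multiples of the $d_i$.

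The delicate point, and the step I expect to be the main obstacle, is the prefactor $\tfrac12 q^{-v/2}$ in front of the bracketed difference of products, which a priori produces half-integral powers of~$q$ and a denominator of~$2$. Writing the first bracketed product as $A=\sum_{s\ge0}P_s(\zeta)\,q^{s/2}$ with $P_s\in\Z[\zeta,\zeta^{-1}]$, the second bracketed product $B$ is precisely $A$ under $q^{1/2}\mapsto-q^{1/2}$, i.e. $B=\sum_s(-1)^sP_s(\zeta)q^{s/2}$, so that
\[
(-1)^vA+B=\sum_{s\ge0}\bigl((-1)^v+(-1)^s\bigr)P_s(\zeta)\,q^{s/2}=2(-1)^v\sum_{s\equiv v\,(2)}P_s(\zeta)\,q^{s/2}.
\]
The factor~$2$ cancels the $\tfrac12$, and $\tfrac12 q^{-v/2}\bigl((-1)^vA+B\bigr)=(-1)^v\sum_{s\equiv v\,(2)}P_s(\zeta)\,q^{(s-v)/2}$ has integral coefficients and, because only $s\equiv v\pmod 2$ survive, integral powers of~$q$. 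Combined with the first summand, whose coefficients are integers (here $2^{k-1}\in\Z$ since $k\ge1$) and whose $q$-powers are at least~$v$, this shows every Fourier coefficient of $\psi$ lies in~$\Z$, completing the proof that $\psi\in J_{0,m}\wh$ with integral coefficients.
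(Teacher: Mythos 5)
Your proof is correct and follows the same route the paper intends: the paper states Corollary~\ref{psiwh} with no written proof, leaving it as a direct reading of the explicit expansion in Proposition~\ref{psiprop}, and your argument supplies exactly those details. In particular, your key observation that $(-1)^vA+B=2(-1)^v\sum_{s\equiv v\,(2)}P_s(\zeta)q^{s/2}$ (killing both the $\tfrac12$ and the half-integral $q$-powers) is precisely the mechanism the authors invoke implicitly and allude to later when they note that the singular part is a series of the form $f(q^{1/2},\zeta)\pm f(-q^{1/2},\zeta)$.
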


Now we examine the support of~$\psi$.  
The first term in the equation of Proposition~\ref{psiprop} involves 
$\frac{\phi(2\tau,2z)}{\phi(\tau,z)}$ and we consider it separately.

\begin{lm}\label{lm3A}
Let $\phi=\TB(u;d_1,\ldots,d_\ell)$ be a theta block.
Then
$$\vJ(\frac{\phi(2\tau,2z)}{\phi(\tau,z)})
=\vJ(\phi).
$$
\end{lm}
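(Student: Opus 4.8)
The plan is to read $\vJ$ directly off the product expansions and to exploit that passing from $\phi$ to $\phi(2\tau,2z)/\phi(\tau,z)$ merely flips the signs of the coefficients inside each factor. From the theta block product~(\ref{TBformula}) and the first summand of Proposition~\ref{psiprop} (equivalently, by substituting $q\mapsto q^2$, $\zeta\mapsto\zeta^2$ in~(\ref{TBformula}) and using $1-q^{2j}=(1-q^j)(1+q^j)$ and $\zeta^{d_i}-\zeta^{-d_i}=(\zeta^{d_i/2}-\zeta^{-d_i/2})(\zeta^{d_i/2}+\zeta^{-d_i/2})$), one has the exact identities
\begin{align*}
\phi &= q^{v}\prod_{j\in\N}(1-q^j)^{2k}\prod_{i=1}^{\ell}\bigl(\zeta^{\frac12 d_i}-\zeta^{-\frac12 d_i}\bigr)\prod_{i=1}^{\ell}\prod_{j\in\N}(1-q^j\zeta^{d_i})(1-q^j\zeta^{-d_i}),\\
\frac{\phi(2\tau,2z)}{\phi(\tau,z)} &= q^{v}\prod_{j\in\N}(1+q^j)^{2k}\prod_{i=1}^{\ell}\bigl(\zeta^{\frac12 d_i}+\zeta^{-\frac12 d_i}\bigr)\prod_{i=1}^{\ell}\prod_{j\in\N}(1+q^j\zeta^{d_i})(1+q^j\zeta^{-d_i}),
\end{align*}
which agree factor for factor save for the internal signs.

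First I would note that all factors are nonzero elements of a common ring $R_J(N)$, by Lemma~\ref{crislemma}, items~1 and~3. The crucial point is that flipping a sign does not alter the support of any single factor: each binomial $1\pm q^j\zeta^{\pm d_i}$ has the two-element support $\{(0,0),(j,\pm d_i)\}$, each factor $\zeta^{d_i/2}\pm\zeta^{-d_i/2}$ has support $\{(0,\pm d_i/2)\}$, and $(1\pm q^j)^{2k}$ has support $\{(ij,0):0\le i\le 2k\}$; in every case the coefficients are nonzero and no cancellation occurs within the factor. Hence each factor of $\phi$ has the same support, and therefore the same $\vJ$, as its counterpart in $\phi(2\tau,2z)/\phi(\tau,z)$.

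Next I would assemble the factors. For the monomial $q^v$ and for the finite products $\prod_i(\zeta^{d_i/2}\pm\zeta^{-d_i/2})$, the valuation property of Theorem~\ref{Good} expresses $\vJ$ of the product as the Minkowski sum of the $\vJ$ of its factors, so these contribute identically to both expressions. For the infinite tail $\prod_{j\in\N}(1\pm q^j)^{2k}\prod_i(1\pm q^j\zeta^{d_i})(1\pm q^j\zeta^{-d_i})$, which has the form of item~3 of Lemma~\ref{crislemma}, I would apply Lemma~\ref{davelemma}: its $\vJ$ is the nested union of the $\vJ$ of its partial products, and each partial product is a Laurent polynomial whose $\vJ$ is, by Theorem~\ref{Good}, the Minkowski sum of its factors' valuations. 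Since all these factor valuations are sign-independent, the $+$ and $-$ tails produce the same nested union and hence the same $\vJ$. One final application of Theorem~\ref{Good} combines the finite part with the tail to give $\vJ(\phi(2\tau,2z)/\phi(\tau,z))=\vJ(\phi)$.

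The step that needs care, and the reason for routing everything through the valuation property and Lemma~\ref{davelemma}, is the infinite product: one cannot simply equate the supports of the $+$ and $-$ tails, since cancellation across distinct factors may behave differently for the two sign patterns. A tempting shortcut---observing that $\supp(\phi(2\tau,2z))=2\supp(\phi)$ forces $\vJ(\phi(2\tau,2z))=2\vJ(\phi)$ and then cancelling $\vJ(\phi)$ from $\vJ(\phi(2\tau,2z)/\phi)+\vJ(\phi)=2\vJ(\phi)$---is blocked, because Minkowski cancellation is invalid for the unbounded sets at hand. The factor-by-factor comparison above circumvents both difficulties, since within a single factor no cancellation can occur and the sign flip is invisible to $\vJ$.
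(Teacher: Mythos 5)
Your proof is correct and follows essentially the same route as the paper's: both arguments rest on the observation that the sign flip leaves the support (hence $\vJ$) of each individual factor unchanged, then pass the valuation through the infinite tail via Lemma~\ref{davelemma} and the valuation property of Theorem~\ref{Good} on the Laurent-polynomial partial products, and finally adjoin the finite prefactor. Your closing remark on why Minkowski cancellation of the unbounded sets is not available is a sound justification for this factor-by-factor strategy, but it does not alter the substance of the argument.
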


\begin{proof}
First we note the equality of the Jacobi valuations of the {\sl atoms:\/}    
$\vJ(1-q^j\zeta^d)=\vJ(1+q^j\zeta^d)$.  
We use Proposition~\ref{psiprop} to write 
\begin{multline*}
\vJ\left(
\prod_{j \in \N} 
\left(
(1+q^j)^{2k}  
\prod_{i=1}^\ell
(1 + q^j\zeta^{d_i})(1+q^j\zeta^{-d_i})
\right)
\right)\\
\begin{aligned}
&=
\bigcup_{m=1}^\infty
\vJ\left(
\prod_{j=1}^m 
\left(
(1+q^j)^{2k}  
\prod_{i=1}^\ell
(1 + q^j\zeta^{d_i})(1+q^j\zeta^{-d_i})
\right)\right)  
\\
&=
\bigcup_{m=1}^\infty
\vJ\left(
\prod_{j=1}^m 
\left(
(1-q^j)^{2k}  
\prod_{i=1}^\ell
(1 -q^j\zeta^{d_i})(1-q^j\zeta^{-d_i})
\right)\right)
\\
&=
\vJ\left(
\prod_{j \in \N} 
\left(
(1-q^j)^{2k}  
\prod_{i=1}^\ell
(1 - q^j\zeta^{d_i})(1-q^j\zeta^{-d_i})
\right)\right) .  
\end{aligned}
\end{multline*}
The first and third equalities  follow from Lemma~\ref{davelemma}.  
The second equality follows from the valuation property from Theorem~\ref{Good} 
on Laurent polynomials and 
the equality of the valuations of the atoms. 
Adding $\vJ\left(q^v\prod_{i=1}^\ell ( \zeta^{\frac12d_i}+ \zeta^{-\frac12d_i}  ) \right)=
\vJ \left(q^v\prod_{i=1}^\ell ( \zeta^{\frac12d_i}- \zeta^{-\frac12d_i} ) \right)$ to the left and right hand sides, 
in $R_J(24)$ say, 
and applying Theorem \ref{Good} completes the proof.
\end{proof}

We use some combinatorial lemmas,  
set $ \LA=\{ -\ell, \dots,-1,1,\dots,\ell \}$.   

\begin{lm}\label{combLemma1}
Let $d_1,\ldots,d_\ell\in\N$.  
Set $e_i = \sgn(i) d_{ | i| }$ for  
$i \in \LA$.
Fix $a\in\N$.
Then we have
$$\sum_{m_1,\ldots,m_a \in \LA:\, -\ell \le m_1<\ldots<m_a \le \ell}
(e_{m_1}+\cdots+e_{m_a})^2
=
\binomial{2\ell-2}{a-1}
\sum_{i \in \LA}
e_{i}^2.
$$
\end{lm}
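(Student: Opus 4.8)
The plan is to expand the square inside the sum and reduce the whole identity to two elementary ingredients: a subset-counting argument and the sign symmetry $e_{-i}=-e_i$. Writing $S=\{m_1,\dots,m_a\}$ for a typical $a$-element subset of $\LA$ (a set of cardinality $2\ell$), I would first expand, for each such $S$,
$$
\Bigl(\sum_{i\in S}e_i\Bigr)^2=\sum_{i\in S}e_i^2+\sum_{i,j\in S,\ i\ne j}e_ie_j,
$$
and then sum over all $a$-subsets $S$, interchanging the order of summation so that the outer sum runs over elements (respectively ordered pairs) of $\LA$ and the inner sum counts the subsets containing them.

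For the diagonal contribution, each fixed $i\in\LA$ lies in exactly $\binom{2\ell-1}{a-1}$ of the $a$-subsets, giving $\sum_{S}\sum_{i\in S}e_i^2=\binom{2\ell-1}{a-1}\sum_{i\in\LA}e_i^2$. For the off-diagonal contribution, each fixed ordered pair $(i,j)$ of distinct elements lies in exactly $\binom{2\ell-2}{a-2}$ subsets, giving $\sum_{S}\sum_{i,j\in S,\ i\ne j}e_ie_j=\binom{2\ell-2}{a-2}\sum_{i,j\in\LA,\ i\ne j}e_ie_j$. The \emph{crucial step} is then to evaluate this last cross-term sum. Because $e_{-i}=-\sgn(i)d_{|i|}\cdot(-1)=-e_i$, the total $\sum_{i\in\LA}e_i$ cancels in pairs and equals $0$; squaring the relation $\sum_{i\in\LA}e_i=0$ yields
$$
0=\sum_{i\in\LA}e_i^2+\sum_{i,j\in\LA,\ i\ne j}e_ie_j,
$$
so that $\sum_{i,j\in\LA,\ i\ne j}e_ie_j=-\sum_{i\in\LA}e_i^2$.

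Substituting this back, the left-hand side of the lemma equals $\bigl(\binom{2\ell-1}{a-1}-\binom{2\ell-2}{a-2}\bigr)\sum_{i\in\LA}e_i^2$, and Pascal's rule $\binom{2\ell-1}{a-1}=\binom{2\ell-2}{a-1}+\binom{2\ell-2}{a-2}$ collapses the coefficient to $\binom{2\ell-2}{a-1}$, which is exactly the asserted identity. The argument is genuinely elementary, so I do not anticipate a serious obstacle; the only point demanding care is the bookkeeping in the counting step (making sure the diagonal count is $\binom{2\ell-1}{a-1}$ and the pair count $\binom{2\ell-2}{a-2}$, with the latter correctly vanishing in the degenerate case $a=1$), together with the observation that the symmetric choice $e_{-i}=-e_i$ is precisely what forces $\sum_{i\in\LA}e_i=0$ and thereby drives the collapse.
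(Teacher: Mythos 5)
Your proof is correct and follows essentially the same route as the paper's: expand the square, count how many $a$-subsets contain a given element ($\binom{2\ell-1}{a-1}$) or a given pair ($\binom{2\ell-2}{a-2}$), and use the antisymmetry $e_{-i}=-e_i$ to kill the cross terms before applying Pascal's rule. The only (cosmetic) difference is that you dispatch the off-diagonal sum in one stroke via $\bigl(\sum_{i\in\LA}e_i\bigr)^2=0$, whereas the paper cancels the cross terms by an explicit four-case enumeration; the bookkeeping and the final coefficient $\binom{2\ell-1}{a-1}-\binom{2\ell-2}{a-2}=\binom{2\ell-2}{a-1}$ are identical.
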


\begin{proof}
When the lefthand side above  
is expanded, we have
\begin{itemize}
\item[(1)]
For each $1\le i \le \ell$,
$e_i^2$ and $e_{-i}^2$  occur
$\binomial {2\ell-1}{a-1}$ times.
\item[(2)]
For each $1\le i \le \ell$,
$e_i e_{-i}$ and $e_{-i}e_i $
each occur $\binomial {2\ell-2}{a-2}$ times.  
\item[(3)]
For
$1\le i,j \le \ell$ with $i\ne j$,
we have
$e_ie_{j}$,
$e_{-i}e_{-j}$,
$e_ie_{-j}$,
and $e_{-i}e_j$
each occurs $2\binomial{2\ell-2}{a-2}$ times.
\end{itemize}
Adding up each category,
since the cases within item (3) cancel themselves out,
we get that
\begin{align*}
\sum_{m_1,\ldots,m_a \in \LA: \, -\ell \le m_1<\dots<m_a \le \ell}
&(e_{m_1} + \cdots + e_{m_a})^2  \\
&=
\sum_{i=1}^{\ell}
(
2\binomial{2\ell-1}{a-1} - 2\binomial{2\ell-2}{a-2}
)\sum_{i=1}^{\ell}
e_{i}^2.
\end{align*}
Then the result follows from  
$\binomial{2\ell-1}{a-1}-\binomial{2\ell-2}{a-2}
=\binomial{2\ell-2}{a-1}.$
\end{proof}

\begin{lm}\label{combLemma2}
Let $d_1,\ldots,d_\ell\in\N$.
Set $e_i = \sgn(i) d_{ | i| }$ for  $i \in \LA$.
Fix $b_1,\ldots,b_\beta\in\N$.
For any subset $S\subseteq\LA$,
denote
$e_S=\sum_{i\in S}e_i.$
Then
$$\sum_{S_1,\ldots,S_\beta}
(\sum_{i=1}^{\beta} e_{S_i})^2
\text{
is an integral multiple of }
\sum_{i \in \LA}
e_{i}^2, $$
where the outer sum on the lefthand side
is over all $S_1,\ldots,S_\beta$,  where each
$S_i\subseteq\LA$ with $|S_i|=b_i$.
\end{lm}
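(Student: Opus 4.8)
The plan is to expand the square $\left(\sum_{i=1}^\beta e_{S_i}\right)^2$ into the diagonal terms $e_{S_p}^2$ and the cross terms $e_{S_p}e_{S_q}$ for $p<q$, and then to push the outer sum over the tuples $(S_1,\ldots,S_\beta)$ through the individual indices, exploiting that the sets $S_1,\ldots,S_\beta$ range independently. For a fixed diagonal index $p$, I would first sum over each $S_j$ with $j\ne p$, which contributes a harmless combinatorial factor of $\binomial{2\ell}{b_j}$ since there are $\binomial{2\ell}{b_j}$ subsets of $\LA$ of size $b_j$, and then apply Lemma~\ref{combLemma1} to the remaining inner sum $\sum_{|S_p|=b_p}e_{S_p}^2=\binomial{2\ell-2}{b_p-1}\sum_{i\in\LA}e_i^2$. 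This exhibits each diagonal contribution as an integral multiple of $\sum_{i\in\LA}e_i^2$, which is exactly the target quantity.

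The crucial step is to dispose of the cross terms. For fixed $p<q$, summing over the remaining $S_j$ again yields a product of binomial factors and leaves $\bigl(\sum_{|S_p|=b_p}e_{S_p}\bigr)\bigl(\sum_{|S_q|=b_q}e_{S_q}\bigr)$. Here I would count that each index $i\in\LA$ lies in exactly $\binomial{2\ell-1}{b-1}$ subsets of size $b$, so $\sum_{|S|=b}e_S=\binomial{2\ell-1}{b-1}\sum_{i\in\LA}e_i$. The key observation is that $\sum_{i\in\LA}e_i=0$: the antisymmetry $e_{-i}=-e_i$ forces $e_i$ and $e_{-i}$ to cancel in pairs. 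Hence each factor vanishes and every cross term drops out identically.

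Combining the two, the entire sum collapses to $\sum_{p=1}^\beta\bigl(\prod_{j\ne p}\binomial{2\ell}{b_j}\bigr)\binomial{2\ell-2}{b_p-1}\sum_{i\in\LA}e_i^2$, manifestly an integral multiple of $\sum_{i\in\LA}e_i^2$. I do not anticipate a genuine obstruction; the only point demanding care is the cross terms, which a priori threaten to spoil integrality, and the whole argument hinges on recognizing that they vanish because of the sign antisymmetry built into the definition $e_i=\sgn(i)d_{|i|}$. Once that cancellation is seen, the statement reduces cleanly to the already established Lemma~\ref{combLemma1}.
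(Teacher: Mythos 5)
Your proposal is correct, and it takes a genuinely different route from the paper. The paper argues by induction on $\beta$: for the inductive step it applies the involution $S\mapsto -S$ to the last set $S_\beta$ only, using $e_{-S}=-e_S$ to show the sum is unchanged when $e_{S_\beta}$ is replaced by $-e_{S_\beta}$; averaging the two expressions kills the cross terms involving $S_\beta$ and splits the sum into $\sum(e_{S_1}+\cdots+e_{S_{\beta-1}})^2$ plus $\sum e_{S_\beta}^2$, each disposed of by the inductive hypothesis and Lemma~\ref{combLemma1}. You instead expand the square once and for all and kill every cross term simultaneously via the identity $\sum_{|S|=b}e_S=\binomial{2\ell-1}{b-1}\sum_{i\in\LA}e_i=0$, which rests on the same antisymmetry $e_{-i}=-e_i$ but packages it differently. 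Your argument is non-inductive and buys an explicit value for the integer multiple, namely $\sum_{p=1}^{\beta}\bigl(\prod_{j\ne p}\binomial{2\ell}{b_j}\bigr)\binomial{2\ell-2}{b_p-1}$, whereas the paper's induction only certifies integrality; on the other hand the paper's involution trick is the same device it reuses later (in the parity computation of \S 7), so its proof is stylistically of a piece with the rest of the argument. All the steps you flag as needing care check out: the independence of the $S_j$ justifies pulling out the factors $\binomial{2\ell}{b_j}$, Lemma~\ref{combLemma1} applies verbatim to the diagonal sums since summing over $-\ell\le m_1<\cdots<m_a\le\ell$ is summing over subsets of $\LA$ of size $a$, and $\sum_{i\in\LA}e_i=0$ because $e_i+e_{-i}=d_{|i|}-d_{|i|}=0$.
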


\begin{proof}
We proceed by induction on $\beta$.
The case $\beta=1$ follows from Lemma \ref{combLemma1}.
So assume $\beta>1$.  
The key is that 
$$
\sum_{S_1,\ldots,S_\beta}
(\sum_{i=1}^{\beta} e_{S_i})^2
=\sum_{S_1,\ldots,S_\beta}
(e_{S_1}+\cdots+e_{S_{\beta-1}}-
e_{S_\beta})^2.
$$
The reason for this is that $S \mapsto -S$ gives a involution on 
subsets $S \subseteq \LA $ of any fixed size and 
satisfies $e_{-S}=-e_{S}$.  
Then adding the above righthand side to the lefthand side yields
$$
2\sum_{S_1,\ldots,S_\beta}
(\sum_{i=1}^{\beta} e_{S_i})^2
=\sum_{S_1,\ldots,S_\beta}
\left(
2
(e_{S_1}+\cdots+e_{S_{\beta-1}})^2
+
2e_{S_\beta}^2)
\right.
$$
The result follows by induction.   
\end{proof}

\begin{thm}\label{borcherdsproductseverywhere}
{\rm (Borcherds Products Everywhere)\/}  
Fix $\ell \in \N$ and $u \in \Z$ with $\ell+u$ even.  
Let $d_1,\ldots,d_\ell \in \N$ with $d_1+ \cdots + d_\ell $ even.  
Assume that $v = \frac1{24}(u+3\ell) \in \N$.  
If we set $k=\frac12(\ell+u)$ 
and $t=\frac12( d_1^2+ \cdots +d_{\ell}^2)$ 
then we have 
$\phi=\eta^u\,\prod_{i=1}^{\ell}\, \vartheta_{d_i}\in J_{k,t}\mero$.  
If $v$ is odd, additionally assume that $\phi\in J_{k,t}$.  
For $\psi=(-1)^v\frac{\phi|V_2}\phi$,  
we have the following:  
\begin{itemize}
\item[(1)]
$\psi \in J_{0,t}\wh$ and $c(n,r;\psi)\ge0$
for all $(n,r)$ with $4tn-r^2\le 0$.  
\item[(2)] 
There is a $k'\in\N$ such that 
$\Borch(\psi)\in M_{k'}(K(t))$ is a holomorphic Borcherds product with trivial character.  
\item[(3)]
$\Borch(\psi)$ is antisymmetric when $v$ is an odd power of two and otherwise symmetric.  
\item[(4)]
If $v=1$, then
$\Borch(\psi)\in M_k(K(t))$
and
$\Borch(\psi)$ and $Grit(\phi)$
have the same first and second Fourier Jacobi coefficients.
\end{itemize}
\end{thm}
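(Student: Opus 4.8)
\emph{Plan of proof.} The strategy is to read everything off the explicit shape of $\psi$ given in Proposition~\ref{psiprop}, feed the positivity of its singular part into Theorem~\ref{BP}, and then extract weight, character and symmetry from the first Fourier--Jacobi coefficient with the help of Lemmas~\ref{combLemma1} and~\ref{combLemma2}. Throughout I write $\psi=\Psi_1+\tfrac12q^{-v/2}\bigl((-1)^vF^{-}+F^{+}\bigr)$, where $\Psi_1=(-1)^v2^{k-1}\phi(2\tau,2z)/\phi(\tau,z)$ and $F^{\pm}=\prod_{2\notdivides j}(1\pm q^{j/2})^{2k}\prod_{i=1}^{\ell}\prod_{2\notdivides j}(1\pm q^{j/2}\zeta^{d_i})(1\pm q^{j/2}\zeta^{-d_i})$ are the two sign products of Proposition~\ref{psiprop}.

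For~(1), that $\psi\in J_{0,t}\wh$ with integral coefficients is exactly Corollary~\ref{psiwh}. For the positivity I would first note the sign identity $F^{-}(q^{1/2},\zeta)=F^{+}(-q^{1/2},\zeta)$, valid because every factor carries an \emph{odd} power $q^{j/2}$; hence $\tfrac12\bigl((-1)^vF^{-}+F^{+}\bigr)$ is the even part (for $v$ even) or the odd part (for $v$ odd) of $F^{+}$ in the variable $q^{1/2}$. Since $F^{+}$ is a product of factors $1+(\text{monomial})$ it has nonnegative coefficients, and so has each homogeneous part; thus the whole block $\tfrac12q^{-v/2}\bigl((-1)^vF^{-}+F^{+}\bigr)$ has nonnegative coefficients. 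If $v$ is even then $(-1)^v=1$ makes $\Psi_1$ nonnegative as well (its product expansion also carries only $+$ signs), so \emph{every} coefficient of $\psi$ is nonnegative and no holomorphy hypothesis is needed. If $v$ is odd then $\Psi_1=-2^{k-1}(\text{nonnegative series})$, but by Lemma~\ref{lm3A} its support lies in $\vJ(\phi)$, and the extra hypothesis $\phi\in J_{k,t}$ forces $\supp(\phi)\subseteq\{4tn-r^2\ge0\}$, a convex region with recession cone $\rayA$; hence $\Psi_1$ contributes nothing where $4tn-r^2<0$ and the positivity of $\psi$ there again follows. On the boundary $4tn-r^2=0$ the term $\Psi_1$ can be negative (for instance $c(1,4;\psi)=-8$ when $\phi=\vartheta_1^8$), but such $(n,r)$ give rank-one semidefinite $T$, so the Borcherds factors $1-q^n\zeta^r\xi^{tm}$ never vanish on $\Half_2$ and do not enter the divisor; this is why only the strict inequality matters in what follows.

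For~(2) and~(3) I would apply Theorem~\ref{BP} with $N=t$ and $\Psi=\psi$, producing $\Borch(\psi)\in M_{k'}\mero(K(t)^+,\chi)$ with $k'=\tfrac12c(0,0;\psi)$ and $\chi=(\epsilon^{24A}\times v_H^{2B})\chi_F^{\,k'+D_0}$; part~(1) makes $\Borch(\psi)$ holomorphic. Reading the $q^0$-row of $\psi$ off $F^{+}$ gives $c(0,\ell;\psi)$ as the coefficient of $q^{v/2}\zeta^{\ell}$ in $F^{+}$, and expanding the quantity $4C=\sum_\ell\ell^2c(0,\ell)$ of Theorem~\ref{BP} as a nonnegative-integer combination of sums $\bigl(\sum_i e_{S_i}\bigr)^2$ lets Lemma~\ref{combLemma2} identify $4C$ as an integer multiple of $\sum_{i\in\LA}e_i^2=4t$; hence $t\mid C$. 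Since $D_1\in\Z$ trivially, Proposition~\ref{Identity} ($tA-tD_1-C=0$) gives $A=D_1+C/t\in\Z$, so $\epsilon^{24A}=(\epsilon^{24})^A=1$; and as $\chi$ is a genuine character with $A\in\Z$, the classification of the characters $\epsilon^a\times v_H^b$ forces $v_H^{2B}=1$ too. Therefore $\chi=\chi_F^{\,k'+D_0}$ is trivial on $K(t)$ and $\Borch(\psi)\in M_{k'}(K(t))$, with $\mu_t$-eigenvalue $(-1)^{k'+D_0}$. Showing $k'\in\N$ is a parity count proving $c(0,0;\psi)$ positive and even, while part~(3) reduces to computing $D_0=\sum_{n<0}\sigma_0(-n)c(n,0;\psi)$ modulo $2$; the $2$-adic analysis of these $\zeta^0$-coefficients of $F^{+}$, showing $D_0$ is odd precisely when $v$ is an odd power of two, is the step I expect to be the main obstacle.

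Finally, for~(4) with $v=1$ the block $\tfrac12q^{-1/2}(F^{+}-F^{-})$ already starts in $q^0$ while $\Psi_1$ starts in $q^1$, so $\psi$ has no negative $q$-powers and $D_0=D_1=0$, giving $\chi=\chi_F^{k'}$. Extracting the coefficient of $q^{1/2}$ in $F^{+}$, namely $2k+\sum_{i=1}^{\ell}(\zeta^{d_i}+\zeta^{-d_i})$, yields $c(0,0;\psi)=2k$ and $c(0,\ell;\psi)=\#\{i:d_i=\ell\}$, so the leading Fourier--Jacobi coefficient of $\Borch(\psi)$, which by the Remark after Theorem~\ref{BP} is the theta block $\Theta:=\eta^{c(0,0)}\prod_\ell(\vartheta_\ell/\eta)^{c(0,\ell)}$, equals $\eta^u\prod_i\vartheta_{d_i}=\phi$; in particular $k'=k$. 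Since the same Remark identifies the second Fourier--Jacobi coefficient as $-\Theta\psi=-\phi\cdot\bigl(-\phi|V_2/\phi\bigr)=\phi|V_2$, and the first two Fourier--Jacobi coefficients of $\Grit(\phi)$ are $\phi$ and $\phi|V_2$, the two lifts agree in their first two Fourier--Jacobi coefficients, completing~(4).
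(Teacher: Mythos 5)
Your treatment of items (1), (2) and (4) is correct and follows essentially the same route as the paper's own proof: the decomposition of $\psi$ from Proposition~\ref{psiprop}, the even/odd-part-of-$F^{+}$ argument for the nonnegativity of the second block, Lemma~\ref{lm3A} together with the hypothesis $\phi\in J_{k,t}$ to confine the support of $\Psi_1$ to $\{4tn-r^2\ge 0\}$ when $v$ is odd, Lemma~\ref{combLemma2} and Proposition~\ref{Identity} to get $t\mid C$ and hence $A\in\Z$, and the explicit $q^0$-row of $\psi$ in the case $v=1$. Your route to $v_H^{2B}=1$ (via the classification of characters $\epsilon^a\times v_H^b$ once $24A\equiv 0$) differs harmlessly from the paper's congruence $2B\equiv 2C\equiv 0 \bmod 2$. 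Your observation that coefficients on the boundary $4tn-r^2=0$ can be negative (e.g.\ $c(1,4;\psi)=-8$ for $\phi=\vartheta_1^8$) and that this is harmless because the corresponding $T_o$ are degenerate and contribute nothing to the divisor is exactly the paper's resolution as well; strictly speaking it means item (1) should be read with the strict inequality, a tension present in the paper's own proof.

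The genuine gap is item (3). You correctly reduce it to showing that $D_0=\sum_{n<0}\sigma_0(-n)\,c(n,0;\psi)$ is odd exactly when $v$ is an odd power of two, but you stop there, and that is where the real work begins: the paper devotes all of Section~7 (Theorem~\ref{antisym}) to this parity computation. The argument there is a long chain of mod~$2$ simplifications of the $\zeta^0$-coefficients of $F^{+}$ — pairing $(S_1,S_3,\dots)$ with $(-S_1,-S_3,\dots)$ to restrict to symmetric index sets, the congruences $\binom{2k}{a}\equiv 0$ for odd $a$ and $\binom{2k}{2j}\equiv\binom{k}{j}$, Vandermonde convolution to collapse the sum to $\prod_i\binom{k+\ell}{r_i}=\prod_i\binom{12v}{r_i}$ using $k+\ell=12v$, a $2$-adic divisibility analysis after writing $v=2^{\beta}(1+8\mu)$, and finally an identification of the resulting generating function with $\prod_j(1-q^j)^{-24\mu}$ via the Euler/Jacobi identity for $\eta^3$ modulo~$2$. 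Without some version of this computation your proposal does not establish the symmetric/antisymmetric dichotomy, so item (3) remains unproven as written.
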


\begin{proof}
That $\psi$ is weakly holomorphic and has integral Fourier coefficients was proven in Corollary~\ref{psiwh}.  
We show that the  Fourier coefficients of singular indices are nonnegative.  
Consider the formula for $\psi$ from Proposition~\ref{psiprop}.  
Consider the case when $v$ is odd.  
Since $\phi\in J_{k,t}$, we have $4tn-r^2\ge 0$ for $(n,r) \in \supp(\phi)$ and the 
same inequality hold on the convex hull $\vJ(\phi)=\conv(\supp(\phi); \rayA)$.  
By Lemma \ref{lm3A},  
$\supp(\frac{\phi(2\tau,2z)}{\phi(\tau,z)}) \subseteq \vJ(\phi)$ also 
does not contain any $(n,r)$ with $4tn-r^2 < 0$.  
When $v$ is even, all the coefficients of $\phi(2\tau,2z)/\phi(\tau,z)$ 
are nonnegative anyhow.  
The remaining terms in this equation are 
\begin{equation*} 
\begin{aligned}
\frac{(-1)^v}{2} &\left(
\frac{\phi(\tfrac12\tau,z)}{\phi(\tau,z)}+
\frac{\phi(\tfrac12\tau+\tfrac12,z)}{\phi(\tau,z)}
\right)
\!\!\!\!\!\!\!\!\!\!\!\!\!\!\!\!\!\!\!\!\!\!\!\!
\!\!\!\!\!\!\!\!\!\!\!\!\!\!\!\!\!\!\!\!\!\!\!\!\!\!\!\!
\\
=
&
\frac12 
q^{-\frac12v}\left(
(-1)^v
\prod_{2\notdivides j} (1-q^{\frac j 2})^{2k} 
\prod_{i=1}^\ell \prod_{2\notdivides j}
(1 -q^{\frac j 2}\zeta^{d_i})
(1-q^{\frac j 2}\zeta^{-d_i})\right.
\\&
\qquad\quad\left.
+\prod_{2\notdivides j} (1+ q^{\frac j 2})^{2k} 
\prod_{i=1}^\ell \prod_{2\notdivides j}
(1 +  q^{\frac j 2}\zeta^{d_i})
(1+q^{\frac j 2}\zeta^{-d_i})
\right) .
\end{aligned}
\end{equation*}
It is clear that the nonzero coefficients are all positive 
because they are coefficients of a formal series of the form $f(q^{1/2}, \zeta) \pm f(-q^{1/2}, \zeta) $,  
where~$f$ has all nonnegative coefficients.  
The only terms of the singular part of $\psi$ that might have negative coefficients come from the 
first term where a monomial $q^n\zeta^r$ with $4tn-r^2=0$ might be supported.  
Therefore, if the multiplicity $\sum_{\lambda \in \N} c(\lambda^2 n_o m_o, \lambda r_o)$ 
of $\Borch(\psi)$ on $\Hum\mymat{n_o}{r_o/2}{r_o/2}{tm_o}$ has a negative summand, 
$c(\lambda_1^2 n_o m_o, \lambda_1 r_o)<0$, for some $\lambda_1 \in \N$, 
then $4t( \lambda_1^2 n_o m_o ) - ( \lambda_1 r_o)^2 =0$ and 
$\Hum\mymat{n_o}{r_o/2}{r_o/2}{tm_o}$ is empty.  
Thus $\Borch(\psi)$ is a holomorphic Borcherds product by Theorem~\ref{BP}.

To prove that the character is actually trivial, 
we use the notation 
\begin{align*}
A&=\tfrac{1}{24}\sum_{r\in\Z}c(0,r;\psi), \quad 
C=\tfrac{1}{4}\sum_{r\in\Z} r^2 c(0,r;\psi),
\\
D_1&=\sum_{n<0,r\in\Z}\sigma_1(-n)\,c(n,r;\psi).
\end{align*}

\def\SS{{\cal S}}
First, we prove $t|C$.
From Proposition~\ref{psiprop}, noting that $v>0$ by assumption here, 
we have that
$
c(0,r; \psi)
$ is the coefficient of 
$q^{\frac12 v}\zeta^r$
of
$$
\prod_{2\notdivides j} (1+ q^{\frac j 2})^{2k} 
\prod_{i=1}^\ell \prod_{2\notdivides j}
(1 +  q^{\frac j 2}\zeta^{d_i})
(1+q^{\frac j 2}\zeta^{-d_i})
.$$
Let $e_i=\sgn(i) d_{ | i | }$ 
for $i \in \LA =\{ -\ell,\dots,-1,1,\dots,\ell\}$.
Then
$
c(0,r;\psi)
$ is the coefficient of 
$q^{\frac12 v}\zeta^r$
in 
$$
\prod_{2\notdivides j} (1+ q^{\frac j 2})^{2k} 
\prod_{i \in \LA} \prod_{2\notdivides j}
(1 +  q^{\frac j 2}\zeta^{e_i})
.$$
Let $a_j$ be the number of factors of $q^{j/2}$ selected from the product, and 
$b_j$ be the total number of 
$q^{j/2}\zeta^{e_{-\ell}}, \dots, q^{j/2}\zeta^{e_{-1}}, q^{j/2}\zeta^{e_{1}}, \dots,q^{j/2}\zeta^{e_{ \ell}}$ selected from the product.  
Denote the set
\begin{equation*}
\SS_v
 = 
\{
(a_1,a_3,a_5,\ldots;
b_1,b_3,\ldots:
a_i,b_i\in\N\cup\{0\},
\,
\sum_{i=1}(a_i + b_i)i
=v
\}.
\end{equation*}
For $S\subset\LA$, denote $e_S=\sum_{i\in S}e_i$.
Then $4C$ is the sum over each of the above elements of $\SS_v$ of the sum
\begin{equation}\label{eq3}
\left(\binomial{2k}{a_1}\binomial{2k}{a_3}\cdots\right)
\sum_{(S_1,S_3,\ldots):|S_i|=b_i} \left(
\sum_{i}
e_{S_i}
\right)^2
\end{equation}
where each $S_i\subseteq \LA$.
By Lemma \ref{combLemma2},
the above
Equation (\ref{eq3})
is an integral multiple
of
$
\sum_{i \in \LA}
e_{i}^2$
This proves that
$4C$ is a multiple of $\sum_{i \in \LA}
e_{i}^2$.
Since $4t=\sum_{i \in \LA}
e_{i}^2$, this proves $t|C$.  
In particular, $C$ is integral and this shows that $2B \equiv 2C \equiv 0 \mod 2$.  

It is clear that $D_1\in\Z$.
By Proposition~\ref{Identity},  we have 
$tA-tD_1-C=0$,  and 
we deduce that $A\in\Z$.
This says the Borcherds product is a paramodular form with a trivial character on $K(t)$.

\def\zeroIndicator{Z}

Finally, for item (4), When $v=1$, we can say more.
From the formula for $\psi$,
we see
$$\psi = 
2k + \sum_{i=1}^\ell (\zeta^{d_i}+\zeta^{-d_i})+
q(\cdots)+q^2(\cdots)+\cdots.
$$
so that
\begin{align*}
A&=\tfrac{1}{24}\sum_{r\in\Z}c(0,r;\psi) = \tfrac1{24}(2k+2\ell)=1,
\\
C&=\tfrac{1}{4}\sum_{r\in\Z} r^2 c(0,r;\psi)
=\tfrac{1}{4}\sum_{i=1}^\ell 2 d_i^2 = \tfrac{1}{4} 2(2t)=t,
\\
D_1&=\sum_{n<0,r\in\Z}\sigma_1(-n)\,c(n,r;\psi)=0.
\end{align*}
The equation  $c(0,0;\psi)=2k$ says the 
weight of $\Borch(\psi)$ is $k$.  
By Theorem~\ref{BP} the first Fourier Jacobi coefficient of $\Borch(\psi)$ is 
$$
\eta^{c(0,0;\psi)} \prod_{r \in \N} \left( \frac{\vartheta_{r}}{\eta} \right)^{c(0,r;\psi)}
=
\eta^{2k} \prod_{i=1}^{\ell}  \left( \frac{\vartheta_{d_i}}{\eta} \right),  
$$
which is exactly $\phi$.  
In view of the formula
$$
\Borch(\psi)={\tilde \phi} \exp \left(-\Grit(\psi) \right) 
= \phi \xi^C 
 \left( 1 - \psi \xi^t + \frac{1}{2!} \psi|V_2 \xi^{2t} + \cdots  \right), 
$$
the second Fourier Jacobi coefficient of $\Borch(\psi)$
is $\phi(-\psi)=\phi|V_2$, which is the second Fourier Jacobi coefficient of
$\Grit(\phi)$. 
This completes the proof, except for item (3), which we postpone until the next section.  
\end{proof}

\section{Proof of Symmetry and Antisymmetry}  

This section is devoted to the proof that a paramodular Borcherds Product,  
constructed as in Theorem~\ref{borcherdsproductseverywhere}, is antisymmetric if and only if 
the vanishing order of the theta block is an odd power of two.  
One can glimpse this in Table~$1$, 
where odd weights occur only at vanishing orders~$2$ and~$8$.  
In Table~$1$, 
for the special case of level one, antisymmetric and odd weight are equivalent.  
The following theorem gives the general result on the parity of $D_0$ in terms of the hypotheses 
of Theorem~~\ref{borcherdsproductseverywhere} and thereby completes the proof of item~{(3)} 
in that theorem.  

\begin{thm}\label{antisym}
Let $k, u \in \Z$ and $v, \ell, t, d_1,\ldots,d_\ell \in \N$.  
Assume that 
$2k=\ell+u$, $2t=d_1^2+\dots +d_{\ell}^2$ and $24v=u+3\ell$, 
so that 
$\phi=\eta^u\,
\prod_{i=1}^{\ell}\, \vartheta_{d_i}\in J_{k,t}\mero$.  
For $\psi=(-1)^v\frac{\phi|V_2}\phi$,  
let $D_0 = \sum_{n \in \Z: n< 0} 
\sigma_0(-n) c(n,0;\psi)$.  
We have 
$$D_0 \equiv 1 \mod 2 
\iff
\exists \text{ odd } \beta \in \N: v=2^\beta.
$$
Equivalently, $\Borch(\psi)$ is antisymmetric if and only if $v$ is an odd power of two.  
\end{thm}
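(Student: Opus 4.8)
The plan is to read off the parity of $D_0$ from the explicit product formula for $\psi$ in Proposition~\ref{psiprop} together with the character data in Theorem~\ref{BP}. Since Theorem~\ref{BP} gives $\Borch(\psi)(\mu_N\langle\Omega\rangle)=(-1)^{k+D_0}\Borch(\psi)(\Omega)$ while symmetric forms carry $\chi_F^{k}$, the form $\Borch(\psi)$ is antisymmetric exactly when $D_0$ is odd. Everything therefore reduces to computing $D_0\bmod 2$.

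First I would isolate the part of $\psi$ that feeds $D_0$. In Proposition~\ref{psiprop} the first summand, coming from $\phi(2\tau,2z)/\phi(\tau,z)$, equals $(-1)^v2^{k-1}q^v(\cdots)$ and is supported in $q$-degrees $\ge v\ge 1$, so it contributes nothing to $c(n,0;\psi)$ with $n<0$. Writing $Q=q^{1/2}$ and $P_-(Q,\zeta)=\prod_{2\notdivides j}(1-Q^j)^{2k}\prod_{i=1}^{\ell}\prod_{2\notdivides j}(1-Q^j\zeta^{d_i})(1-Q^j\zeta^{-d_i})$, the remaining summand is $\tfrac12 Q^{-v}\big((-1)^vP_-(Q,\zeta)+P_-(-Q,\zeta)\big)$, because the second product in Proposition~\ref{psiprop} is obtained from the first by $Q\mapsto-Q$. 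Setting $a(m)=[Q^m\zeta^0]P_-$, a short computation gives $c(n,0;\psi)=(-1)^v a(2n+v)$ for $n<0$. As $\sigma_0(-n)$ is odd exactly when $-n$ is a perfect square, reduction mod $2$ yields
\begin{equation*}
D_0\equiv\sum_{s\ge 1}a(v-2s^2)\pmod 2.
\end{equation*}

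Next I would reduce $P_-$ modulo $2$. Using $(1\pm x)^2\equiv 1+x^2$ and the Jacobi triple product $\prod_{n\ge1}(1-Q^{2n})(1+Q^{2n-1}\zeta^{d})(1+Q^{2n-1}\zeta^{-d})=\sum_{m\in\Z}Q^{m^2}\zeta^{md}$, together with the relation $k+\ell=12v$ forced by $2k=\ell+u$ and $24v=u+3\ell$, one finds $[\zeta^0]P_-\equiv\vartheta_L(Q)/E(Q^2)^{12v}\pmod 2$, where $E(x)=\prod_{n\ge1}(1+x^n)$ and $\vartheta_L(Q)=\sum_{\vec m\in\Z^\ell:\ \sum m_id_i=0}Q^{\sum m_i^2}$ is the theta series of the lattice $L=\{\vec m:\ \sum m_id_i=0\}$. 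The decisive step is that $\vartheta_L\equiv 1\pmod 2$: the fixed-point-free involution $\vec m\mapsto-\vec m$ pairs all nonzero lattice vectors of each fixed norm, so every positive-norm representation number is even. In particular the dependence on the $d_i$ disappears, as it must, and $a(m)\equiv[Q^m]E_o(Q^2)^{12v}$ with $E_o(x)=\prod_{n\text{ odd}}(1+x^n)\equiv E(x)^{-1}\pmod 2$.

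Finally, everything now lives in even powers of $Q$, so $D_0\equiv 0$ for odd $v$; writing $v=2v'$ the claim becomes that $[x^{v'}]\big(E_o(x)^{12v}\sum_{s\ge1}x^{s^2}\big)$ is odd iff $v$ is an odd power of two. Writing $v=2^cw$ with $w$ odd and using $E_o(x)^{12v}=E_o(x^{2^{c+2}})^{3w}$, the equation $s^2=2^{c-1}(w-8J)$ forces $c$ odd and $w\equiv1\pmod 8$ and collapses the sum to $\sum_{\sigma\ge1\text{ odd},\,\sigma^2\le w}[y^{(w-\sigma^2)/8}]E_o(y)^{3w}\bmod 2$. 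For $w=1$, i.e.\ $v=2^{\beta}$ with $\beta=c$ odd, this is the single term $[y^0]E_o(y)^3=1$, giving antisymmetry, which already matches the odd weights at $v=2,8$ in Table~1. I expect the genuine obstacle to be the case $w>1$: there one must prove this weighted count of odd-square representations is even. I would attack it either through the Frobenius expansion $E_o(y)^{3w}\equiv\prod_{i\in\mathrm{Bits}(3w)}E_o(y^{2^i})$ with careful degree bookkeeping, or by a fixed-point-free involution on the underlying colored distinct-odd-part partitions (using that consecutive odd squares differ by a multiple of $8$). This last parity computation, not the theta manipulations, is where the real work lies.
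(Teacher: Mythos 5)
Your reduction is correct as far as it goes, and the middle portion takes a genuinely different route from the paper's. Where the paper expands the product $\prod_{2\nmid j}(1+q^{j/2})^{2k}\prod_{i}\prod_{2\nmid j}(1+q^{j/2}\zeta^{\pm d_i})$ combinatorially --- summing over subsets $S_i\subseteq\{\pm1,\dots,\pm\ell\}$, killing the asymmetric terms with the involution $S\mapsto-S$, and then using $\binom{2k}{a}\equiv\binom{k}{a/2}\pmod 2$ and Vandermonde convolution to arrive at $\prod_i\binom{k+\ell}{r_i}=\prod_i\binom{12v}{r_i}$ --- you reach the same point in one stroke via the Jacobi triple product and the fixed-point-free involution $\vec m\mapsto-\vec m$ on the lattice $L=\{\vec m:\sum_i m_id_i=0\}$. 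The two computations agree: the paper's $\sum_{(r_1,r_3,\dots)\in\TT_h}\prod_i\binom{12v}{r_i}$ is exactly $[q^h]\prod_{\text{odd }j}(1+q^j)^{12v}=[q^h]E_o(q)^{12v}$, which is your formula, and your subsequent $2$-adic analysis forcing $c$ odd and $w\equiv1\pmod 8$ (and the base case $w=1$) coincides with the paper's. Your lattice-theta argument is arguably cleaner than the paper's set-combinatorics, and would be a legitimate substitute for that stretch of the proof.

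However, there is a genuine gap at exactly the point you flag: you do not prove that $\sum_{\sigma\ \text{odd}}[y^{(w-\sigma^2)/8}]E_o(y)^{3w}$ is even when $w>1$, offering only two candidate strategies. This is not a routine verification, and the paper's way of closing it rests on an idea absent from your sketch. Writing $w=1+8\mu$ and $\sigma=2\lambda+1$, the sum equals $[y^{\mu}]\bigl(\sum_{\lambda\ge0}y^{\binom{\lambda+1}{2}}\bigr)E_o(y)^{3w}$. By Jacobi's identity for $\eta^3$ one has $\prod_j(1-y^j)^3\equiv\sum_{\lambda\ge0}y^{\binom{\lambda+1}{2}}\pmod 2$, while $E_o(y)^{3w}\equiv\prod_j(1-y^j)^{-3-24\mu}$; the exponent $3$ cancels and $D_0\equiv[y^{\mu}]\prod_j(1-y^j)^{-24\mu}\pmod 2$. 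Then, writing $\mu=2^{\alpha}\delta$ with $\delta$ odd, $\prod_j(1-y^j)^{24\mu}\equiv\bigl(\sum_{n\ge0}y^{8\cdot2^{\alpha}\binom{n+1}{2}}\bigr)^{\delta}$ is a power series in $y^{2^{\alpha+3}}$, hence so is its reciprocal, which therefore has no $y^{2^{\alpha}\delta}$ term; so $D_0\equiv0$ for all $\mu\ge1$. The missing idea, in short, is to recognize the sum over $\sigma$ as multiplication by $\eta^3/q^{1/8}$ modulo $2$, so that it cancels against the $3$ in $3w=3+24\mu$ and reduces everything to a clean $2$-adic support statement; neither the Frobenius-bit expansion of $E_o(y)^{3w}$ nor an ad hoc involution on colored partitions obviously delivers this cancellation. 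Until that step is supplied, the theorem is proved only in the direction $v=2^{\beta}$ (odd $\beta$) $\Rightarrow D_0$ odd, together with the easy exclusions ($v$ odd, or $2^c\Vert v$ with $c$ even, or $w\not\equiv1\pmod 8$).
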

\smallskip

If $n <0$ then only the second term in Proposition~\ref{psiprop} 
can contribute to $c(n,0;\psi)$.  Thus $c(n,0;\psi)$ is the coefficient of 
$q^{n+\frac12 v} \zeta^0$ in

$$
\prod_{2\notdivides j} (1+ q^{\frac j 2})^{2k} 
\prod_{i=1}^\ell \prod_{2\notdivides j}
(1 +  q^{\frac j 2}\zeta^{d_i})
(1+q^{\frac j 2}\zeta^{-d_i})
.$$
Set $e_i = \sgn(i) d_{ | i| }$ for 
$i \in \LA =\{ -\ell,\dots,-1,1,\dots,\ell\}$.
Then
$
c(n,0 ;\psi)
$ is the coefficient of 
$q^{n+\frac12 v}\zeta^0$
in 
\begin{equation}
\label{dot}
\prod_{  \text{ odd } j \in \N }\left((1+ q^{\frac j 2})^{2k}   \prod_{i \in \LA} (1 +  q^{\frac j 2}\zeta^{e_i})  \right).  
\end{equation}
We multiply out the infinite product~{(\ref{dot})} 
to an infinite sum.  For odd~$j$, let 
$a_j$ be the number of factors of $q^{j/2}$ selected from the product, and 
$b_j$ be the total number of 
$q^{j/2}\zeta^{e_{-\ell}}, \dots, q^{j/2}\zeta^{e_{-1}}, q^{j/2}\zeta^{e_{1}}, \dots,q^{j/2}\zeta^{e_{ \ell}}$ 
selected from the product.  
Knowing $a_j$ determines the contribution of the $q^{j/2}$ factors but, for fixed $b_j$, 
we still need to sum over all subsets $S_j \subseteq \LA$ with $ | S_j| = b_j$ in order to get the 
exponent of $\zeta$.  
\begin{df}
For $h \in \Z$, define the set $\TT_h=\TT(h)$ by 
$$
\TT_h
 = 
\{
(a_1,a_3,a_5,\ldots) \in \prod_{\text{\rm odd $i \in \N$ }} ^{\infty} (\N \cup \{0\}) : \sum_{\text{\rm odd $i \in \N$  }} i a_i =h
\}.
$$
\end{df}
For $S\subset\LA$, denote $e_S=\sum_{i\in S}e_i$.
Then multiplying out~{(\ref{dot})} gives
\begin{align*}
&\sum_{\substack{A,B \in \N \cup \{0\} }} \, 
\sum_{\substack{ (a_1,a_3,\dots) \in \TT_A, \\ (b_1,b_3,\dots) \in \TT_B }}
\left(  \binomial{2k}{a_1}\binomial{2k}{a_3}\cdots \right) 
q^{\frac{A+B}{2}}
\sum_{\substack{ S_1,S_3,\ldots \subseteq \LA: \\ | S_i| = b_i }} \,
 \prod_{\text{\rm odd $i \in \N$  }} 
\zeta^{e_{S_i}} .  
\end{align*}
In order to grab the coefficients of the monomials with $\zeta^0$, define
$$\zeroIndicator(x)=\begin{cases}
1 & \text{if } x=0,\\
0 & \text{otherwise}.
\end{cases}
$$
For $n<0$, we get that 
$c(n,0;\psi)$ is the total sum of
\begin{equation}\label{eq5}
\sum_{\substack{A,B \in \N \cup \{0\}: \\ A+B=v+2n }} \, 
\sum_{\substack{ (a_1,a_3,\dots) \in \TT_A, \\ (b_1,b_3,\dots) \in \TT_B }}
\left(  \binomial{2k}{a_1}\binomial{2k}{a_3}\cdots \right) 
\sum_{\substack{ S_1,S_3,\ldots \subseteq \LA: \\ | S_i| = b_i }} \,
\zeroIndicator\left(  \sum_{\text{\rm odd $i \in \N$  }}   e_{S_i}  \right).
\end{equation}
A series of simplifications will show that the parity of  $D_0 $ only depends upon $v$.  
For $S \subseteq \LA$, we have $e_{-S}=-e_{S}$ and if, in the final summation, 
we pair $(S_1,S_3,\dots)$ with $(-S_1,-S_3,\dots)$ when these are distinct, 
then 
the sum for $c(n,0;\psi)$ modulo two 
may be restricted to sum only over $S_i$ with $S_i=-S_i$.  
Hence $c(n,0;\psi)$ is congruent to
\begin{equation*}
\sum_{\substack{A,B \in \N \cup \{0\}: \\ A+B=v+2n }} \, 
\sum_{\substack{ (a_1,a_3,\dots) \in \TT_A, \\ (b_1,b_3,\dots) \in \TT_B }}
\left(  \binomial{2k}{a_1}\binomial{2k}{a_3}\cdots \right) 
\sum_{\substack{ S_1,S_3,\ldots \subseteq \LA: \\ | S_i| = b_i \\ \text{ and } -S_i=S_i}} \,
\zeroIndicator\left( \sum_{\text{\rm odd $i \in \N$  }}  e_{S_i}  \right).
\end{equation*}
Restricting to subsets $S_i$ with $S_i = -S_i$ is the same as summing over 
$S_i$ of the form $T \cup (-T)$, where $T \subseteq \{1,2,\dots,\ell\}$; 
and in each such case we have $e_{S_i}=e_{ T \cup (-T) } =0$.  Thus we have
$$
\sum_{\substack{ S_1,S_3,\ldots \subseteq \LA: \\ | S_i| = b_i \\ \text{ and } -S_i=S_i}} \,
\zeroIndicator\left(  \sum_{\text{\rm odd $i \in \N$  }}  e_{S_i}  \right)
=  
\sum_{\substack{ T_1,T_3,\ldots \subseteq \{1,2,\dots,\ell\}: \\ | T_i| = \frac12 b_i }} \,
1 
=\prod_{\text{\rm odd $i \in \N$  }} 
\binomial{\ell}{\frac12 b_i} 
$$
and this is zero if any $b_i$ is odd.  
Thus we may restrict the summation for $c(n,0;\psi)$ to those $(b_1,b_3,\dots) \in \TT_B$ 
where every $b_i$ is even.  The same is actually true for the $a_i$ because 
$\binomial{2k}{a_i} \equiv 0 \mod 2$ when $a_i$ is odd.  The proof is that,  by definition, 
$\binomial{2k}{a}= \frac{2k}{a} \binomial{2k-1}{a-1}$ and if $a$ is odd then 
$\binomial{2k}{a_i} \equiv 2k  \binomial{2k-1}{a-1} \equiv 0 \mod 2$.  
Furthermore, for even $a$, we have $\binomial{2k}{a} \equiv \binomial{k}{ a/2} \mod 2$.  
The proof is to ignore all the odd factors in 
$$
\binom{2m}{2j}= \frac{(2m)(2m-1)2(m-1)(2m-3)2(m-2) \cdots}{(2j)(2j-1)2(j-1)(2j-3)2(j-2) \cdots}, 
$$
and to cancel a $2$ for each even factor, leaving $\binomial{m}{j}$.  
Let ${\bar A} = \frac12 A$, ${\bar B} = \frac12 B$, ${\bar a_i} = \frac12 a_i$ 
and ${\bar b_i} = \frac12 b_i$.  For negative~$n$, we have
$$
c(n,0;\psi) \equiv 
\sum_{\substack{{\bar A}, {\bar B} \in \N \cup \{0\}: \\ {\bar A}+ {\bar B}= \frac12 v+n }} \, 
\sum_{\substack{ ({\bar a_1},{\bar a_3},\dots) \in \TT_{\bar A}, \\ ({\bar b_1},{\bar b_3},\dots) \in \TT_{\bar B} }}
\left(  \binomial{k}{\bar a_1}\binomial{k}{\bar a_3}\cdots \right) 
\left(  \binomial{\ell}{\bar b_1}\binomial{\ell}{\bar b_3}\cdots \right) . 
$$
We reorganize this summation by setting 
$r_i = {\bar a_i} + {\bar b_i}$ for odd~$i$ 
so that $(r_1,r_3,\dots) \in \TT({{\bar A}+{\bar B}})=\TT({\frac12 v +n})$.  
For negative~$n$, we have
$$
c(n,0;\psi) \equiv 
\sum_{\substack{ ({ r_1},{ r_3},\dots) \in \TT({ \frac12 v+n}) }}\,
\prod_{ \text{odd } i \in \N} 
\left( \sum_{ {\bar a_i} + {\bar b_i}=r_i} \binom{k}{\bar a_i} \binom{\ell}{\bar b_i} \right) . 
$$
Now we make use of the binomial convolution identity, valid for 
$n_1$, $n_2$, $k \in \N \cup \{0\}$,
$$
\sum_{j=0}^k \binom{n_1}{k-j} \binom{n_2}{j} = \binom{n_1+n_2}{k} .
$$
The proof is to count the number of size~$k$ subsets of $n_1+n_2$~items by 
breaking the count into cases according to the number of elements in the subset that are from the 
first $n_1$ items and the number that are from the second $n_2$ items.  
Thus, 
$$
c(n,0;\psi) \equiv 
\sum_{\substack{ ({ r_1},{ r_3},\dots) \in \TT({ \frac12 v+n}) }}\,
\prod_{ \text{odd } i \in \N} 
\binom{k+\ell}{r_i} 
$$
and remembering the theta block satisfies $k+\ell=12v$,  we have 
$$
c(n,0;\psi) \equiv
\sum_{\substack{ ({ r_1},{ r_3},\dots) \in \TT({ \frac12 v+n}) }}\,
\prod_{ \text{odd } i \in \N} 
\binom{12v}{r_i} .  
$$
This shows that the parity of $D_0$ depends only on $v$ and concludes 
the first series of reductions.  At this point, one might finish by computing any 
example for each~$v$.  It is just as easy, however, 
to continue with the formula at hand, which amounts at least formally to a specific example.  
According to the definition $D_0 = \sum_{n \in \Z: n< 0} \sigma_0(-n) c(n,0;\psi)$, 
we need only consider $n$ such that $\sigma_0(-n) \not\equiv 0 \mod 2$; this condition 
holds if and only if $-n$ is a square.  So we consider, for $n >0$, 
$$
c(-n^2,0;\psi) \equiv
\sum_{\substack{ ({ r_1},{ r_3},\dots) \in \TT({ \frac12 v-n^2}) }}\,
\prod_{ \text{odd } i \in \N} 
\binom{12v}{r_i} .  
$$
We will show that $c(-n^2,0;\psi) \not\equiv 0 \mod 2$ implies that 
$n= 2^{\frac{\beta-1}{2}} m$ for some odd $\beta,m\in \N$.  
Let $v= 2^{\beta} w$, where $w \in \N$ is odd. For $n>0$, 
$$
c(-n^2,0;\psi) \equiv
\sum_{\substack{ ({ r_1},{ r_3},\dots) \in \TT({ \frac12 v-n^2}) }}\,
\prod_{ \text{odd } i \in \N} 
\binom{2^{\beta+2}\,3w}{r_i} .  
$$
As before, if $\binomial{2^{\beta+2}\,3w}{r_i} $ is odd then $2^{\beta+2} | r_i$, 
so we may restrict this sum to $r_i$ that are divisible by   $2^{\beta+2}$.  
Let ${\bar r_i} = r_i/2^{\beta+2}$, so that
$$
c(-n^2,0;\psi) \equiv
\sum_{\substack{ ({\bar r_1},{\bar  r_3},\dots) \in \TT({     \frac{ \frac12 v-n^2}{2^{\beta+2}})        } }}\,
\prod_{ \text{odd } i \in \N} 
\binom{3w}{\bar r_i} .  
$$
If this summation is not empty, then we have $2^{\beta+2} | (\frac12 v -n^2)$ 
or, equivalently,   $2^{\beta+2} | (2^{\beta-1}w -n^2)$.  
This implies  $2^{\beta-1} | n^2$ and  $8 | ( w - \frac{n^2}{2^{\beta-1}})$.  
Since $w$ is odd, the integer $ \frac{n^2}{2^{\beta-1}}$ is odd, 
which implies that $\beta-1$ is even and that $n= 2^{\frac{\beta-1}{2}} m$ for some odd $m \in \N$, 
as claimed.  Thus we have 
$$
c(-n^2,0;\psi) \equiv
\sum_{\substack{ ({\bar r_1},{\bar  r_3},\dots) \in \TT({     \frac{ w-m^2}{8}        }) }}\,
\prod_{ \text{odd } i \in \N} 
\binom{3w}{\bar r_i} , 
$$
since $   \frac{ \frac12 v-n^2}{2^{\beta+2}}  =   \frac{ 2^{\beta-1} w-2^{\beta-1}m^2}{2^{\beta+2}}  =  \frac{ w-m^2}{8} $.  
If $c(-n^2,0;\psi) \not\equiv 0 \mod 2$ then $ w \equiv m^2 \mod 8$, 
which implies $w \equiv 1 \mod 8$.  In this case, let 
$w= 1 + 8 \mu$ for $ \mu \in \N \cup \{0\}$ and $m = 2 \lambda +1$ for $ \lambda \in \N \cup \{0\}$.  
In terms of $\mu$ and $\lambda$, we have 
$\frac{w -m^2}{8}= \frac{ 1+8 \mu - (2 \lambda+1)^2}{8} = \mu - \binomial {\lambda +1}{2}$.  
So 
$$
c(-n^2,0;\psi) \equiv
\sum_{\substack{ ({\bar r_1},{\bar  r_3},\dots) \in \TT({    \mu - \binomial {\lambda +1}{2}      }) }}\,
\prod_{ \text{odd } i \in \N} 
\binom{3w}{\bar r_i} .  
$$
Since all nonzero $c(-n^2,0;\psi)$ are of this form, we calculate $D_0$ as
\begin{align*}
D_0 &\equiv  
\sum_{n \in \Z: n< 0} \sigma_0(-n) c(n,0;\psi)  \equiv 
\sum_{n \in \N} \sigma_0(n^2) c(-n^2,0;\psi)  \\
&\equiv \sum_{n \in \N}  c(-n^2,0;\psi) \equiv 
\sum_{\lambda \ge 0} \,
\sum_{\substack{ ({\bar r_1},{\bar  r_3},\dots) \in \TT({    \mu - \binomial {\lambda +1}{2}      }) }}\,
\prod_{ \text{odd } i \in \N} 
\binom{3w}{\bar r_i} .  
\end{align*}

{\it Claim:   
For $v = 2^{\beta} (1+ 8 \mu)$, 
$D_0 \equiv 0 \mod 2$ if and only if $ \mu \ge 1$.  \/}  
\smallskip

To prove the easy direction of this claim, 
note that $\mu=0$ forces $\lambda=0$ and all ${\bar r_i} =0$, 
so that $D_0 \equiv 1 \mod 2$.  
For the harder direction, fix $\mu \ge 1$ and $w= 1 + 8 \mu$.  
Note that $3w=3+24 \mu$.  
\begin{df}
For $n,\mu \in \Z$ with $n \ge 0$ and $\mu \ge 1$, define
$$
H(n) = 
\sum_{\substack{ ({ r_1},{ r_3},\dots) \in \TT_{   n     } }}\,
\prod_{ \text{\rm odd } i \in \N} 
\binom{3+24 \mu}{ r_i} .  
$$
\end{df}
Note that we have 
$D_0 \equiv  \sum_{\lambda \ge 0} \, H \left(\mu - \binomial{\lambda+1}{2} \right)$.  
Consider the generating function for $H(n)$, 
$$
\sum_{n \ge 0} H(n) q^n = 
(1+q)^{3+24\mu} 
(1+q^3)^{3+24\mu} \cdots = 
\prod_{ \text{\rm odd } j \in \N } (1+q^j)^{3+24\mu} .  
$$
We finally write $D_0$ in terms of modular forms modulo two:
$$
D_0 \equiv 
\Coeff\left( q^{\mu},   ( \sum_{\lambda \ge 0}  q^\binomial{\lambda+1}{2} ) ( \sum_{n \ge 0} H(n) q^n )     \right) 
\mod 2.  
$$
\begin{lm}
$$
\prod_{\text{\rm odd }  j \in \N}  
(1+q^j) 
\equiv 
\dfrac{1}{  \prod_{ j \in \N} (1-q^j)          } 
\mod 2.   
$$
\end{lm}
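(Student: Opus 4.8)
The plan is to reduce everything modulo~$2$ and work in the ring of formal power series with coefficients in $\Z/2\Z$, in which both sides are units (each product has constant term~$1$), so that the indicated reciprocal is meaningful. Two elementary observations drive the argument. First, since $2q^j\equiv 0\mod 2$, we have $1+q^j\equiv 1-q^j\mod 2$, so it suffices to prove the identity with every $1+q^j$ replaced by $1-q^j$. Second, the Frobenius endomorphism in characteristic~$2$: for any $f\in\Z[[q]]$ one has $f(q)^2\equiv f(q^2)\mod 2$, because the cross terms of a square all carry a factor of~$2$. Applying this to $f(q)=\prod_{j\in\N}(1-q^j)$ gives $\prod_{j\in\N}(1-q^{2j})\equiv\bigl(\prod_{j\in\N}(1-q^j)\bigr)^2\mod 2$.

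Next I would split the full product according to the parity of the index. Over $\Z[[q]]$ there is the exact factorization
$$
\prod_{j\in\N}(1-q^j)=\prod_{\text{odd }j\in\N}(1-q^j)\cdot\prod_{j\in\N}(1-q^{2j}),
$$
obtained simply by collecting the even factors $1-q^{2j}$ separately from the odd ones. Solving for the odd part and inserting the Frobenius relation yields, modulo~$2$,
$$
\prod_{\text{odd }j\in\N}(1-q^j)=\frac{\prod_{j\in\N}(1-q^j)}{\prod_{j\in\N}(1-q^{2j})}\equiv\frac{\prod_{j\in\N}(1-q^j)}{\bigl(\prod_{j\in\N}(1-q^j)\bigr)^2}=\frac{1}{\prod_{j\in\N}(1-q^j)}\mod 2.
$$
Combining this with the first observation, $\prod_{\text{odd }j}(1+q^j)\equiv\prod_{\text{odd }j}(1-q^j)\mod 2$, gives the claim.

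There is no real obstacle here; the single point deserving a word of care is the legitimacy of the division, which is justified by noting that $\prod_{j\in\N}(1-q^j)$ is invertible in $\Z/2\Z[[q]]$ (equivalently, in the ring $R_J(N)$ of the previous section, whose elements are units precisely when they have a nonzero minimal term). As an alternative one could argue combinatorially, reading $\prod_{\text{odd }j}(1+q^j)$ through Euler's theorem as the generating function for partitions into distinct odd parts and comparing it with $\prod_{j}(1-q^j)^{-1}=\sum_n p(n)q^n$ modulo~$2$; but the Frobenius computation above is shorter and sidesteps any partition bijection.
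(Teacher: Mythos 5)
Your proof is correct. It shares the paper's overall shape: reduce $1+q^j$ to $1-q^j$ modulo~$2$ and then split $\prod_{j}(1-q^j)$ into its odd-index and even-index factors. Where the two arguments differ is in how they dispose of the even part. The paper uses the exact identity
$$
\frac{\prod_{j\in\N}(1-q^j)}{\prod_{j\in\N}(1-q^{2j})}=\prod_{j\in\N}\frac{1}{1+q^j},
$$
coming from the termwise factorization $1-q^{2j}=(1-q^j)(1+q^j)$, and then applies the congruence $1+q^j\equiv 1-q^j$ a second time at the end; you instead invoke the Frobenius congruence $f(q)^2\equiv f(q^2)\bmod 2$ to identify $\prod_{j}(1-q^{2j})$ with the square of $\prod_{j}(1-q^j)$ and cancel. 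Both middle steps are one-liners; yours stays entirely in characteristic~$2$, while the paper's exploits an exact Euler-type identity over $\Z$ and only reduces modulo~$2$ at the first and last steps. Your explicit remark that $\prod_{j}(1-q^j)$ is a unit in $(\Z/2\Z)[[q]]$ (constant term~$1$) correctly justifies the division, which the paper leaves implicit; the parenthetical appeal to $R_J(N)$ is unnecessary for this one-variable statement and can be dropped.
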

\begin{proof}
\begin{align*}
\prod_{\text{\rm odd }  j \in \N}  (1+q^j) 
&\equiv 
\dfrac{ \prod_{\text{\rm odd }  j \in \N}  (1-q^j) \,   \prod_{\text{\rm even }  j \in \N}  (1-q^j)   }{ \prod_{\text{\rm even }  j \in \N}  (1-q^j)   }
=
\dfrac{ \prod_{  j \in \N}  (1-q^j)  }{ \prod_{ j \in \N}  (1-q^{2j})   }  \\ 
&= 
\dfrac{1}{  \prod_{ j \in \N} (1+q^j)          }  
\equiv 
\dfrac{1}{  \prod_{ j \in \N} (1-q^j)          } .  
\end{align*}
\end{proof}
This Lemma shows us that 
$$
\sum_{n \ge 0} H(n) q^n 
\equiv 
\dfrac{1}{  \prod_{ j \in \N} (1-q^j)^{ 3+24\mu }    } 
\mod 2.  
$$
\begin{lm}
$$
\prod_{  j \in \N}  (1-q^j)^3 
\equiv 
\sum_{n=0}^{\infty}  q^{ \binom{n+1}{2} }
 \mod 2.   
$$
\end{lm}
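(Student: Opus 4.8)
The plan is to deduce the congruence from Jacobi's classical cube identity and then reduce its coefficients modulo two. First I would recall Jacobi's identity
$$
\prod_{j \in \N}(1-q^j)^3 = \sum_{n=0}^{\infty} (-1)^n (2n+1)\, q^{n(n+1)/2},
$$
a standard consequence of the Jacobi triple product (it is the expansion of $\eta^3$, obtained by differentiating the product form of $\vartheta$ in its elliptic variable and specializing at the origin). Since $n(n+1)/2 = \binom{n+1}{2}$, the exponents occurring on the right are exactly the triangular numbers $\binom{n+1}{2}$ for $n \ge 0$, which are pairwise distinct, matching the powers of $q$ appearing in the claimed right-hand side.

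Second, I would reduce modulo two. For every $n \ge 0$ the coefficient $(-1)^n(2n+1)$ is odd, hence congruent to $1 \bmod 2$. Because the exponents $\binom{n+1}{2}$ are distinct, no two terms of Jacobi's identity contribute to the same power of $q$, so the reduction can be performed term by term, yielding
$$
\prod_{j \in \N}(1-q^j)^3 \equiv \sum_{n=0}^{\infty} q^{\binom{n+1}{2}} \pmod 2,
$$
which is precisely the assertion of the Lemma.

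The only substantive ingredient is Jacobi's identity itself; once it is in hand the passage to characteristic two is immediate, precisely because every coefficient $2n+1$ is odd and the triangular exponents are distinct, so that no cancellation can intervene among coefficients of a common $q$-power. If one prefers a self-contained treatment over citing Jacobi, the entire difficulty migrates to re-deriving that identity from the triple product, which is classical; I would not expect it to be the true crux of this Lemma, whose role here is only to rewrite $\prod(1-q^j)^3$ modulo two as a theta-type series in preparation for the generating-function computation of $D_0$.
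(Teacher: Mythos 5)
Your proof is correct and follows essentially the same route as the paper, which also invokes the classical Euler/Jacobi identity $\eta(\tau)^3=\sum_{n\ge 0}(-1)^n(2n+1)q^{(2n+1)^2/8}$ (equivalent to your form after cancelling $q^{1/8}$) and reduces modulo two using the oddness of $2n+1$. No gaps.
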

\begin{proof}
The classical Euler identity or the Jacobi triple product identity 
gives us 
 $$
\eta(\tau)^3 = 
\sum_{n=0}^{\infty}
(-1)^n (2n+1) q^{ \frac{ (2n+1)^2 }{8} }.  
$$
Consequently we have 
$$
q^{\frac18} 
\prod_{  j \in \N}  (1-q^j)^3 
\equiv 
\sum_{n=0}^{\infty} q^{ \frac{ n(n+1)  }{2}  } q^{\frac18}  
\mod 2,
$$
which proves the lemma.  
\end{proof}
Now we may finish the derivation of the parity of $D_0$.  
\begin{align*}
D_0   &\equiv 
\Coeff\left( q^{\mu},   ( \sum_{\lambda \ge 0}  q^\binomial{\lambda+1}{2} ) ( \sum_{n \ge 0} H(n) q^n )     \right) \\
&\equiv 
\Coeff\left( q^{\mu},   (\prod_{  j \in \N}  (1-q^j)^3  ) ( \dfrac{1}{  \prod_{ j \in \N} (1-q^j)^{ 3+24\mu }    }  )     \right) \\ 
&=
\Coeff\left( q^{\mu},    \dfrac{1}{  \prod_{ j \in \N} (1-q^j)^{ 24\mu }    }      \right)  
=
\Coeff\left( q^{\mu},    \dfrac{1}{ ( \prod_{ j \in \N} (1-q^j)^{ 3 } )^{8\mu}   }      \right) .  
\end{align*}
Now 
$ ( \prod_{ j \in \N} (1-q^j)^{ 3 } )^{8\mu} \equiv  ( \sum_{n=0}^{\infty}  q^{ \binom{n+1}{2} }  )^{8\mu} $  
and squaring is a linear homomorphism modulo two so that 
$( \sum_{n=0}^{\infty}  q^{ \binom{n+1}{2} }  )^{8}  \equiv  \sum_{n=0}^{\infty}  q^{ 8\binom{n+1}{2} } $.  
Since we are in the case $\mu \ge 1$, let $\mu = 2^{\alpha} \delta$, where $\delta \in \N$ is odd.  
Then $\left( \sum_{n=0}^{\infty}  q^{ 8\binom{n+1}{2} } \right)^{\mu} 
\equiv  \left( \sum_{n=0}^{\infty}  q^{ 8\cdot 2^{\alpha} \binom{n+1}{2} } \right)^{\delta} $ 
is a monic polynomial in $q^{ 8\cdot 2^{\alpha}  }$ and, as such,  its reciprocal cannot have a term~$q^{ 2^{\alpha} \delta  }$, which is $q^{\mu}$.  Thus
$$
D_0 \equiv 
\Coeff \left(  q^{\mu},     \dfrac{1}{ \prod_{  j \in \N}  (1-q^j)^{24\mu}  }       \right) =0, 
$$
when $\mu \ge 1$ and this completes the proof of Theorem~\ref{antisym}.

\section{When do we have $\Grit(\phi)=\Borch\bigl(-(\phi|V_2)/\phi\bigr)$ ?} 

In order to complete the line of thought that began this research 
and to completely characterize the paramodular forms that are 
both Gritsenko lifts of theta blocks without theta denominator and Borcherds Products, 
it would suffice to prove the following conjecture.  

\begin{conjecture}
\label{wow}
Let $\phi\in J_{k,t}$ be a theta block without theta denominator and with vanishing order one in $q=e(\tau)$.  
 Then $\Grit(\phi)= \Borch(\psi)$ for $\psi=-\frac{\phi|V_2}\phi$. 
\end{conjecture}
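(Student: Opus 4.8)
The plan is to prove $\Grit(\phi)=\Borch(\psi)$ by a divisor comparison rather than by arguing directly from Fourier Jacobi data. By Theorem~\ref{borcherdsproductseverywhere}(4) both forms lie in $M_k(K(t))$ and share their first two Fourier Jacobi coefficients; since $\phi$ has vanishing order $v=1$ it has no constant term, so neither lift carries an Eisenstein piece and the difference $\Grit(\phi)-\Borch(\psi)$ has Fourier Jacobi expansion beginning at $\xi^{3t}$. The key claim I would establish is the effective divisor containment
$$\Div\left(\Grit(\phi)\right)\ge\Div\left(\Borch(\psi)\right)\quad\text{on }K(t)^{+}\backslash\Half_2.$$
Granting this, the quotient $\Grit(\phi)/\Borch(\psi)$ is a meromorphic paramodular form of weight~$0$ whose only possible poles, lying along $\Div(\Borch(\psi))$ since a Borcherds product is nonvanishing off its divisor, are cancelled by the containment. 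It is therefore a holomorphic weight-zero form, hence constant because $M_0(K(t))=\C$. Comparing leading Fourier Jacobi coefficients, each equal to the nonzero theta block~$\phi$ by Theorem~\ref{borcherdsproductseverywhere}(4), pins the constant to~$1$ and gives the identity.

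Everything thus reduces to the divisor containment, and both sides are governed by the same data, the $d_i$. The right-hand side is explicit from Theorem~\ref{BP}: the multiplicity of $\Borch(\psi)$ on $\Hum(T_o)$ is $\sum_{n\in\N}c(n^2n_om_o,nr_o;\psi)$, read off the singular support of $\psi$ (where $4tn-r^2\le 0$). Because $\psi=-(\phi|V_2)/\phi$ with $v=1$, Proposition~\ref{psiprop} shows the atoms $\zeta^{\pm d_i}$ of $\sing(\psi)$ contribute to $\HA_t(d_i^2,d_i)$ with multiplicity $\#\{j:d_i\mid d_j\}$, while deeper atoms $q^{n}\zeta^{\pm r}$ with $4tn=r^2$ (as in the $N=37$ example) record higher-torsion zeros of $\phi$. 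On the left-hand side I would invoke the Gritsenko lift divisor principle of \cite{GritNiku98PartII}: a zero of $\phi$ of multiplicity~$m$ along the $d$-torsion forces $\Grit(\phi)$ to vanish to order at least~$m$ along $\HA_t(d^2,d)$. The plan is to show this induced part of $\Div(\Grit(\phi))$ already accounts for every summand of $\Div(\Borch(\psi))$. The bookkeeping is naturally organized by the generalized valuation of \S4--5: the piecewise-quadratic function $\ord(\phi;x)=\min\langle\vJ(\phi),\smtwomat{1}xx{x^2}\rangle$ records, through its shape, precisely the torsion orders along which $\phi$ vanishes and with what multiplicity, which is the same combinatorial input that determines $\sing(\psi)$.

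The main obstacle is establishing the multiplicity match on \emph{every} Humbert surface, not merely on the reducible locus $\HA_t(1,1)$. The multiplicity of $\Grit(\phi)$ along a Humbert surface is a sum over the Hecke expansion $\sum_m(\phi|V_m)\,\xi^{mt}$, and one must rule out cancellation that would drop the vanishing order below the value predicted by the divisor principle; controlling this uniformly is exactly what a general proof lacks. For the regime $4\le k\le 11$ I would bypass this by using the known structure of the relevant spaces: dimension formulas and spanning results, such as $S_2(K(p))=\Grit\!\left(J_{2,p}^\cusp\right)$ of \cite{PoorYuenPara} in the weight-two prime-level case and the low-weight determinations cited in \S2, imply that a holomorphic form of weight~$k$ and level~$t$ whose first two Fourier Jacobi coefficients vanish must itself vanish, so that $\Grit(\phi)-\Borch(\psi)=0$ without the general multiplicity analysis. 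Extending the argument to all weights, and in particular to the open cases $k=2,3$ where weight-one vanishing leaves no slack, would require the uniform divisor principle above and constitutes the crux of the conjecture.
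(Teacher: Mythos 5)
Your reduction to the divisor containment $\Div\left(\Grit(\phi)\right)\ge\Div\left(\Borch(\psi)\right)$, followed by the weight-zero quotient and the Koecher principle, is sound as far as it goes and is exactly the argument the paper runs for the isolated examples of \S 2. The gap is that you have no mechanism for establishing the containment on the components of $\Div(\Borch(\psi))$ that do not arise from torsion zeros of $\phi$. Whenever $\sing(\psi)$ contains a term $c(n,r)q^n\zeta^r$ with $n>0$ and $4tn-r^2<0$, Theorem~\ref{BP} forces $\Borch(\psi)$ to vanish along $\HA_t(r^2-4tn,\,r)$, and $r^2-4tn$ need not be a perfect square; the Gritsenko-lift divisor principle (multiplicity $m$ on $d$-torsion gives multiplicity $m$ on $\HA_t(d^2,d)$) says nothing about such surfaces, so they are not ``higher-torsion zeros of $\phi$'' as you suggest. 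This is not hypothetical: the paper itself notes that the divisor argument fails for $\phi_{8,5}$ because of the terms $2q\zeta^{\pm5}$ (which produce $\HA_5(5,5)$), and for $\phi_{2,37}$ the term $q^6\zeta^{30}$ produces $\HA_{37}(12,30)$. Your fallback for $4\le k\le 11$ --- dimension formulas and spanning results --- does not close the gap either, because for each such weight $k=12-\ell$ the conjecture ranges over the infinite family of levels $t=\frac12(d_1^2+\cdots+d_\ell^2)$, while the cited spanning results (e.g.\ the determination of $S_k(K(5))^{\epsilon}$ by two Fourier Jacobi coefficients) are level-by-level facts available only for small $t$.

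The paper's actual proof for $4\le k\le 11$ rests on an idea absent from your proposal. An order-one theta block without theta denominator necessarily has $u=24-3\ell=3(8-\ell)$, hence is $\eta^{3(8-\ell)}\vartheta_{d_1}\cdots\vartheta_{d_\ell}$ of weight $k=12-\ell$, and the identity $\Grit(\phi)=\Borch(\psi)$ is obtained in Theorem~\ref{trivialthbl} by restricting, along $(z_1,\dots,z_\ell)=(d_1z,\dots,d_\ell z)$, the corresponding identity for the strongly reflective modular form $F_{D_\ell}$ on $\widetilde{\rm O}^+(U\oplus U\oplus D_\ell(-1))$ established in \cite{GritRefl}; the weight-$3$ family of Theorem~\ref{thquarks} comes the same way from $2U\oplus 3A_2(-1)$ via theta-quarks. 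A single many-variable identity on a larger orthogonal group thus handles all $d_i$ and all levels simultaneously, which is precisely what the Humbert-surface bookkeeping cannot do; the general conjecture, in particular weights $2$ and $3$ outside these families, remains open in the paper as well.
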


We know  in the above conjecture that 
$\Borch(\psi)$ and $\Grit(\phi)$ are both symmetric forms  in $M_k(K(t))$ and that they have identical
first and second Fourier Jacobi coefficients.  
The following theorem proves Conjecture~\ref{wow} for weights~$k$ 
satisfying $4 \le k \le 11$.  
Thus, item~{(4)} of Theorem~\ref{borcherdsproductseverywhere} may be strengthened for these weights 
to assert the complete equality of $\Borch(\psi)$ and $\Grit(\phi)$. 
The proof proceeds by demonstrating 
an exhaustive list  of examples.  

\begin{thm}\label{trivialthbl}
{\rm (Theta-products of order one.)\/}
Let $\ell \in \N$ be in the range  $1\le \ell\le 8$, and let $d_1,\ldots,d_\ell \in \N$
with $(d_1+\dots+d_\ell)\in 2\N$.
Then Conjecture 8.1 is true for the Jacobi form 
$$
\eta^{3(8-\ell)}\vartheta_{d_1}\cdot \ldots \cdot \vartheta_{d_\ell}\in 
J_{k, t}, {\text{ where }} k=12-\ell \text{ and }\ t=(d_1^2+\dots+d_\ell^2)/2.
$$ 
Additionally, 
this product is a Jacobi cusp form if $\ell<8$ or if $\ell=8$
and ${(d_1\cdot \ldots \cdot d_8)}/{d^8}$ is even 
where 
$d=(d_1,\dots, d_8)$ is the greatest common divisor of the $d_i$.
\end{thm}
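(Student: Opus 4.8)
The plan is to reduce the assertion to the vanishing of a single paramodular cusp form and then to close that vanishing by a determination result available in the weight range $4\le k\le 11$. First I would record that every theta block of this shape has vanishing order exactly one: since $u=3(8-\ell)$ we get $v=\tfrac1{24}(u+3\ell)=1$, while $k=\tfrac12(\ell+u)=12-\ell$ runs over $\{4,5,\dots,11\}$ as $\ell$ runs over $\{1,\dots,8\}$ and $t=\tfrac12\sum_i d_i^2$. Thus $\phi=\eta^{3(8-\ell)}\prod_i\vartheta_{d_i}$ falls under item~(4) of Theorem~\ref{borcherdsproductseverywhere}: with $\psi=-(\phi|V_2)/\phi$, both $\Grit(\phi)$ and $\Borch(\psi)$ lie in $M_k(K(t))$, carry the same character $\chi_F^{k}$ on $K(t)^+$ (so both are symmetric, consistent with item~(3) since $v=1$ is not an odd power of two), and share their first and second Fourier--Jacobi coefficients. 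It therefore suffices to prove that the difference $G=\Grit(\phi)-\Borch(\psi)$ vanishes, which is exactly Conjecture~\ref{wow} for these $\phi$.

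Next I would settle cuspidality, which also decides whether $G$ lies in $S_k$ or only in $M_k$. Using the order formula $\ord(\phi;x)=\tfrac{k}{12}+\tfrac12\sum_i \bar B_2(d_ix)$, the crude bound $\bar B_2\ge -\tfrac1{12}$ gives $\min_x\ord(\phi;x)\ge \tfrac{2k-\ell}{24}=\tfrac{u}{24}$, which is positive for $\ell<8$; hence $\phi$ is a Jacobi cusp form whenever $\ell<8$. For $\ell=8$ one has $k=4$ and the bound degenerates to $0$, the value $0$ being attained precisely when all $d_ix\equiv\tfrac12\bmod 1$ for a common $x$, that is when every $d_i/d$ is odd ($d=\gcd_i d_i$); equivalently $\min_x\ord(\phi;x)>0$ iff $(d_1\cdots d_8)/d^8$ is even, which is the stated criterion. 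In the cuspidal cases $\Grit(\phi)$ has no Eisenstein part, its index--zero Fourier--Jacobi coefficient $\delta(k)c(0,0;\phi)G_k$ vanishing because $c(0,0;\phi)=0$; and the product of Theorem~\ref{BP} begins at $\xi^{C}=\xi^{t}$, so $\Borch(\psi)$ likewise has vanishing index--zero coefficient. Hence $G\in S_k(K(t))$ with vanishing first and second Fourier--Jacobi coefficients.

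The heart of the argument is then to show that a symmetric (character $\chi_F^k$) paramodular cusp form of weight $4\le k\le 11$ whose first two Fourier--Jacobi coefficients vanish must be zero. Here I would use that $G$ is a $\mu_t$--eigenform: since the Fricke involution swaps the two outer diagonal variables, the relation $G|\mu_t=(-1)^kG$ turns the vanishing of $a(\smtwomat{n}{r/2}{r/2}{t\cdot 1};G)$ and $a(\smtwomat{n}{r/2}{r/2}{t\cdot 2};G)$ into the vanishing of $a(\smtwomat{n}{r/2}{r/2}{tm};G)$ whenever the complementary index is $\le 2$ as well. Consequently every $T=\smtwomat{n}{r/2}{r/2}{tm}$ in the support of $G$ has $n\ge 3$ and $m\ge 3$. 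I would then apply the paramodular maximum--modulus estimate of \cite{PY00}: if $\det(Y)^{k/2}|G(X+iY)|$ attains its maximum at $Y_o$, then $\tfrac{k}{4\pi}Y_o^{-1}$ lies in the convex hull of the support, which against the lower bounds $n,m\ge3$ forces $k$ past the relevant threshold, incompatible with $k<12$ unless $G=0$. Alternatively one may quote directly the Fourier--Jacobi determination results of \cite{IPY} and \cite{PoorYuenPara}, which for $k\le 11$ recover $S_k(K(t))$ from data of index at most two, organised as an exhaustive check over the eight weights $k=4,\dots,11$.

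I expect the genuine obstacle to be exactly this last step, and especially its uniformity in the level $t$: for fixed small $k$ the dimension of $S_k(K(t))$ grows with $t$, so ``two Fourier--Jacobi coefficients determine the form'' is a nontrivial injectivity statement rather than a dimension count. The cases carrying deep singular terms in $\sing(\psi)$ (monomials $q^n\zeta^r$ with $n>0$ and $4tn-r^2<0$, as in the $N=37$ example) are precisely those for which the naive divisor comparison $\Div(\Grit(\phi))\supseteq\Div(\Borch(\psi))$ is unavailable, so the weight bound is really needed there; the remaining cases, where the divisor of $\Borch(\psi)$ is carried by the torsion Humbert surfaces $\HA_t(d_i^2,d_i)$ coming from the theta factors, can instead be closed through the holomorphy of $\Grit(\phi)/\Borch(\psi)$ as in Section~2. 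A secondary nuisance is the boundary weight $k=4$ at $\ell=8$, where $\phi$ may fail to be cuspidal and the determination must be run in $M_4(K(t))$ rather than $S_4(K(t))$.
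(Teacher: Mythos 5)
Your reduction to item~(4) of Theorem~\ref{borcherdsproductseverywhere} and your cuspidality analysis via $\ord(\phi;x)=\tfrac{k}{12}+\tfrac12\sum_i\bar B_2(d_ix)$ are fine, but the heart of your argument --- that a symmetric paramodular cusp form of weight $4\le k\le 11$ and \emph{arbitrary} level $t$ whose first two Fourier--Jacobi coefficients vanish must be zero --- is precisely the step you yourself flag as the obstacle, and it is not established. The determination results of \cite{IPY} and \cite{PoorYuenPara} that you invoke are proved only for specific small levels (e.g.\ Table~3 of \cite{IPY} for $K(5)$); they are not uniform in $t$. Your fallback via the maximum--modulus estimate of \cite{PY00} also does not close the gap: the constraint $\tfrac{k}{4\pi}Y_o^{-1}\in\vsg(G)$ together with $n,m\ge 3$ on the support does not force a contradiction for $k<12$ independently of $t$, because the relevant fundamental domain (and hence the admissible $Y_o$) degenerates as $t\to\infty$ and $\dim S_k(K(t))$ grows with $t$. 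So as written the proposal proves the theorem only for those levels where an explicit Fourier--Jacobi determination or divisor argument happens to be available, which is the situation already covered in Section~2; it does not prove the infinite families.

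The paper's actual proof is entirely different and avoids any injectivity statement about Fourier--Jacobi expansions. It quotes \cite[Theorem 3.2]{GritRefl}, which establishes the identity between the additive lift and the Borcherds product of $\eta^{3(8-\ell)}\vartheta(\tau,z_1)\cdots\vartheta(\tau,z_\ell)$ as a strongly reflective modular form $F_{D_\ell}$ on $\widetilde{\mathrm O}^+(U\oplus U\oplus D_\ell(-1))$ in $\ell+2$ variables, and then \emph{restricts} this identity along $(z_1,\dots,z_\ell)=(d_1z,\dots,d_\ell z)$; since both the Gritsenko lift and the Borcherds product are compatible with this restriction (and with the injection $M_k(K(N))\to M_k(K(Nd^2))$ given by $F\mymat{\tau}{z}{z}{\omega}\mapsto F\mymat{\tau}{dz}{dz}{d^2\omega}$, used to handle $\ell=1$ via the quasi-pullback of $F_{D_8}$ along $D_7\subset D_8$), the one-variable identity follows for every choice of the $d_i$ at once. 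The cuspidality criterion for $\ell=8$ is read off from the Fourier expansion of the theta series as in \cite[Lemma 1.2]{GritHulek}. If you want to salvage your approach you would need to prove the uniform-in-$t$ determination statement, which is an open problem of independent difficulty rather than a known lemma.
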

\begin{proof}
This theorem is a direct corollary of \cite[Theorem 3.2]{GritRefl}
where it was proved  that  
$$
F_{D_8}=
{\rm Lift}\left(\vartheta(\tau,z_1)\cdots \vartheta(\tau,z_8) \right)
=\Borch\bigl(-\frac{(\vartheta(\tau,z_1)\cdots \vartheta(\tau,z_8))|_4V_2}
{\vartheta(\tau,z_1)\cdots \vartheta(\tau,z_8)}\bigr)
$$
for a strongly reflective modular form $F_{D_8}$ of weight $4$
in ten variables with respect to 
$\widetilde{\rm O}^+(U\oplus U\oplus D_8(-1))$. 
Similar identities were also obtained for the Jacobi forms
$\eta^{3(8-\ell)}\vartheta(\tau,z_1)\cdot \ldots \cdot \vartheta(\tau,z_\ell)$
with respect to the lattice $D_\ell$ for $\ell>1$. To prove the theorem
we have to take the restriction of the last identity for 
$(z_1,\dots,z_8)=(d_1z,\dots,d_8z)$.
The fact that the product is a Jacobi cusp form for $\ell=8$ follows from the Fourier
expansion of the Jacobi theta-series, see the proof of \cite[Lemma 1.2]{GritHulek}.

For $\ell=1$  our arguments also work.  
The orthogonal complement of  $D_7$ in $D_8$ is the lattice $D_1=\latt{4}$
of rank $1$. The (quasi) pullback of $F_{D_8}$ to the Siegel upper half plane, 
defined by the lattice $U\oplus U\oplus \latt{4}$, 
gives the reflective cusp form 
$G_{11}=\Grit(\phi_{11,2})\in S_{11}(K(2))$
(see Remark 2 after  \cite[Corollary 3.5]{GritRefl})
discussed in ${\mathbf N=2}$ of section~2 here. 
We conclude by using the fact that 
$F\mymat{\tau}zz{\omega} \mapsto F\mymat{\tau}{dz}{dz}{d^2\omega}$
defines an injective  map $M_k( K(N)) \to M_k( K(Nd^2) )$ that respects
both liftings.  
\end{proof}

Theorem~\ref{trivialthbl} shows that all examples considered in section~2 
for levels $N=1, \dots, 5$ are the first members of  {\it eight} infinite series 
of Gritsenko's liftings with Borcherds product structure of weight
$12-\ell$ for $\ell=1$, $\dots$, $8$.
For example, for $\ell=4$ we have  
$\Grit(\eta^{12}\vartheta_1^4)\in S_8(K(2))$,
$\Grit(\eta^{12}\vartheta_1^2\vartheta_2^2)\in S_8(K(5))$,
$\Grit(\eta^{12}\vartheta_1^3\vartheta_3)\in S_8(K(6))$ and 
$$
\Grit(\eta^{12}\vartheta_{d_1}\dots\vartheta_{d_4})
\in S_8(K((d_1^2+\dots+d_4^2)/2)).
$$

We can also construct a {\it ninth\/} infinite series of such modular forms of weight $3$. 
Let us take the simplest non-trivial  theta blocks, i.e., with 
a single $\eta$ factor in the denominator. These are  the so-called theta-quarks 
(see \cite{GSZ} and \cite[Corollary 3.9]{CG2}); 
for $a,b \in \N$, set 
$$
\theta_{a,b}=\frac{\theta_a\theta_b\theta_{a+b}}{\eta}\in J_{1,a^2+ab+b^2}(\chi_3),   
\qquad \chi_3=\epsilon_\eta^8, 
\qquad \chi_3^3=1.
$$
The theta-quark $\theta_{a,b}$ is a Jacobi cusp form if $a\not\equiv b \mod 3$. 
The following theorem  is a direct corollary of  \cite[Theorem 4.2]{GritRefl}
about the strongly reflective modular form of weight $3$ with respect to 
O$^+(2U\oplus 3A_2(-1))$.  
\begin{thm}\label{thquarks}  
For $a_1,b_1,a_2,b_2,a_3,b_3 \in \N$, we have 
$$
\Grit(\theta_{a_1,b_1}\theta_{a_2,b_2}\theta_{a_3,b_3})=
\Borch(\psi)\in M_3(K(t))
$$
where $t=\sum_{i=1}^3 (a_i^2+a_ib_i+b_i^2)$ and 
 $\psi=-\dfrac{(\theta_{a_1,b_1}\theta_{a_2,b_2}\theta_{a_3,b_3})|V_2}
{\theta_{a_1,b_1}\theta_{a_2,b_2}\theta_{a_3,b_3}}$.
\end{thm}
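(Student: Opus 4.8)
The plan is to obtain Theorem~\ref{thquarks} as a restriction of the master identity of \cite[Theorem 4.2]{GritRefl}, in exactly the way Theorem~\ref{trivialthbl} was deduced from the $D_8$ identity. The point of departure is the strongly reflective modular form $F_3$ of weight~$3$ for $\widetilde{O}^+(2U\oplus 3A_2(-1))$, which is simultaneously an additive (Gritsenko) lift and a Borcherds product of a single Jacobi form $\Phi$ in the eight abelian variables attached to the three $A_2$ summands. For one copy of $A_2$, with abelian variables $(z_1,z_2)$, the relevant $A_2$-theta block is $\vartheta(\tau,z_1)\vartheta(\tau,z_2)\vartheta(\tau,z_1+z_2)/\eta$, whose three theta factors correspond to the three positive roots of $A_2$ and whose index is the $A_2$-norm $a^2+ab+b^2$ in the Jacobi normalization; the form $\Phi$ is the product of the three such blocks, and in the orthogonal setting the identity reads $\Grit(\Phi)=\Borch\bigl(-(\Phi|V_2)/\Phi\bigr)=F_3$.

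First I would set up the specialization. Since $a^2+ab+b^2$ is the $A_2$-norm, the vector with components $(a_i,b_i)$ in the $i$-th $A_2(-1)$ summand has norm $-2(a_i^2+a_ib_i+b_i^2)$, so the rank-one sublattice it generates across the three summands has norm $-2t$. This yields a norm-preserving embedding $\langle-2t\rangle\hookrightarrow 3A_2(-1)$ and hence $U\oplus U\oplus\langle-2t\rangle\hookrightarrow 2U\oplus 3A_2(-1)$, embedding the paramodular domain $\Half_2$, with $PK(t)\cong P\widetilde{O}^+(U\oplus U\oplus\langle-2t\rangle)$, into the eight-dimensional orthogonal domain. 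On abelian coordinates this embedding is the substitution $(z_1^{(i)},z_2^{(i)})=(a_iz,\,b_iz)$ for $i=1,2,3$, under which the $i$-th $A_2$-theta block becomes $\vartheta(\tau,a_iz)\vartheta(\tau,b_iz)\vartheta(\tau,(a_i+b_i)z)/\eta=\theta_{a_i,b_i}(\tau,z)$. Thus $\Phi$ restricts to the theta-quark product $\theta_{a_1,b_1}\theta_{a_2,b_2}\theta_{a_3,b_3}\in J_{3,t}$, which has trivial character because $\chi_3^3=1$, and has index $t=\sum_{i=1}^3(a_i^2+a_ib_i+b_i^2)$.

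Next I would track both lift structures through the restriction. The Gritsenko lift commutes with this restriction, just as in the concluding step of the proof of Theorem~\ref{trivialthbl}: the additive lift of the restricted Jacobi form equals the restriction of the additive lift, so $\Grit(\Phi)$ restricts to $\Grit(\theta_{a_1,b_1}\theta_{a_2,b_2}\theta_{a_3,b_3})$. Likewise the Borcherds product restricts to the paramodular Borcherds product $\Borch(\psi)$ of the restricted weight-zero form $\psi=-(\theta_{a_1,b_1}\theta_{a_2,b_2}\theta_{a_3,b_3}|V_2)/(\theta_{a_1,b_1}\theta_{a_2,b_2}\theta_{a_3,b_3})$. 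Since the weight~$3$ is preserved under restriction and the leading exponents match under the substitution, both restrictions land in $M_3(K(t))$, and the master identity restricts to the claimed equality.

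The main obstacle I anticipate is the delicate point already flagged in Theorem~\ref{trivialthbl}: whether the restriction is a genuine pullback or must be taken as a \emph{quasi-pullback}. The divisor of $F_3$ is a union of rational quadratic (reflective) divisors, and if the embedded paramodular domain lies inside this divisor then $F_3$ vanishes there, so one must divide by the appropriate power of the linear form cutting it out before restricting, and then verify that the quasi-pullback retains weight~$3$ and both lift structures. I would resolve this by observing that $\theta_{a_1,b_1}\theta_{a_2,b_2}\theta_{a_3,b_3}$ is a nonzero Jacobi form of index~$t$, so its additive lift is a nonzero weight-$3$ paramodular form; this nonvanishing pins down the restriction as a quasi-pullback of the correct order, and the combinatorial matching of the reflective divisor of $F_3$ against the singular part of $\psi$ confirms that $\Borch(\psi)$ is holomorphic with the divisor prescribed by Theorem~\ref{BP}. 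Finally, since the quark product has vanishing order $v=\tfrac{1}{24}(-3+27)=1$, the $v=1$ conclusions of Theorem~\ref{borcherdsproductseverywhere} already force $\Borch(\psi)$ and the Gritsenko lift of the product to agree in their first two Fourier--Jacobi coefficients, so the restricted identity is exactly what is needed to upgrade this coincidence to the full equality asserted in Theorem~\ref{thquarks}.
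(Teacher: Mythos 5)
Your proposal is correct and follows essentially the same route as the paper: the paper also obtains Theorem~\ref{thquarks} as a direct corollary of \cite[Theorem 4.2]{GritRefl} on the strongly reflective weight-$3$ form for $\mathrm{O}^+(2U\oplus 3A_2(-1))$, restricting along the substitution $(z_1^{(i)},z_2^{(i)})=(a_iz,b_iz)$ exactly as in the $D_8$ argument for Theorem~\ref{trivialthbl}. Your additional attention to the quasi-pullback issue and the check $v=1$ are consistent with, and slightly more detailed than, what the paper records.
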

This example is very interesting because a paramodular cusp form of weight $3$
with respect to $K(t)$ induces a canonical differential form on the moduli space
of $(1,t)$-polarized abelian surfaces, see \cite{Grit2}.   
Therefore the divisor of the modular form 
in this example gives the class of the canonical divisor of the corresponding Siegel modular 
$3$-fold.

In a subsequent publication, we hope to show that the identity proven as the last example of section~2, 
$\Grit(\phi_{2,37})=\Borch(\psi_{2,37})$,
is also a member of an infinite family of identities for Siegel paramodular 
forms of weight $2$.



\begin{thebibliography}{22}\medskip


\bibitem{B} Borcherds, R. E.:
{\it Automorphic forms with singularities on Grassmannians},
Inventiones Math. {\bf 132} (1998), 491--562.

\bibitem{CG}
Cl\'ery, F., Gritsenko, V.:
{\it Siegel modular forms of genus $2$ with the simplest divisor},
Proc. London Math. Soc. {\bf 102} (2011),  1024--1052.

\bibitem{CG2}
Cl\'ery, F., Gritsenko, V.:
{\it Modular forms of orthogonal type and Jacobi theta-series},
Abhandlungen aus dem Mathematischen Seminar der Universit\"at Hamburg
(2013) 1--33; DOI 10.1007/s12188-013-0080-4. 


\bibitem{EZ}
Eichler M.  and Zagier D., 
{\it The theory of Jacobi forms, \/} 
Progress in Mathematics 
55, 
Birkh\"auser Verlag, 
Berlin 1985. 

\bibitem{GritArith} Gritsenko, V.:
{\it Arithmetical lifting and its applications}, 
Number Theory.  Proceedings of Paris Seminar 1992-93, 
Cambridge Univ. Press, 1995, 103--126.  

\bibitem{Grit2} Gritsenko, V.:
{\it Irrationality of the moduli spaces of polarized abelian surfaces},
The International Mathematics Research Notices {\bf 6} (1994), 235--243. 

\bibitem{GritRefl} Gritsenko, V.:
{\it Reflective modular forms in algebraic geometry},
 {\tt arXiv:1005.3753} (2010), 28 pp.

\bibitem{Grit24} Gritsenko, V.:
{\it 24 Faces of the Borcherds Modular form $\Phi_{12}$}, 
arXiv:1203.6503v1 (2012), 14pp.

\bibitem{GritHulek0} Gritsenko, V., Hulek, K.:
{\it Minimal Siegel modular  threefolds},
Math. Proc. of the Cambridge Philos. Society
{\bf 123} (1998), 461--485.

\bibitem{GritHulek} Gritsenko, V., Hulek, K.:
{\it Commutator Coverings of Siegel Threefolds}, 
Duke Math. Jour. {\bf 94} (1998), 509--542. 
 
\bibitem{GN1} Gritsenko,V., Nikulin, V.:
{\it Siegel automorphic form correction of some Lorentzi\-an
Kac--Moody Lie algebras},
Amer. J. Math. {\bf 119} (1997), 181--224. 

\bibitem{GN2} Gritsenko,V., Nikulin, V.:
{\it The Igusa modular forms and ``the simplest''
Lorentzian Kac--Moody algebras}
Matem. Sbornik, {\bf 187} (1996), 1601--1643.

\bibitem{GritNiku98PartII} Gritsenko, V., Nikulin, V.:
{\it Automorphic Forms and Lorentzian Kac-Moody Algebras, Part II}, 
 International J. Math.
{\bf 9} (1998), 201--275.

\bibitem{GN3} Gritsenko, V., Nikulin, V.: 
{\it On classification of Lorentzian Kac--Moody
algebras.}
Russian Math. Survey  {\bf 57} (2002), 921--981.


\bibitem{GSZ} Gritsenko, V., Skoruppa, N. P., Zagier, D.:
{\it Theta Blocks}, Manuscript (2010).  

\bibitem{HeimMurase} Heim, B., Murase, A.:
{\it A characterization of holomorphic Borcherds lifts by symmetries}, 
(preprint: March 6, 2012)  

\bibitem{Ibuk97} Ibukiyama, T., Onodera, F.:
{\it On the Graded Ring of Modular Forms of the Siegel Paramodular Group of Level~2}, 
Abh. Math. Sem. Univ. Hamburg 
{\bf 67} (1997), 297--305. 

\bibitem{Ibuk85} Ibukiyama, T.:
{\it On Relations of Dimensions of Automorphic Forms of Sp(2,R) and Its Compact Twist Sp(2) (I)}, 
Adv. Stud. Pure Math.  {\bf 7}  (1985) 
Automorphic Forms and Number Theory,  7--30.  

\bibitem{IPY} Ibukiyama, T., Poor, C.,  Yuen, D.:
{\it Jacobi forms that characterize paramodular forms}, 
Abh.  Math. Semin. Univ. Hamburg. 
{\bf 83} (No. 1) (2013), 111-128



\bibitem{PY00} Poor, C.,  Yuen, D.:
{\it Linear dependence among Siegel modular forms}, 
Math. Ann.
{\bf 318} (2000), 205--234. 

\bibitem{PoorYuenPara} Poor, C.,  Yuen, D.:
{\it Paramodular Cusp Forms},
ArXiv: 0912.0049v1, (2009) 61 pp.  (To appear in Math. Comp.)
 
 \bibitem{PY13} Poor, C.,  Yuen, D.:
{\it The Cusp Structure of the Paramodular Groups for Degree Two}, 
J. Korean Math. Soc. 
{\bf 50} (No. 2) (2013), 445--464. 

\bibitem{PSY} Poor, C., Shurman, J., Yuen, D.:
{\it Computations with Siegel Modular Forms and an Introduction to the Geometry of Numbers},  (in progress)



\bibitem{Rock} Rockafellar, R. T.:
{\it Convex Analysis}, 
 Princeton Landmarks in Mathematics, 
(1970), 1--451.

\end{thebibliography}
\end{document}